\documentclass[11pt]{article}
\usepackage{amsfonts}

\usepackage{graphics}
\usepackage{indentfirst}
\usepackage{cite}
\usepackage{latexsym}
\usepackage{amsmath,amsthm}
\usepackage{amssymb}
\usepackage{amscd}
\usepackage{mathrsfs}

\usepackage{color}

\newtheorem{theorem}{Theorem}[section]
\newtheorem{remark}{Remark}[section]

\newtheorem{lemma}[theorem]{Lemma}

\newcommand{\n}{\rho}

\renewcommand{\div}{ {\rm div }  }

\newcommand{\na}{\nabla }

\newcommand{\bt}{\begin{theorem}}
\newcommand{\bl}{\begin{lemma}}
\newcommand{\el}{\end{lemma}}
\newcommand{\et}{\end{theorem}}
\newcommand{\ga}{\gamma}

\newcommand{\OM}{\Omega}

\newcommand{\de}{\delta}

\newcommand{\la}{\label}
\newcommand{\si}{\sigma}

\newcommand{\bn}{\begin{eqnarray}}
\newcommand{\en}{\end{eqnarray}}
\newcommand{\bnn}{\begin{eqnarray*}}
\newcommand{\enn}{\end{eqnarray*}}

\newcommand{\bnnn}{\begin{eqnarray*}}
\newcommand{\ennn}{\end{eqnarray*}}
\newcommand{\ben}{\begin{enumerate}}
\newcommand{\een}{\end{enumerate}}

\newcommand{\ba}{\begin{aligned}}
\newcommand{\ea}{\end{aligned}}
\newcommand{\be}{\begin{equation}}
\newcommand{\ee}{\end{equation}}

\def\p{\partial}
\def\norm[#1]#2{\|#2\|_{#1}}

\def\lam{\lambda}
\def\ep{\varepsilon}

\def\rr{\mathbb{R}^2}

\makeatletter      % '@' is now a normal "letter" for TeX
\@addtoreset{equation}{section}
\makeatother       % '@' is restored as a "non-letter" character for TeX

\title{Global Well-Posedness of Classical Solutions to the Multi-Dimensional Degenerate Compressible Navier-Stokes Equations with Large Spherically Symmetric Initial Data}

\date{}

\author{$\text{Qinghao L{\small EI}}^{a,b}\thanks{Email addresses:  leiqinghao22@mails.ucas.ac.cn (Q. H. Lei)}$ \\
a. School of Mathematical Sciences,\\ University of Chinese Academy of Sciences,
Beijing 100049, P. R. China;\\
b. Institute of Applied Mathematics,\\ Academy of Mathematics and Systems Science, \\
Chinese Academy of Sciences, Beijing 100190, P. R. China}

\begin{document}
\maketitle

\begin{abstract}
This paper is concerned with the global existence and uniqueness of classical solutions to the barotropic compressible Navier-Stokes equations with degenerate viscosity coefficients in three-dimensional bounded domains or in the whole space $\mathbb{R}^N$ $(N=2,3)$ with non-vacuum far-field density.
Specifically, we assume that the shear viscosity coefficient $\mu(\rho)=\rho^\alpha$ and the bulk viscosity coefficient $\lambda(\rho)=(\alpha-1)\rho^\alpha$, which satisfy the BD entropy relation.
For arbitrarily large spherically symmetric initial data, we establish the global existence and uniqueness of spherically symmetric classical solutions under the following conditions: for $N=2$, $\alpha \in (0.54369,1)$ and $\gamma \in (1,\infty)$; for $N=3$ (both bounded domains and the whole space), $\alpha \in (0.67661,1)$ and $\gamma \in (1,6\alpha-3)$.
In the two-dimensional case $\mathbb{R}^2$, the restriction on $\alpha$ can be further relaxed to $\alpha \in (9-6\sqrt{2},1)$ provided that the initial data satisfy additional weighted integrability conditions.
Moreover, we show that the solution will not exhibit vacuum in any finite time provided that no vacuum is present initially. \\
\par\textbf{Keywords:} Compressible Navier-Stokes equations; Degenerate viscosities; Global classical solutions; Large initial data; Spherical symmetry
\par\textbf{2020 Mathematics Subject Classification:} 35Q30, 35B65, 76N10.
\end{abstract}

\section{Introduction and main results}
We consider the multi-dimensional barotropic compressible
Navier-Stokes equations with density-dependent viscosity coefficients, which read as follows:
\be\ba\la{ns}
\begin{cases}
\rho_t + \div(\rho \mathbf{u}) = 0, \\
(\n \mathbf{u})_t + \div(\n \mathbf{u} \otimes \mathbf{u}) - \div( \mu(\n) \mathbb{D}\mathbf{u}) - \na( \lambda(\n) \div \mathbf{u} ) + \na P = 0,
\end{cases}
\ea\ee
where $t \ge 0$ is time, $x \in \OM \subset \mathbb{R}^N$ is the spatial coordinate,
$\n=\n(x,t)$ and $\mathbf{u}(x,t)=(u^1(x,t),\dots,u^N(x,t))$ represent the 
density and velocity of the compressible flow, respectively.
The pressure $P$ is given by
\be\la{i1}
P=a \rho^\ga,
\ee
with constants $a>0$ and $\ga > 1$.
Without loss of generality, we assume that $a=1$.
The viscosity coefficients $\mu(\n)$ and $\lam(\n)$ satisfy the physical restrictions
\be\la{i2}
\mu(\n) > 0, \quad \mu(\n) + N \lam(\n) \ge 0,
\ee
and the deformation tensor $\mathbb{D}\mathbf{u}$ is defined by
\be\nonumber
\mathbb{D}\mathbf{u} = \frac{1}{2} ( \na \mathbf{u} + \na \mathbf{u}^T ).
\ee
The system is supplemented with spherically symmetric initial data
\be\la{cz}
\n(x,0)=\n_0(x) = \n_0(r), \quad \mathbf{u}(x,0)=u_0(r) \frac{x}{r}, \quad x\in \OM,
\ee
with $r=|x|$.

In this paper, we mainly investigate two types of boundary conditions:

(1) Cauchy problem: $\OM = \mathbb{R}^N$ $(N =2,3)$ and $(\n,\mathbf{u})$ satisfies the far-field behavior
\be\la{qkjbjtj}\ba
(\n,\mathbf{u})(x,t) \to (\tilde{\n},0) \ \text{ as } |x| \to \infty, \  t>0,
\ea\ee
where $\tilde{\n} > 0$ is a constant.

(2) Three-dimensional bounded domain with Dirichlet boundary condition:
\be\la{yjybjtj}
\OM = B_R \triangleq \{ x \in \mathbb{R}^3 \mid |x| \le R \}, \quad \mathbf{u} = 0 \ \text{ on } \ \p \OM.
\ee

There is a vast body of literature on the global existence of solutions to (\ref{ns}) when both viscosity coefficients $\mu(\n)$ and $\lambda(\n)$ are constants.
The one-dimensional problem has been extensively studied; see \cite{H4,Ka,KN,KS,S1,S2} and the references therein.
For the multi-dimensional case, the local existence and uniqueness of classical solutions were established in \cite{N,S} under the assumption of strictly positive initial density.
For strong solutions, further results were obtained in \cite{CCK,CK,CK2,SS,LLL}, where the initial density is not required to be strictly positive and may vanish on open sets.
The global classical solutions were first obtained by Matsumura-Nishida \cite{MN1} for initial data close to a non-vacuum equilibrium in some Sobolev space $H^s$.
Subsequently, Hoff \cite{H1,H2,H3} investigated the problem with discontinuous initial data and developed a new type of a priori estimates on the material derivative $\dot{\mathbf{u}}$.
For large initial data, a major breakthrough in the global existence theory of weak solutions is due to Lions \cite{L2}, who established the global existence of finite-energy weak solutions with the pressure $P = a\n^\ga \ (a>0,\ga>1)$, provided that $\ga$ is suitably large; for instance, $\ga \ge \frac{9}{5}$ in the three-dimensional case.
This result was later improved by Feireisl-Novotn\'y-Petzeltov\'a \cite{FNP} to $\ga>\frac{3}{2}$, and by Jiang-Zhang \cite{JZ} to $\ga>1$ for spherically symmetric solutions.
For initial data containing vacuum, Huang-Li-Xin \cite{HLX2} proved the global existence and uniqueness of classical solutions to the three-dimensional Cauchy problem for smooth initial data with small energy but possibly large oscillations.
Later, Li-Xin \cite{LX2} further derived some a priori decay rates for both the pressure and the spatial gradient of velocity under the assumption that the initial total energy is sufficiently small, and established the global existence and large-time asymptotic behavior of strong and classical solutions to the Cauchy problem in two and three dimensions with far-fields vacuum.
More recently, Cai-Li \cite{CL} proved the global existence and exponential decay of classical solutions in three-dimensional bounded domains with slip boundary conditions, provided that the initial total energy is suitably small.

It is noteworthy that the case where the viscosity coefficients depend on the density and degenerate at vacuum has received extensive attention in recent years.
Indeed, as pointed out by Liu-Xin-Yang \cite{LXY}, in the derivation of the compressible Navier-Stokes equations from the Boltzmann equation via the Chapman-Enskog expansion (see \cite{CC}), the viscosity depends on the temperature, which translates into a dependence on the density for barotropic flows.
When $\mu$ is a positive constant and $\lambda(\rho) = b \n^\beta$ with $b>0$ and $\beta>3$, Vaigant-Kazhikhov \cite{VK} first proved that the two-dimensional system (\ref{ns}) admits a unique global strong solution for large initial data with density away from vacuum in rectangular domains with slip boundary conditions.
Later, in periodic domains, Jiu-Wang-Xin \cite{JWX1} generalized this result by removing the condition that the initial density be away from vacuum.
More recently, in two-dimensional periodic domains or the whole space, Huang-Li \cite{HL2,HL3} (see also \cite{JWX2}) relaxed the condition $\beta>3$ to $\beta>\frac{4}{3}$.
This problem was further studied by Fan-Li-Li \cite{FLL} in general two-dimensional bounded simply connected domains with Navier-slip boundary conditions, where they established the global existence of strong and weak solutions under the assumption that $\beta>\frac{4}{3}$.
Moreover, Huang-Su-Yan-Yu \cite{HSYY} proved the global existence of radially symmetric strong solutions in two-dimensional balls under the condition of $\beta>1$.

For viscosity coefficients depending on the density and satisfying (\ref{i2}) and
\be\la{bde}
\lambda(\rho) = \mu'(\rho) \rho - \mu(\rho),
\ee
a remarkable framework was introduced by Bresch-Desjardins \cite{BD1,BD2,BDL}, which provides additional information on the gradient of the density.
Subsequently, by deriving a new a priori estimate for smooth approximate solutions, Mellet-Vasseur \cite{MV} studied the stability of (\ref{ns}).
For spherically symmetric initial data, Guo-Jiu-Xin \cite{GJX} established the global existence of weak solutions with large initial data and vacuum.
Furthermore, Guo-Li-Xin \cite{GLX} extended this result to the free boundary problem and investigated the Lagrangian structure and dynamics.
For general initial data, Li-Xin \cite{LX1} and Vasseur-Yu \cite{VY} independently established the global existence of weak solutions with large initial data and vacuum, while Bresch-Vasseur-Yu \cite{BVY} extended these results by considering more general viscosity coefficients.
In the exterior domain of a ball in $\mathbb{R}^{N} \ (N=2,3)$, Cao-Li-Zhu \cite{CLZ} proved the global existence and uniqueness of spherically symmetric classical solutions for large spherically symmetric initial data with far-field vacuum.

More recently, for spherically symmetric initial data, Zhang \cite{Z} first established the global existence and uniqueness of classical solutions in a ball in $\mathbb{R}^{N}$ $(N=2,3)$.
Later, Huang-Meng-Zhang \cite{HMZ} generalized this result and established the large-time asymptotic behavior of solutions.
At the same time, for the Cauchy problem in two and three dimensions, Chen-Zhang-Zhu \cite{CZZ} considered the case $\mu(\n) = \n$ and $\lambda(\n)=0$, which covers the Saint-Venant model for shallow water flows.
They established the global existence and uniqueness of strong and classical solutions for large spherically symmetric initial data with vacuum or non-vacuum far-field density.
Building upon the framework developed in \cite{Z,HMZ}, the main aim of this paper is to establish the global existence and uniqueness of spherically symmetric classical solutions to (\ref{ns})--(\ref{cz}) with the boundary conditions (\ref{qkjbjtj}) or (\ref{yjybjtj}) for large initial data.
Specifically, we consider viscosity coefficients of the form
\be\la{nxxs}
\mu(\n) = \n^\alpha, \quad \lambda(\n) = (\alpha-1) \n^\alpha,
\ee
which satisfy the BD entropy structure (\ref{bde}).

We look for the spherically symmetric solutions of the form:
\be\la{qdc}
\n(x,t) = \n(r,t), \quad \mathbf{u}(x,t) = u(r,t) \frac{x}{r},
\ee
so that system (\ref{ns}) is transformed into
\be\la{nsqdc}\ba
\begin{cases}
\n_t + (\n u)_r + \frac{(N-1)}{r} \n u = 0, \\
\n(u_t + u u_r) + (\n^\ga)_r - \alpha \left( r^{-(N-1)} \n^\alpha (r^{N-1}u)_r \right)_r + \frac{N-1}{r}(\n^\alpha)_r u = 0.
\end{cases}
\ea\ee

Before stating the main results, we first explain the notations
and conventions used throughout this paper.
We denote
\be\ba\nonumber
\int f dx \triangleq \int_{\OM} f dx.
\ea\ee

For any $1\leq s \leq \infty$ and a positive integer $k$, we define the standard Lebesgue and Sobolev spaces as follows:
\be\ba\nonumber
\begin{cases}
L^s =L^s(\OM),\quad W^{k,s} =W^{k,s}(\OM),\quad H^k = W^{k,2}, \\
D^{k,s}=D^{k,s}(\OM)=\{ v\in L^1_\mathrm{loc}(\OM) \mid \nabla ^k v\in L^s(\OM) \}.
\end{cases}
\ea\ee
For the Cauchy problem, we define the potential energy density as
\be\la{pe}\ba
K(\n) \triangleq \n \int_{\tilde{\rho}}^{\n} \frac{P(s)-P(\tilde{\rho})}{s^2} ds.
\ea\ee
We introduce the effective velocity $w$:
\be\la{yxsd}\ba
w \triangleq u + \n^{-1} (\n^\alpha)_r,
\ea\ee
which, by (\ref{nsqdc}), implies that $w$ satisfies
\be\la{yxsdfc1}\ba
\n w_t + \n u w_r + (\n^\ga)_r = 0.
\ea\ee

We now state the first main result concerning the global existence of classical solutions in the whole space.
\begin{theorem}\la{th1}
Let $\OM = \mathbb{R}^N$ $(N=2,3)$.
Assume that $\alpha$ and $\ga$ satisfy
\be\la{th11}\ba
& 0.54369 < \alpha <1, \quad 1<\ga<\infty \quad \ \  \  \textnormal{ if } N=2, \\
& 0.67661 < \alpha<1, \quad 1<\ga<6\alpha-3 \ \textnormal{ if } N=3,
\ea\ee
and the initial data $(\n_0,\mathbf{u}_0)$ are spherically symmetric and satisfy
\be\la{th12}\ba
0 < \underline{\n_0} \le \n_0 \le \overline{\n_0}, \quad
\n_0 - \tilde{\n} \in H^3(\mathbb{R}^N), \quad \mathbf{u}_0 \in H^3(\mathbb{R}^N),
\ea\ee
where $\underline{\n_0}$ and $\overline{\n_0}$ are positive constants.
Then the problem \eqref{ns}--\eqref{qkjbjtj}, \eqref{nxxs} has a unique global spherically symmetric classical solution $(\n,\mathbf{u})$ in $\mathbb{R}^{N} \times (0,\infty)$ satisfying, for any $0 < T <\infty$,
\be\la{th13}\ba
(C(T))^{-1} \le \n(x,t) \le C(T), \quad \textnormal{ for all } (x,t)\in \mathbb{R}^N \times[0,T],
\ea\ee
and
\be\la{th14}\ba
\begin{cases}
\n - \tilde{\n} \in C([0,T];H^3(\mathbb{R}^N)), \n_t \in C([0,T];H^2(\mathbb{R}^N)), \\
\mathbf{u} \in C([0,T];H^3(\mathbb{R}^N)) \cap L^2([0,T];H^4(\mathbb{R}^N)), \\
\mathbf{u}_t \in C([0,T];H^1(\mathbb{R}^N)) \cap L^2([0,T];H^2(\mathbb{R}^N)), \mathbf{u}_{tt} \in L^2(\mathbb{R}^N \times (0,T)),
\\
\sqrt{t} \na^4 \mathbf{u}, \sqrt{t} \na^2 \mathbf{u}_{t}, \sqrt{t} \mathbf{u}_{tt} \in L^\infty((0,T);L^2(\mathbb{R}^N)), \sqrt{t} \na \mathbf{u}_{tt} \in L^2(\mathbb{R}^N \times (0,T)),
\end{cases}
\ea\ee
where $C(T)>0$ is a constant depending only on $T$, $\tilde{\n}$, $\alpha$, $\ga$, $\underline{\n_0}$, $\| \n_0 - \tilde{\n} \|_{H^3}$, and $\| \mathbf{u}_0 \|_{H^2}$.
\end{theorem}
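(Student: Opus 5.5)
The proof will follow the standard continuation argument. Since $\n_0$ is bounded away from zero and the data lie in $H^3$, the classical local theory (as in \cite{N,S}, adapted to density-dependent viscosities with $\n$ bounded below) gives a unique local classical solution. Uniqueness of spherically symmetric data shows that this solution stays spherically symmetric. It therefore suffices to establish, for every $T<\infty$, a priori bounds in the class \eqref{th14} that depend only on $T$ and the data. The decisive step, and the one I expect to be the main obstacle, is the two-sided pointwise bound \eqref{th13} on the density. Once \eqref{th13} holds, the system is uniformly parabolic in $\mathbf{u}$ and the higher-order estimates are routine.

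\textbf{Lower-order estimates.} I would first record the basic energy identity, which controls $\int(\frac12\n u^2+K(\n))dx$ and $\int_0^T\!\int \n^\alpha|\mathbb{D}\mathbf{u}|^2+(\alpha-1)\n^\alpha(\div\mathbf{u})^2$. Next I would use the BD entropy, which by \eqref{bde} is the energy identity for the effective velocity \eqref{yxsd}. This bounds $\int \n|w|^2$ and hence $\int \n^{2\alpha-3}|\n_r|^2 r^{N-1}dr$. It also gives the dissipation $\int_0^T\!\int \gamma\n^{\alpha+\gamma-3}|\n_r|^2$.

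\textbf{Upper and lower density bounds.} For the upper bound I would work away from the origin and near it separately.
\begin{itemize}
\item Away from the origin, I would apply a one-dimensional Sobolev argument in $r$ to $\n^{\alpha-1/2}$, or to a suitable power, combined with the BD bound. This gives $\n\le C$ for $r\ge 1$.
\item Near the origin, I would follow \cite{Z,HMZ}. The transport equation \eqref{yxsdfc1} gives $w_t+uw_r=-\ga\n^{\ga-2}\n_r$. Multiplying by $\n|w|^{2m-2}w$ should produce uniform $L^{2m}(\n\,dx)$ bounds for $w$, and those estimates would then be propagated to higher powers.
\item Combining these with the radial structure $\n^{\alpha-1}\n_r=\alpha^{-1}(w-u)$ gives a pointwise bound on $\n^{\alpha-1}$ in terms of weighted integrals of $w$ and $u$ over $[0,r]$.
\end{itemize}
The ranges $\alpha>0.54369$ in 2D and $\alpha>0.67661$ with $\ga<6\alpha-3$ in 3D should come from closing this step. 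There I would bound $\int_0^r |u|\n^{\alpha-1}dr$ via Hölder, using the energy and the higher $L^{2m}$ bounds of $u$. The latter are obtained from the momentum equation multiplied by $|u|^{2m-2}u$, in the spirit of Mellet--Vasseur. The Hölder exponents must be compatible with the weight $r^{N-1}$, and the numeric thresholds are roots of the resulting polynomial inequalities in $\alpha$.

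For the lower bound I would use that $\n^{\alpha-1}$ with $\alpha<1$ grows when $\n\to0$. The BD estimate controls $\|\na\n^{\alpha-1/2}\|_{L^2}$, and together with the $L^{2m}$ bound on $w$ and the upper bound on $\n$, a similar one-dimensional integration gives $\n^{-1}\le C(T)$, both near the origin and at infinity. The far-field constant $\tilde\n>0$ is what allows integrating from infinity.

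\textbf{Higher-order estimates.} With \eqref{th13} in hand, I would run the Hoff-type estimates on $\dot{\mathbf u}$ in the symmetric setting. These give $\na\mathbf u\in L^\infty L^2$ and $\sqrt{\n}\dot{\mathbf u}\in L^\infty L^2$, then $\|\na\mathbf u\|_{L^1_tL^\infty}$ and hence $\|\na\n\|_{L^2\cap L^q}$, via elliptic estimates for the Lamé-type operator with coefficients $\n^\alpha$. Next come $H^2$ bounds, then $H^3$ bounds, the time-weighted estimates in \eqref{th14}, and the $\sqrt t$ bounds for $\na^4\mathbf u$ and $\mathbf u_{tt}$ by differentiating the equation in time. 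All constants depend only on the listed quantities. The uniqueness of the classical solution follows by a standard energy argument on the difference of two solutions, using the bounds just proved. The continuation argument then completes the proof of Theorem \ref{th1}.
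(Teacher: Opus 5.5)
Your overall architecture matches the paper: energy and BD entropy, Mellet--Vasseur-type $L^k$ bounds for $u$ and then for the effective velocity $w$ with weight $r^{N-1}$, one-dimensional integration in $r$, higher-order estimates, continuation. However, the step you yourself call decisive, the two-sided density bound, is not closed by what you describe.

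For the upper bound, note that $w-u=\alpha\rho^{\alpha-2}\rho_r$. Hence $\partial_r\rho^{\alpha-1}=\frac{\alpha-1}{\alpha}(w-u)$ and $\partial_r\rho^{\alpha}=\rho(w-u)$, and a bound on $\rho^{\alpha-1}$ is a \emph{lower} bound on $\rho$. To bound $\rho$ from above near $r=0$ you must integrate $\partial_r\rho^{s}$ with $s>0$. After H\"older against $\int\rho^{k(\alpha-2)+1}|\partial_r\rho|^k r^{N-1}dr$, a term $\int_0^1\rho^{\beta}r^{-\frac{N-1}{k-1}}dr$ with $\beta>0$ remains. This term, and the pressure contribution $\int_0^1\rho^{k(\gamma-\alpha)+\alpha}r^{k+N-3}dr$ in the $L^k$ estimate for $u$, can only be controlled through BD-derived weighted bounds that your plan omits: $\|\rho r^{\xi}\|_{L^\infty(0,1)}\le C(\xi)$ for $N=2$ and $\|\rho r^{\frac{1+\xi}{2\alpha-1}}\|_{L^\infty(0,1)}\le C(\xi)$ for $N=3$. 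In 3D these alone do not reach the full range $\gamma<6\alpha-3$. The paper proves the $L^k$ bounds for $u$ and $w$ only up to a factor $R_T^{k\sigma}$, with $R_T=1+\sup_t\|\rho\|_{L^\infty(0,1)}$ and $\sigma<(3\alpha-2)(1-\frac2k)$. It then closes via $R_T^{\delta}\le CR_T^{\sigma}$ with $\delta>\sigma$. This sublinear bootstrap is exactly where $\gamma<6\alpha-3$ enters; ``should come from closing this step'' does not supply it.

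The thresholds on $\alpha$ also do not come from the upper bound. That step only needs some $k\ge 3$ with $k<\varphi(\alpha)$, resp.\ $\psi(\alpha)$, i.e.\ $k^2(1-\alpha)^2<4(k-1)\alpha^2$, resp.\ $<(4\alpha-2)(k-1)\alpha$. The thresholds come from the lower bound, which your sketch leaves unspecified. Integrating $\partial_r\rho^{-1}=-\rho^{-2}\partial_r\rho$ and using H\"older against the $w$-bound gives $\|\rho^{-1}\|_{L^\infty}\le C+C\|\rho^{-1}\|_{L^\infty}^{\alpha+\frac1k}$ (near the origin this needs $k>N$). This closes only if $\alpha<1-\frac1k$, a lower restriction on $k$. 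Its competition with the upper restriction $k<\varphi(\alpha)$, resp.\ $\psi(\alpha)$, is what produces the numbers.

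Moreover, ``integrating from infinity'' fails as you state it. For $k\ge N$, the bound on $\int\rho|w-u|^k r^{N-1}dr$ does not control an unweighted integral over $(1,\infty)$. In 2D neither does $\int|\partial_r\rho^{\alpha-\frac12}|^2 r\,dr$, since $\int_1^\infty r^{-1}dr=\infty$. The paper uses the $w$-bound only on $\{\rho<\tilde\rho/2\}$, whose $dr$-measure in $(1,\infty)$ is finite by the energy estimate. On the complement it uses $\int_1^\infty(|\partial_r\rho^{\alpha-\frac12}|^2r^2+r^{-2})dr$ in 3D. In 2D it needs an extra Eulerian $L^2(r\,dr)$ estimate for $\rho^{-1/m}-\tilde\rho^{-1/m}$ from the transport equation; that estimate requires $\alpha<1-\frac2k$ and is the source of $0.54369$. (Alternatively, applying the one-dimensional inequality to $(\rho^{-1}-2\tilde\rho^{-1})_+$, which only sees $\{\rho<\tilde\rho/2\}$, would also do.)

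Finally, the elliptic operator has coefficient $\rho^\alpha$, so the $W^{2,q}$ estimate for $\mathbf u$ already contains $\nabla\rho^\alpha\cdot\nabla\mathbf u$. Deriving $\nabla\rho\in L^q$ from $\|\nabla\mathbf u\|_{L^1_tL^\infty}$ is therefore circular. The paper instead reads off $\nabla\rho\in L^\infty_tL^k$ directly from the $w$-estimate and the two-sided density bounds.
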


In the two-dimensional case $\mathbb{R}^2$, the restriction on $\alpha$ obtained in Theorem \ref{th1} can be further relaxed if the initial data satisfy additional weighted integrability conditions.
\begin{theorem}\la{th212}
Let $\OM = \mathbb{R}^2$.
Assume that $\alpha$ and $\ga$ satisfy
\be\la{th121}\ba
& 9 - 6 \sqrt{2} < \alpha < 1, \quad 1<\ga<\infty,
\ea\ee
and the initial data $(\n_0,\mathbf{u}_0)$ are spherically symmetric and satisfy \eqref{th12} and for some $\eta \in [\frac{1}{3},1]$,
\be\la{th122}\ba
|x|^{\frac{\eta}{2}} (\n_0 - \tilde{\n}), \ |x|^{\frac{\eta}{2}} \na \n_0, \ |x|^{\frac{\eta}{2}} \mathbf{u}_0 \in L^2(\mathbb{R}^2).
\ea\ee
Then the problem \eqref{ns}--\eqref{qkjbjtj}, \eqref{nxxs} has a unique global spherically symmetric classical solution $(\n,\mathbf{u})$ in $\mathbb{R}^{2} \times (0,\infty)$ satisfying \eqref{th14} and, for any $0 < T <\infty$,
\be\la{th123}\ba
(C(T))^{-1} \le \n(x,t) \le C(T), \quad \textnormal{ for all } (x,t)\in \mathbb{R}^2 \times[0,T],
\ea\ee
and
\be\la{th124}\ba
|x|^{\frac{\eta}{2}} (\n - \tilde{\n}), \ |x|^{\frac{\eta}{2}} \na \n, \ |x|^{\frac{\eta}{2}} \mathbf{u} \in L^\infty(0,T;L^2(\mathbb{R}^2)),
\ea\ee
where $C(T)>0$ is a constant depending only on $\eta$, $T$, $\tilde{\n}$, $\alpha$, $\ga$, $\underline{\n_0}$, $\| \n_0 - \tilde{\n} \|_{H^3}$, $\| \mathbf{u}_0 \|_{H^2}$, $\| |x|^{\frac{\eta}{2}} (\n_0 - \tilde{\n}) \|_{L^2}$, $\| |x|^{\frac{\eta}{2}} \na \n_0 \|_{L^2}$, and $\| |x|^{\frac{\eta}{2}} \mathbf{u}_0 \|_{L^2}$.
\end{theorem}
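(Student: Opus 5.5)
The proof follows the same continuation scheme as Theorem \ref{th1}. Local existence and uniqueness of a classical solution with strictly positive density is standard, since the system is uniformly parabolic for $\mathbf{u}$ away from vacuum. It therefore suffices to derive, on any $[0,T]$, a priori bounds for $\n$ from above and below, together with the higher-order estimates. All constants may depend on $T$, on the data, and on the weighted norms in \eqref{th122}. Once the two-sided density bound \eqref{th123} is known, the higher-order estimates in \eqref{th14} and uniqueness follow exactly as in the proof of Theorem \ref{th1}, because those steps never use the restriction on $\alpha$. The new work is confined to the lower-order part: the upper bound of $\n$ (and, via the BD structure, the lower bound) in the range $9-6\sqrt2<\alpha\le 0.54369$.

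The first step collects the basic energy estimate, $\sup_t\int(\tfrac12\n u^2+K(\n))\,dx+\int_0^T\!\!\int \n^\alpha|\na u|^2\le C$, and the BD entropy estimate obtained from \eqref{yxsd}--\eqref{yxsdfc1}, namely $\sup_t\int \n|w|^2 dx+\int_0^T\!\!\int \n^{\alpha+\ga-2}|\na\n|^2\le C$. Together they give $\sup_t\|\na\n^{\alpha-1/2}\|_{L^2}\le C$. Next I propagate the weights in \eqref{th124}. For this I multiply the momentum equation by $|x|^\eta u$, multiply \eqref{yxsdfc1} by $|x|^\eta w$, and use the mass equation weighted by $|x|^\eta$ applied to $\n-\tilde\n$ (through $K(\n)$). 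The commutator terms involve $\na|x|^\eta\sim|x|^{\eta-1}$, which is bounded for $|x|\ge1$ because $\eta\le1$. These terms are absorbed by Gronwall using the unweighted dissipation, giving $\sup_t\int|x|^\eta(\n u^2+\n w^2+K(\n))\le C$. Since $|x|^\eta\n|w|^2$ controls $|x|^\eta|\na\n^{\alpha-1/2}|^2$ up to $|x|^\eta\n u^2$, this yields the weighted $L^2$ bound on $\na\n$, provided $\n$ is bounded by a power that can be absorbed. I would close this as a bootstrap inside the upper-bound argument.

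The main step, and the one I expect to be the obstacle, is the upper bound of $\n$ in 2D. Here I would work with the radial variable. Write $\n^{\alpha-1/2}(r)-\tilde\n^{\alpha-1/2}=-\int_r^\infty \p_s\n^{\alpha-1/2}\,ds$ and split $s^{-1/2}\cdot s^{1/2}$ with the weight $s^{\eta/2}$. The 2D measure $s\,ds$ together with the extra weight $s^\eta$ gives
\[
|\n^{\alpha-1/2}(r)-\tilde\n^{\alpha-1/2}|\le \Big(\int_r^\infty s^{-1-\eta}ds\Big)^{1/2}\big\||x|^{\eta/2}\na\n^{\alpha-1/2}\big\|_{L^2}\le Cr^{-\eta/2},
\]
which controls $\n$ away from the origin. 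Near the origin I follow the approach of Theorem \ref{th1}. Using \eqref{yxsdfc1} along particle paths, I obtain a bound on $\sup\n$ in terms of $\int_0^T\|\n^{\ga}\|$-type quantities and of $\|u\|_{L^\infty}$. I then interpolate $\|\n\|_{L^\infty}$ between the BD bound and an $L^p$ bound of $\n u$ derived from $\int\n|u|^{p}$ estimates. The improvement over Theorem \ref{th1} comes from the weighted decay: it lets the far-field contribution be handled without the loss that forced $\alpha>0.54369$. The remaining exponent balance yields the condition $\alpha>9-6\sqrt2$ as the root of a quadratic inequality, $\alpha^2-18\alpha+9<0$, whose admissible root is $9-6\sqrt2$. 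The requirement $\eta\ge\tfrac13$ enters when checking this inequality. If the bound is not directly closable, I would first try a higher $L^p$ estimate $\sup_t\int\n|u|^{2+\nu}$ with $\nu$ tuned to $\eta$, then feed it back into the upper bound.

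Finally, with $\n\le C$, the lower bound comes from $\na\n^{\alpha-1/2}\in L^\infty_tL^2$ combined with $L^\infty$ control of $\na\n^{\alpha-1}$ obtained from \eqref{yxsdfc1} and the bound on $\|w\|_{L^\infty}$, as in Theorem \ref{th1}. The higher-order estimates, the continuation argument, and uniqueness then follow verbatim from Theorem \ref{th1}. The weighted bounds \eqref{th124} are the ones obtained in the first step, now valid on the whole interval $[0,T]$.
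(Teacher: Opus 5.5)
There is a genuine gap: the proposal treats the wrong step as the hard one. The threshold is $9-6\sqrt2=f(3)$, where $f(k)=1-\frac{2k\sqrt{k-1}-4k+4}{(k-2)^2}$. Your quadratic $\alpha^2-18\alpha+9<0$ is exactly the coercivity condition $k^2(1-\alpha)^2<4(k-1)\alpha^2$ of Lemma \ref{2l3} at $k=3$. That is, it is the condition for the cubic moment bounds on $\rho|u|^3r$ and $\rho^\alpha|u|^3/r$, and it does not involve $\eta$.

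Once $k=3$ is admissible, Lemmas \ref{2l3}--\ref{2l5} give $\rho\le C$ verbatim and with no weights; the part $r\ge1$ is already Lemma \ref{2l2}. So the upper bound needs neither your bootstrap nor the near-origin sketch via particle paths and interpolation. That sketch also misses the actual mechanism: the bound on $\int\rho^{k(\alpha-2)+1}|\partial_r\rho|^kr\,dr$, which comes from the $L^k$ estimates of $\rho u$ and $\rho w$, followed by the one-dimensional Sobolev inequality.

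What actually fails for $\alpha\le 1-2/k_1\approx0.54369$ is the lower bound on $(1,\infty)$. Lemma \ref{2l6} needs some $k$ with $f(k)<\alpha<1-2/k$, so ``the lower bound as in Theorem \ref{th1}'' does not close in the new range. Your substitute, $L^\infty$ control of $\partial_r\rho^{\alpha-1}=\frac{\alpha-1}{\alpha}(w-u)$ through $\|w\|_{L^\infty}$, is unavailable. Along particle paths $\dot w=-\frac{\gamma}{\alpha}\rho^{\gamma-\alpha}(w-u)$, so a maximum principle for $w$ requires $\|u\|_{L^\infty}$, which none of your estimates provide. Theorem \ref{th1} does not argue this way either.

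The missing idea is that the weights are needed for the far-field lower bound, not the upper bound. The bound $\int_0^\infty|\partial_r\rho^{\alpha-1/2}|^2r^{1+\eta}dr\le C$ gives $\int_1^\infty\chi_{\{\rho\ge\tilde\rho/2\}}|\partial_r\rho^{\alpha-1/2}|\,dr\le C$. Cauchy--Schwarz against the unweighted BD bound cannot give this, since $\int_1^\infty r^{-1}dr=\infty$. On $\{\rho<\tilde\rho/2\}$ one uses $\rho^{-2}|\rho^{-1}-\tilde\rho^{-1}|\le C(\rho^{-1}-\tilde\rho^{-1})^{1+\alpha+1/k}\rho^{\alpha-2+1/k}$ together with the bound on $\int\rho^{k(\alpha-2)+1}|\partial_r\rho|^kr\,dr$. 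The Sobolev estimate for $\|\rho^{-1}-\tilde\rho^{-1}\|_{L^\infty(1,\infty)}^2$ then closes by Young provided $\alpha<1-1/k$. The Lagrangian argument near the origin needs the same condition, and some $k\ge3$ with $f(k)<\alpha<1-1/k$ exists for every $\alpha\in(9-6\sqrt2,1)$. Once the weighted bound is proved, your pointwise estimate $|\rho^{\alpha-1/2}-\tilde\rho^{\alpha-1/2}|\le Cr^{-\eta/2}$ is useful here, since it gives $\rho\ge c$ for large $r$, rather than for the upper bound.

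Your derivation of the weighted bounds is also incomplete. The weight $r^{1+\eta}$ produces $\frac{\eta}{2}\int\rho u^3r^\eta dr$ and $\frac{\eta}{2}\int\rho uw^2r^\eta dr$, which energy-level dissipation and Gronwall cannot absorb. They need the $k=3$ bounds on $\rho|u|^3$, $\rho|w|^3$, $\rho^\alpha|u|^3/r$ and $\rho^\alpha|u||\partial_ru|^2$, together with $\rho\le C$. The condition $\eta\ge\frac13$ is used exactly here, to dominate $\int_0^1\rho|w|^3r^{(1+3\eta)/2}dr$ by $\int_0^1\rho|w|^3r\,dr$. Near the origin the factor $r^{\eta-1}$ is singular. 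It must be controlled by the dissipation terms $\rho^\alpha u^2/r$ and $\rho^\alpha|u|^3/r$, not by its boundedness for $r\ge1$.
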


\begin{remark}\la{rk22}
Since $9-6\sqrt{2} \approx 0.51472$, the restriction on $\alpha$ in Theorem \ref{th1} for the two-dimensional case can be improved by imposing additional weighted integrability conditions on the initial data.
\end{remark}

For the three-dimensional bounded domain, we have the following result on the global existence of classical solutions.
\begin{theorem}\la{th2}
Let $\OM=B_R$.
Assume that $\alpha$ and $\ga$ satisfy
\be\la{th21}\ba
0.67661 < \alpha<1, \quad 1<\ga<6\alpha-3,
\ea\ee
and the initial data $(\n_0,\mathbf{u}_0)$ are spherically symmetric and satisfy
\be\la{th22}\ba
0 < \underline{\n_0} \le \n_0 \le \overline{\n_0}, \quad
\n_0 - \tilde{\n} \in H^3(B_R), \quad \mathbf{u}_0 \in H_0^1(B_R) \cap H^3(B_R),
\ea\ee
where $\underline{\n_0}$ and $\overline{\n_0}$ are positive constants.
Then the problem \eqref{ns}--\eqref{cz}, \eqref{yjybjtj}, \eqref{nxxs} has a unique global spherically symmetric classical solution $(\n,\mathbf{u})$ in $B_R \times (0,\infty)$ satisfying, for any $0 < T <\infty$,
\be\la{th23}\ba
(C(T))^{-1} \le \n(x,t) \le C(T), \quad \textnormal{ for all } (x,t)\in B_R \times[0,T],
\ea\ee
and
\be\la{th24}\ba
\begin{cases}
\n - \tilde{\n} \in C([0,T];H^3(B_R)), \n_t \in C([0,T];H^2(B_R)), \\
\mathbf{u} \in C([0,T];H^3(B_R)) \cap L^2([0,T];H^4(B_R)), \\
\mathbf{u}_t \in C([0,T];H^1(B_R)) \cap L^2([0,T];H^2(B_R)),\mathbf{u}_{tt} \in L^2(B_R \times (0,T)),
\\
\sqrt{t} \na^4 \mathbf{u}, \sqrt{t} \na^2 \mathbf{u}_{t}, \sqrt{t} \mathbf{u}_{tt} \in L^\infty((0,T);L^2(B_R)), \sqrt{t} \na \mathbf{u}_{tt} \in L^2(B_R \times (0,T)),
\end{cases}
\ea\ee
where $C(T)>0$ is a constant depending only on $R$, $T$, $\alpha$, $\ga$, $\underline{\n_0}$, $\| \n_0 - \tilde{\n} \|_{H^3}$, and $\| \mathbf{u}_0 \|_{H^2}$.
\end{theorem}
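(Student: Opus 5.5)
Theorem \ref{th2} will follow the standard continuation scheme. Since $\n_0\ge\underline{\n_0}>0$ and the data lie in $H^3$, the classical local theory for nondegenerate viscosities (the system is uniformly parabolic for $\mathbf{u}$ while $\n$ stays positive) gives a unique local spherically symmetric classical solution on some $[0,T_*]$ in the class \eqref{th24}. It then suffices to derive a priori bounds, depending only on $T$ and the data, for the upper and lower bounds of $\n$ and for the norms in \eqref{th24}. The decisive step is the pointwise bound \eqref{th23}; once $C(T)^{-1}\le\n\le C(T)$ is known, the system is uniformly parabolic with smooth coefficients, and the higher-order estimates follow as in \cite{Z,HMZ}.

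\textbf{Step 1: energy and BD entropy.} Multiplying the momentum equation by $u r^{2}$ and integrating over $(0,R)$, with $u(R,t)=0$, gives the basic energy identity. This controls $\int(\n u^2+\n^\ga)\,dx$ and $\int_0^T\int\n^\alpha(|\na\mathbf{u}|^2+(\alpha-1)(\div\mathbf{u})^2)$; the dissipation is nonnegative by \eqref{i2}. Next, the effective velocity equation \eqref{yxsdfc1} multiplied by $\n w$ gives the BD entropy bound on $\int\n|w|^2$. Combined with the energy estimate, this yields $\sup_t\int\n^{2\alpha-3}|\na\n|^2\,dx+\int_0^T\int\n^{\alpha+\ga-3}|\na\n|^2\le C$. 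The boundary terms vanish because $u=0$ on $\p\OM$, although the BD estimate needs care with $\n_r$ at $r=R$. I expect the boundary to be handled either through the equation for $w$ or by noting that no boundary integral appears when one integrates the transport form.

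\textbf{Step 2: bounds on $\n$.} Away from the origin, for $r\ge r_0$, the one-dimensional Sobolev embedding applied to $\n^{\alpha-1/2}$ gives upper and lower bounds via $\int_{r_0}^R|(\n^{\alpha-1/2})_r|^2dr\le Cr_0^{-2}$. The difficulty is at the centre. For the upper bound I will follow \cite{Z,HMZ}. Using \eqref{yxsdfc1} along particle paths, $w$ satisfies $\n\dot w=-(\n^\ga)_r$, which gives $L^p$ bounds on $w$ via weighted estimates $\int\n|w|^{p}$. The BD bound with $\alpha<1$ then yields $\n^{\alpha-1/2}\in L^\infty$, or a weighted interpolation estimate of the form $\n^{\alpha-1/2}\lesssim\|\na\n^{\alpha-1/2}\|_{L^2}^{\theta}\|\n\|_{L^q}^{1-\theta}$ in 3D, where $L^q$ comes from higher integrability of the density.

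The constraint $\ga<6\alpha-3$ should enter exactly here: it lets the pressure estimate $\int\n^{\ga+1}$ or $\int\n|w|^{2+\ep}$ close using $\n^{\alpha-1/2}\in L^6$, which is Sobolev in 3D from the BD bound and gives $\n\in L^{6\alpha-3}$. The threshold $0.67661$ will be the root of the polynomial condition needed to close the $L^p$ estimate of $w$, or of $u$, with $p$ large enough to dominate the singular term $\frac{2}{r}(\n^\alpha)_r u$.

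For the lower bound, once $\n\le C$, I will estimate $\n^{-1}$ via the equation for $\phi(\n)=\n^{\alpha-1}$ or $\log\n$ along trajectories. Writing $(\n^\alpha)_r=\n(w-u)$, the continuity equation in Lagrangian form gives
\[
\frac{D}{Dt}\n^{\alpha}=-\alpha\n^\alpha\div\mathbf{u},
\]
and the momentum equation rewritten in terms of the effective viscous flux $\n^\alpha\div\mathbf{u}-\n^\ga$ (radially integrable) bounds $\int_0^T\|\div\mathbf{u}\|_{L^\infty}$ from the $L^p$ bounds on $u$ and $w$.

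\textbf{Step 3: higher regularity.} With \eqref{th23} available, I will derive Hoff-type estimates: $\sup_t\|\na\mathbf{u}\|_{L^2}$ and $\int\n|\dot{\mathbf{u}}|^2$, then $\na\n\in L^\infty(L^q)$ via the effective flux and a Beale–Kato–Majda-type argument (trivial here since the vorticity vanishes), then $H^2$ and $H^3$ estimates with time weights $\sqrt t$ for $\na^4\mathbf{u}$, $\na^2\mathbf{u}_t$ and $\mathbf{u}_{tt}$. All of this is standard for nondegenerate parabolic systems on $B_R$ with Dirichlet data, using elliptic regularity for the Lamé-type operator $\div(\n^\alpha\mathbb{D}\mathbf{u})+\na((\alpha-1)\n^\alpha\div\mathbf{u})$. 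Uniqueness follows from an $L^2$ energy estimate for the difference of two solutions in this class, and the continuation argument then concludes the proof.

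\textbf{Main obstacle.} The main obstacle is Step 2 near $r=0$: the upper bound on $\n$ in 3D. This is where the numeric restrictions on $\alpha$ and $\ga$ arise. I would first try to get $\sup_t\int\n|w|^{p}$ for $p$ large using \eqref{yxsdfc1}, then use $\n^{\alpha}\lesssim$ an integral of $\n|w-u|$, and close the argument with the BD and energy bounds.
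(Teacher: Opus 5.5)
Your overall architecture matches the paper's: local existence, energy and BD estimates, pointwise bounds on $\rho$, higher-order estimates, continuation. You are also right that the BD estimate needs no information on $\rho_r$ at $r=R$, since multiplying \eqref{yxsdfc1} by $wr^2$ only produces boundary terms carrying $u(R,t)=0$. The gap is Step 2, which is the entire content of the theorem.

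\textbf{Upper bound.} Your claim that BD yields $\rho^{\alpha-1/2}\in L^\infty$, or an interpolation $\|\rho^{\alpha-1/2}\|_{L^\infty}\lesssim\|\nabla\rho^{\alpha-1/2}\|_{L^2}^{\theta}\|\rho\|_{L^q}^{1-\theta}$, is false. In three dimensions $\dot H^1\cap L^q$ ($q<\infty$) does not embed in $L^\infty$, even for radial functions (take $|x|^{-a}$ near $0$ with $a>0$ small). BD only gives $\rho^{\alpha-1/2}\lesssim r^{-(1+\xi)/2}$, i.e.\ the weighted bound of Lemma \ref{3l2}. What is needed is integrability of $\partial_r\rho$ above the critical exponent $3$, and the paper obtains it in three steps:
\begin{itemize}
\item For $k\ge 3$ with $g(k)<\alpha$ it proves $\int_0^R\rho|u|^kr^2dr\le CR_T^{k\sigma}$, where $R_T=1+\sup_t\|\rho\|_{L^\infty(0,1)}$. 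The condition $g(k)<\alpha$ is exactly $k^2(1-\alpha)^2<(4\alpha-2)(k-1)\alpha$, which is what allows the singular term $2k(1-\alpha)\int\rho^\alpha|u|^{k-2}uu_rr\,dr$ to be absorbed.
\item The same bound follows for $w$, and then $\int_0^R\rho^{k(\alpha-2)+1}|\partial_r\rho|^kr^2dr\le CR_T^{k\sigma}$ via $(\rho^\alpha)_r=\rho(w-u)$.
\item The pressure term $\int_0^1\rho^{k(\gamma-\alpha)+\alpha}r^kdr$ is controlled by the weighted $L^\infty$ bound only at the price of $R_T^{k\sigma}$ with $\sigma<(3\alpha-2)(1-\frac2k)$. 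The resulting requirement $(k-1)\gamma<3k\alpha-k+k\sigma-1$ becomes exactly $\gamma<6\alpha-3$ as $\sigma\uparrow(3\alpha-2)(1-\frac2k)$. The bound $\rho\le C$ then comes from the nonlinear bootstrap $R_T^{\delta}\le CR_T^{\sigma}$ with $\sigma<\delta$ (Lemma \ref{3l5}), which requires $k>3$.
\end{itemize}
Your last paragraph points in this direction. Without the $k>3$ weighted gradient estimate and the $R_T$-bootstrap, however, there is no upper bound.

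\textbf{Lower bound.} The one-dimensional Sobolev inequality for $\rho^{\alpha-1/2}$ cannot bound $\rho$ from below away from the origin, because $\alpha>\frac12$ makes this a positive power. Bounding $\int_0^T\|\mathrm{div}\,\mathbf u\|_{L^\infty}dt$ through $F=\alpha\rho^\alpha\mathrm{div}\,\mathbf u-\rho^\gamma$ is circular: $\mathrm{div}\,\mathbf u=(F+\rho^\gamma)/(\alpha\rho^\alpha)$ cannot be controlled without a lower bound on $\rho$. In addition, $F_r=\rho\dot u+\frac2r(\rho^\alpha)_ru$ is singular at $r=0$, and $F(R,t)$ is not prescribed by the Dirichlet condition.

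The missing idea is the mass Lagrangian coordinate $y=\int_0^r\rho s^2ds$, in which $\int_0^y\rho^{-1}dy=\frac{r^3}{3}$. In $B_R$ the mass $M=\int_0^R\rho r^2dr$ is conserved, so \eqref{ywsi2} on $(0,M)$ gives, once $R_T\le C$,
\begin{align*}
\rho^{-1}&\le\frac{R^3}{3M}+\int_0^R\rho^{-2}|\rho_r|\,dr\\
&\le C+C\|\rho^{-1}\|_{L^\infty}^{\alpha+\frac1k}\Big(\int_0^R\rho^{k(\alpha-2)+1}|\rho_r|^kr^2\,dr\Big)^{\frac1k}\Big(\int_0^Rr^{-\frac{2}{k-1}}\,dr\Big)^{\frac{k-1}{k}}.
\end{align*}
This closes by Young's inequality precisely when $k>3$ and $\alpha<1-\frac1k$, as in the second half of Lemma \ref{3l6}. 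The threshold $0.67661=1-\frac1{k_2}$ is the point where $\alpha<1-\frac1k$ becomes compatible with $g(k)<\alpha$. The $L^k$-closure condition alone would only require $\alpha>g(3)\approx0.6754$, so your attribution of the threshold is off.

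\textbf{Step 3.} You do not need an effective-flux/Beale--Kato--Majda argument, which is awkward here because $\nabla F$ contains $\nabla\rho^\alpha\cdot\nabla\mathbf u$. Once $C^{-1}\le\rho\le C$, the same weighted estimate gives $\nabla\rho\in L^\infty(0,T;L^k)$ with $k>3$ directly, and the higher-order estimates start from that.
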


\begin{remark}\la{lrk1}
It follows from \eqref{th13}, \eqref{th14}, \eqref{th23}, \eqref{th24} and the Sobolev embedding that
\be\la{csol1}\ba
\n, P(\n), \mu(\n), \lambda(\n) \in C\left([0,T];C^1(\OM) \right), \quad \n_t \in C\left([0,T];C(\OM) \right).
\ea\ee
Moreover, the standard embedding together with \eqref{th14} and \eqref{th24} yields that for any $0<\tau<T<\infty$,
\be\la{csol2}\ba
\mathbf{u} \in L^\infty(\tau,T;H^{4}(\OM)) \cap H^1(0,T;H^2(\OM)) \hookrightarrow
C\left([\tau,T];C^2(\OM) \right),
\ea\ee
and	
\be\la{csol3}\ba
\mathbf{u}_t \in L^\infty(\tau,T;H^2(\OM))\cap H^1(\tau ,T;H^1(\OM))\hookrightarrow
C\left([\tau,T]; C(\OM) \right).
\ea\ee
Combining \eqref{csol1}--\eqref{csol3}, we conclude that the solutions obtained in Theorems \ref{th1} and \ref{th2} are in fact classical solutions.
\end{remark}

\begin{remark}\la{rk1}
It should be emphasized that the lower bound restriction for $\alpha$ in \eqref{th11} and \eqref{th21} is stated in an approximate numerical form.
The exact threshold can be characterized as follows.

For $\OM = \mathbb{R}^2$, the exact threshold is
\be\la{k02}\ba
1 - \frac{2}{k_1} < \alpha < 1,
\ea\ee
where
\be\la{k1}\ba
k_1 = 2 + \left( 2 + \sqrt{\frac{44}{27}} \right)^{\frac{1}{3}} + \left( 2 - \sqrt{\frac{44}{27}} \right)^{\frac{1}{3}},
\ea\ee
which is the unique real root in $(2,\infty)$ of
\be\nonumber\ba
k_1^3 - 6k_1^2+8k_1-4 = 0.
\ea\ee
Moreover, $k_1$ satisfies
\be\la{k12}
1 - \frac{2k_1\sqrt{k_1-1}-4k_1+4}{(k_1-2)^2} = 1 - \frac{2}{k_1},
\ee
and for any $k>k_1$,
\be\la{k13}
1 - \frac{2k\sqrt{k-1}-4k+4}{(k-2)^2} < 1 - \frac{2}{k}.
\ee
For $\OM = \mathbb{R}^3$ or $\OM = B_R$, the exact threshold is given by
\be\la{k03}\ba
1 - \frac{1}{k_2} < \alpha < 1,
\ea\ee
where
\be\la{k2}\ba
k_2 = \frac{3}{2} + \left( \frac{5}{8} + \sqrt{\frac{83}{432}} \right)^{\frac{1}{3}} + \left( \frac{5}{8} - \sqrt{\frac{83}{432}} \right)^{\frac{1}{3}},
\ea\ee
which is the unique real root in $(2,\infty)$ of
\be\nonumber\ba
2 k_2^3 - 9 k_2^2 + 10 k_2 - 4 = 0.
\ea\ee
Furthermore, $k_2$ satisfies
\be\la{k21}
1 - \frac{\sqrt{2k_2^3-k_2^2-2k_2+1}-3k_2+3}{(k_2-2)^2} = 1 - \frac{1}{k_2},
\ee
and for any $k>k_2$,
\be\la{k22}
1 - \frac{\sqrt{2k^3-k^2-2k+1}-3k+3}{(k-2)^2} < 1 - \frac{1}{k}.
\ee
\end{remark}

\begin{remark}\la{rk3}
In \cite[Theorem 2.1]{HMZ}, the authors established the global existence of classical solutions in a three-dimensional bounded domain under the following conditions:
\be\nonumber
0.686<\alpha<1, \quad 1<\gamma<6\alpha-3+\frac{3-5\alpha}{2n_3(\alpha)},
\ee
where $n_3(\alpha)$ is defined as follows:
Define
\begin{equation}\nonumber
\alpha_{3,-}(n) = 1 - \frac{\sqrt{4n(4n^2-n-1)+1}-6n+3}{4n^2-8n+4}.
\end{equation}
It is strictly increasing on $(1,\infty)$ and satisfies
\begin{equation}\nonumber
\lim_{n \to 1^+} \alpha_{3,-}(n) = \frac{2}{3}, \quad \lim_{n \to \infty} \alpha_{3,-}(n) = 1.
\end{equation}
Let $n_3(\alpha)$ denote the inverse function of $\alpha_{3,-}(n)$, mapping $(\frac{2}{3},1)$ to $(1,\infty)$.
Then $n_3(\alpha)$ is strictly increasing on $(\frac{2}{3},1)$ and satisfies
\begin{equation}\nonumber
\lim_{\alpha \to \frac{2}{3}^+} n_3(\alpha) = 1, \quad \lim_{\alpha \to 1^-} n_3(\alpha) = \infty.
\end{equation}
Therefore, Theorem \ref{th2} generalizes the corresponding result in \cite[Theorem 2.1]{HMZ} for the three-dimensional case.
\end{remark}

\begin{remark}\la{lrk5}
It is worth noting that our results hold in the two-dimensional case for any $1<\ga<\infty$, while in the three-dimensional case we require $1<\ga< 6\alpha-3$.
The essential difference lies in the geometric factor $r^{N-1}$ arising from the spherical coordinate transformation.
More precisely, the BD entropy estimate provides an $L^\infty(0,T;L^2(\mathbb{R}^N))$ estimate for $\na \rho^{\alpha-\frac{1}{2}}$.
Through the spherical coordinate transformation, this yields an $L^\infty(0,T;L^2(0,\infty))$ bound for $(\p_r \rho^{\alpha-\frac{1}{2}}) r^{\frac{N-1}{2}}$.
Due to the singularity near the origin, we cannot directly obtain an $L^\infty(0,1)$ bound for the density.
In fact, by applying the one-dimensional Sobolev inequality, we can establish the following $r$-weighted estimates: for any $\xi>0$,
\be\nonumber\ba
\sup_{0 \le t \le T} \| \n r^\xi \|_{L^\infty(0,1)} \le C(\xi) \quad & \textnormal{ if } \  N=2, \\
\sup_{0 \le t \le T} \| \n r^{\frac{1+\xi}{2\alpha-1}} \|_{L^\infty(0,1)} \le C(\xi) \quad & \textnormal{ if } \  N=3.
\ea\ee
Note that in the two-dimensional case, the weighted exponent can be taken arbitrarily small, whereas in the three-dimensional case, it can no longer be taken arbitrarily small.
These weighted estimates are mainly used to bound the pressure term $\n^\ga$.
This distinction in the weighted exponents is precisely why the upper bound restriction on $\ga$ is needed when $N=3$.
\end{remark}

We now make some comments on the analysis of this paper.
Note that for initial data satisfying (\ref{th12}), the local existence and uniqueness of classical solutions to the problem \eqref{ns}--\eqref{cz}, \eqref{nxxs} with the boundary conditions \eqref{qkjbjtj} or \eqref{yjybjtj} can be established by arguments similar to those in \cite{CK,LPZ,ZZ,Z}.
Hence, to extend the local classical solution globally in time, it is essential to derive global a priori estimates in suitable higher-order norms.
The key point is to obtain the upper and lower bounds of the density.

For the two-dimensional case, we first combine the standard energy estimate with the BD entropy estimate and the Sobolev inequality to obtain
\be\la{cm1}
\sup_{0 \le t \le T} \| (\n-\tilde{\n}) r^{\frac{1}{p}} \|_{L^p(0,\infty)} \le C(p), \quad \text{ for all} \  2 \le p <\infty,
\ee
and
\be\la{cm2}
\sup_{0 \le t \le T} \| \p_r \n^{\alpha-\frac{1}{2}} r^{\frac{1}{2}} \|_{L^2(0,\infty)} \le C.
\ee
Combining these estimates with the Sobolev inequality and taking advantage of the spherical symmetry, we can derive
\be\la{cm3}
\sup_{0 \le t \le T} \| \n \|_{L^\infty(1,\infty)} \le C.
\ee
However, due to the singularity at the origin, (\ref{cm1}) and (\ref{cm2}) do not directly yield an upper bound for the density near the origin.
Indeed, we can only obtain an $r$-weighted estimate:
\be\nonumber
\sup_{0 \le t \le T} \| \n r^\xi \|_{L^\infty(0,1)} \le C(\xi), \quad \text{ for all} \  \xi>0.
\ee
Thus, to derive the $L^\infty(0,T;L^\infty(0,1))$ bound for $\n$, it is necessary to establish higher integrability of $\p_r \n$.
This can be achieved via higher integrability of $\n u$ and $\n w$.
To this end, we derive from $(\ref{nsqdc})_2$ and (\ref{yxsdfc1}) the $L^\infty(0,T;L^k(0,\infty))$ estimates for $(\n r)^{\frac{1}{k}} u$ and $(\n r)^{\frac{1}{k}} w$ with $k>2$.
A careful analysis shows that $k$ must satisfy
\be\nonumber
1 - \frac{2k\sqrt{k-1}-4k+4}{(k-2)^2} < \alpha.
\ee
Using these estimates and the definition of the effective velocity $w$, we further obtain the $L^\infty(0,T;L^k(0,\infty))$ bound for $(\n^{k (\alpha-2)+1} r)^{\frac{1}{k}} \p_r \n$.
Then, by virtue of the one-dimensional Sobolev inequality, we arrive at
\be\nonumber
\sup_{0 \le t \le T} \| \n \|_{L^\infty(0,1)} \le C,
\ee
which together with (\ref{cm3}) yields the desired upper bound of $\n$.

Next, we turn to the crucial lower bound estimate for $\n$.
Note that in the mass (Lagrangian) coordinates, the integral of $\n^{-1}$ represents the volume of the fluid, which is conserved in bounded domains.
Therefore, we introduce Lagrangian coordinates to derive the lower bound for $\n$ near the origin.
We define the coordinate transformation, for any $r \in (0,\infty)$,
\be\la{lzb1}
y(r,t) = \int_0^r \rho(s,t) s^{N-1} \, ds, \quad \tau(r,t) = t.
\ee
This, together with (\ref{nsqdc}), implies
\be\la{lzb2}
\frac{\partial y}{\partial r} = \rho r^{N-1}, \quad \frac{\partial y}{\partial t} = -\rho u r^{N-1}, \quad \frac{\partial \tau}{\partial r} = 0, \quad \frac{\partial \tau}{\partial t} = 1, \quad \frac{\partial r}{\partial \tau} = u.
\ee
Moreover, for any $r \in (0,\infty)$ with $y$ defined in (\ref{lzb1}), we have
\be\nonumber
\int_0^y \n^{-1} dy = \int_0^r \n^{-1} \n r^{N-1} dr = \frac{1}{N} r^N.
\ee
Combining this with the $L^\infty(0,T;L^k(0,\infty))$ estimate for $(\n^{k (\alpha-2)+1} r)^{\frac{1}{k}} \p_r \n$, we can obtain the $L^\infty(0,1)$ bound for $\n^{-1}$.
Since the domain is unbounded, this argument cannot be directly applied to derive the $L^\infty(1,\infty)$ bound for $\n^{-1}$.
We therefore work in Eulerian coordinates.
For the transport equation $(\ref{nsqdc})_1$, we employ a standard energy-type estimate to obtain the $L^\infty(1,\infty)$ bound for $\n^{-1}$, provided that there exists some $k \ge 3$ such that
\be\nonumber
1 - \frac{2k\sqrt{k-1}-4k+4}{(k-2)^2} < \alpha < 1 - \frac{2}{k}.
\ee
For such a $k$ to exist, we need $\alpha$ to satisfy (\ref{k02}).

If the initial data further satisfy (\ref{th122}), we derive from $(\ref{nsqdc})_2$ and $(\ref{yxsdfc1})$ the $L^\infty(0,T;L^2(0,\infty))$ bounds for $\sqrt{\n} u r^{\frac{1+\eta}{2}}$ and $\sqrt{\n} w r^{\frac{1+\eta}{2}}$.
These estimates, in turn, imply the $L^\infty(0,T;L^2(0,\infty))$ bounds for $\p_r \n^{\alpha-\frac{1}{2}} r^{\frac{1+\eta}{2}}$.
Using these bounds and the previous estimates, we obtain the upper and lower bounds for the density under the condition that there exists some $k \ge 3$ such that
\be\nonumber
1 - \frac{2k\sqrt{k-1}-4k+4}{(k-2)^2} < \alpha < 1 - \frac{1}{k}.
\ee
Such a $k$ exists provided that $\alpha$ satisfies (\ref{th121}).

For the three-dimensional case, following arguments similar to those in the two-dimensional case, we can derive the standard energy estimate, the BD entropy estimate, and the $L^\infty(0,T;L^k(0,\infty))$ estimates for $(\n r^2)^{\frac{1}{k}} u$ and $(\n r^2)^{\frac{1}{k}} w$ with $k \ge 3$.
These estimates yield the upper bound for the density, provided that there exists $k$ such that
\be\nonumber
1 - \frac{\sqrt{2k^3-k^2-2k+1}-3k+3}{(k-2)^2} < \alpha.
\ee
We now establish the lower bound of $\n$.
On the one hand, in Lagrangian coordinates and following arguments similar to the two-dimensional case, we can obtain the $L^\infty(0,1)$ bound for $\n^{-1}$.
On the other hand, the BD entropy estimate combined with the Cauchy inequality implies
\be\nonumber\ba
\int_1^\infty |\p_r \n^{\alpha-1}| dr \le \frac{1}{2} \int_1^\infty \left( |\p_r \n^{\alpha-1}|^2 r^2 + r^{-2} \right) dr \le C.
\ea\ee
Combining this with the above estimates yields the $L^\infty(1,\infty)$ bound for $\n^{-1}$.
In the derivation, we need to find $k > 3$ such that
\be\nonumber
1 - \frac{\sqrt{2k^3-k^2-2k+1}-3k+3}{(k-2)^2} < \alpha < 1 - \frac{1}{k}.
\ee
For such a $k$ to exist, $\alpha$ is required to satisfy (\ref{k03}).

With these upper and lower bounds of the density at hand, we adapt the standard arguments in \cite{CL,HLX2,HMZ} to establish higher-order estimates, which ensure that the local solution can be extended globally in time.

The rest of this paper is organized as follows:
Section 2 introduces some essential inequalities and known facts.
Sections 3 and 4 are devoted to deriving the upper bound and lower bound estimates of the density for $\OM=\mathbb{R}^2$ and $\OM=\mathbb{R}^3$, respectively.
In Section 5, we establish the higher-order estimates.
Finally, the proofs of Theorems \ref{th1} and \ref{th2} are presented in Section 6.

\section{Preliminaries}
In this section, we recall some known results and elementary inequalities
that will be used frequently in subsequent analysis.

First, we recall the following local existence result of classical solutions, which can be proved by arguments similar to those in \cite{CK,LPZ,ZZ,Z}.
\begin{lemma}\la{lct}
Assume that the initial data $(\n_0, \mathbf{u}_0)$ satisfy \eqref{th12} or \eqref{th22}.
Then there is a small time $T>0$ depending only on $\alpha$, $\ga$, $\tilde{\n}$, and the initial data such that the problem \eqref{ns}--\eqref{cz}, \eqref{nxxs} with the boundary conditions \eqref{qkjbjtj} or \eqref{yjybjtj} admits a unique classical solution $(\n,\mathbf{u})$ on $\OM \times (0,T]$ satisfying \eqref{th13}, \eqref{th14} or \eqref{th23}, \eqref{th24}.
Moreover, if the initial data further satisfy \eqref{th122}, then the local classical solution satisfies \eqref{th124}.
\end{lemma}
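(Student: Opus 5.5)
The plan is a standard contraction-mapping argument for the parabolic–hyperbolic system, using that the initial density is bounded away from zero, as in \cite{CK,LPZ,ZZ,Z}. Since $\n_0\ge\underline{\n_0}>0$, the viscosity coefficients $\mu(\n)=\n^\alpha$ and $\lambda(\n)=(\alpha-1)\n^\alpha$ stay strictly positive and smooth near the initial state. Hence degeneracy plays no role for short times, and the problem behaves like the compressible Navier--Stokes system with variable, non-degenerate viscosity. I would first divide the momentum equation by $\n$ and write it as
\[
\mathbf{u}_t-\n^{-1}\div(\n^\alpha\mathbb{D}\mathbf{u})-\n^{-1}\na((\alpha-1)\n^\alpha\div\mathbf{u})=-\mathbf{u}\cdot\na\mathbf{u}-\n^{-1}\na P .
\]
This is a uniformly parabolic (Lam\'e-type) system for $\mathbf{u}$ once $\n$ is frozen, since $\mu+N\lambda=(1+N(\alpha-1))\n^\alpha$ plus the ellipticity of $\mu$ gives strong ellipticity.

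\textbf{Step 1 (linearization).} Given $\mathbf{v}$ in a ball of $C([0,T];H^3)\cap L^2(0,T;H^4)$ with $\mathbf{v}(0)=\mathbf{u}_0$ and $\mathbf{v}$ spherically symmetric, solve the linear transport equation $\n_t+\mathbf{v}\cdot\na\n+\n\div\mathbf{v}=0$. Characteristics give the positive lower and upper bounds (\ref{th13}) on a short interval, with $\n-\tilde\n\in C([0,T];H^3)$. The $H^3$ energy estimate requires $\na\mathbf{v}\in L^1(0,T;H^3)$, which $L^2H^4$ supplies.

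\textbf{Step 2 (velocity).} Solve the linear parabolic system for $\mathbf{u}$ with coefficients built from this $\n$, with either the far-field condition (\ref{qkjbjtj}) or the Dirichlet condition (\ref{yjybjtj}), and the convection term evaluated at $\mathbf{v}$. Standard energy estimates for $\mathbf{u}$, $\mathbf{u}_t$, $\mathbf{u}_{tt}$ give the regularity in (\ref{th14}) or (\ref{th24}), with the time weights $\sqrt t$ for the highest norms. These use elliptic regularity for the Lam\'e operator (on $B_R$ with Dirichlet data, or on $\mathbb{R}^N$) together with the compatibility supplied by $\mathbf{u}_0\in H^3$ (and $\mathbf{u}_0\in H^1_0$ in the bounded case).

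\textbf{Step 3 (contraction and symmetry).} Show that $\mathbf{v}\mapsto\mathbf{u}$ maps the ball into itself for small $T$ and is a contraction in a lower norm such as $L^\infty(0,T;L^2)\cap L^2(0,T;H^1)$ for $\mathbf{u}$ and $L^\infty L^2$ for $\n$. The fixed point, combined with weak-star compactness in the high norms, gives existence. The same difference estimate gives uniqueness. Spherical symmetry is preserved by rotation invariance of the system together with uniqueness.

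\textbf{Step 4 (weights).} For the weighted statement (\ref{th124}) in $\mathbb{R}^2$, multiply the transport equation, its gradient, and the momentum equation by $|x|^{\eta}$ times the respective quantities and integrate. Since $|\na|x|^\eta|\le\eta|x|^{\eta-1}$ and $\eta\le1$, the commutator terms are bounded by $C(1+|x|^\eta)$ times already-controlled unweighted quantities, and $\mathbf{u}\in L^\infty$ by Step 2. Gronwall then closes these estimates on $[0,T]$.

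The main obstacle is Step 2 in the unbounded domain with non-vacuum far field. There, the $H^4$ and $\mathbf{u}_{tt}$ estimates must be derived uniformly, and the pressure term $\n^{-1}\na P$ must be handled via $\n-\tilde\n\in H^3$ rather than via $\n$ itself. I would handle this by working with perturbation variables $\n-\tilde\n$ and $\mathbf{u}$ and invoking the non-degenerate theory exactly as in \cite{CK,Z}.
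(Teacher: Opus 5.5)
Your Steps 1--3 are the standard non-degenerate iteration to which the paper simply defers (\cite{CK,LPZ,ZZ,Z}). For the Cauchy problem they are fine in outline, with one bookkeeping correction: the iteration ball must also control $\mathbf{v}_t$ (e.g.\ in $L^\infty(0,T;H^1)\cap L^2(0,T;H^2)$). Time-differentiating the linearized momentum equation produces $\rho\,\mathbf{v}_t\cdot\nabla\mathbf{u}$ and $\rho_{tt}=-\div(\rho_t\mathbf{v}+\rho\mathbf{v}_t)$. There are, however, two genuine gaps.

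\textbf{Step 4 does not close for $|x|^{\eta/2}\nabla\rho$.} Applying $\nabla$ to the continuity equation and testing with $|x|^{\eta}\nabla\rho$ produces $\int|x|^{\eta}\rho\,\nabla\div\mathbf{u}\cdot\nabla\rho\,dx$. This is not a commutator term: it needs $|x|^{\eta/2}\nabla^2\mathbf{u}\in L^1(0,T;L^2)$, and that is not available. Interpolating your weighted $L^2$ bound on $\mathbf{u}$ with unweighted $H^k$ bounds only puts a smaller power of $|x|$ on $\nabla^2\mathbf{u}$. Propagating the full weight at that level would require $|x|^{\eta/2}\nabla\mathbf{u}_0\in L^2$, which \eqref{th122} does not give. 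So the Gronwall argument does not close.

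The missing idea is the BD structure, as in the proof of Lemma \ref{2l7}. The effective velocity $w=u+\rho^{-1}(\rho^\alpha)_r$ solves the first-order equation \eqref{yxsdfc1}. Testing it with $wr^{1+\eta}$ as in \eqref{2713}, together with the weighted identity \eqref{276} for $u$, gives $\sup_t\int_0^\infty\rho(u^2+w^2)r^{1+\eta}dr\le C$. Via \eqref{2715} this yields the weighted bound on $\nabla\rho$, using only the bounds on $\rho^{\pm1}$ and $\nabla\mathbf{u}$ that the local solution already has.

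Two smaller points in Step 4. Your claim that the commutators are bounded by $C(1+|x|^\eta)$ times unweighted quantities is false near the origin, since $|\nabla|x|^\eta|=\eta|x|^{\eta-1}$ blows up there for $\eta<1$. Handle those terms with $|\mathbf{u}|\le\|\nabla\mathbf{u}\|_{L^\infty}|x|$, which follows from $\mathbf{u}(0)=0$ (Lemma \ref{dcx}). Also, the whole computation should be done with truncated weights, since finiteness of the weighted norms for $t>0$ is not known in advance.

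\textbf{In the bounded domain $B_R$, the compatibility is not ``supplied by $\mathbf{u}_0\in H^1_0\cap H^3$''.} The class \eqref{th24} contains $\mathbf{u}_t\in C([0,T];H^1(B_R))$, and $\mathbf{u}=0$ on $\partial B_R$ for every $t$, so $\mathbf{u}_t(0)$ has zero trace. Evaluating the momentum equation at $t=0$ then forces
\[
\Big[\div(\rho_0^\alpha\mathbb{D}\mathbf{u}_0)+(\alpha-1)\nabla(\rho_0^\alpha\div\mathbf{u}_0)-\nabla P(\rho_0)\Big]\Big|_{\partial B_R}=0.
\]
This fails, for example, for $\mathbf{u}_0\equiv0$ and a radial $\rho_0$ with $\rho_0'(R)\neq0$, even though such data satisfy \eqref{th22}. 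Hence Step 2 cannot deliver \eqref{th24} up to $t=0$ under \eqref{th22} alone. You must either add this first-order compatibility condition (conditions of this type are imposed in \cite{CK}) or weaken the regularity of $\mathbf{u}_t,\mathbf{u}_{tt}$ near $t=0$ by time weights. This defect is already present in the statement, which the paper does not prove but attributes to \cite{CK,LPZ,ZZ,Z}. For the Cauchy problem the issue does not arise.
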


The following lemma shows that if the initial data is spherically symmetric, then the local classical solution of (\ref{ns})--(\ref{cz}), (\ref{nxxs}) is also spherically symmetric and the velocity field vanishes at the origin.
\begin{lemma}\la{dcx}
Assume that the initial data are spherically symmetric.
Then the local classical solution $(\rho,\mathbf{u})$ of \eqref{ns}--\eqref{cz}, \eqref{nxxs} is also spherically symmetric.
Moreover, the velocity field vanishes at the origin, i.e., $\mathbf{u}(0)=0$, and the radial profile $u$ can be continuously extended to $r=0$ by setting $u(0)=0$.
\end{lemma}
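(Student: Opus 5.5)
The plan is to deduce spherical symmetry from the uniqueness part of Lemma \ref{lct} together with the rotational invariance of the system \eqref{ns}. Fix an arbitrary orthogonal matrix $\mathcal{O}\in O(N)$ and set
\[
\tilde\n(x,t)=\n(\mathcal{O}x,t),\qquad \tilde{\mathbf{u}}(x,t)=\mathcal{O}^{T}\mathbf{u}(\mathcal{O}x,t).
\]
The first step is to check that $(\tilde\n,\tilde{\mathbf{u}})$ solves \eqref{ns} with the same viscosities \eqref{nxxs}. This follows from the standard identities
\[
\div\tilde{\mathbf{u}}(x)=(\div\mathbf{u})(\mathcal{O}x),\qquad \na\tilde{\mathbf{u}}(x)=\mathcal{O}^T(\na\mathbf{u})(\mathcal{O}x)\mathcal{O},
\]
which imply $\mathbb{D}\tilde{\mathbf{u}}=\mathcal{O}^T(\mathbb{D}\mathbf{u})\mathcal{O}$, $\div(\mu(\tilde\n)\mathbb{D}\tilde{\mathbf{u}})=\mathcal{O}^T[\div(\mu(\n)\mathbb{D}\mathbf{u})](\mathcal{O}x)$, and similarly for the convective, pressure and $\na(\lambda\div\mathbf{u})$ terms. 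Since $\mu$ and $\lambda$ are functions of $\n$ only, they transform as scalars.

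The second step is to check the data and boundary conditions. Both $B_R$ and $\mathbb{R}^N$ are invariant under $\mathcal{O}$, the Dirichlet condition $\mathbf{u}=0$ on $\p B_R$ and the far-field condition \eqref{qkjbjtj} are preserved, and the regularity class \eqref{th13}--\eqref{th14} (or \eqref{th23}--\eqref{th24}) is preserved because $x\mapsto\mathcal{O}x$ is an isometry. For the initial data \eqref{cz},
\[
\tilde\n(x,0)=\n_0(|\mathcal{O}x|)=\n_0(x),\qquad
\tilde{\mathbf{u}}(x,0)=\mathcal{O}^Tu_0(|x|)\frac{\mathcal{O}x}{|x|}=\mathbf{u}_0(x).
\]
Uniqueness in Lemma \ref{lct} then gives $(\tilde\n,\tilde{\mathbf{u}})=(\n,\mathbf{u})$ on $\OM\times[0,T]$ for every $\mathcal{O}\in O(N)$.

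The third step, which I expect to need the most care, is to turn this invariance into the form \eqref{qdc}. Invariance of $\n$ under all rotations gives $\n=\n(|x|,t)$. For the velocity we have $\mathbf{u}(\mathcal{O}x)=\mathcal{O}\mathbf{u}(x)$ for all $\mathcal{O}$. Fix $x\neq0$ and take $\mathcal{O}$ to be any orthogonal map fixing $x$; then $\mathbf{u}(x)$ is fixed by every element of the stabilizer of $x$. This stabilizer acts on $x^\perp$ as $O(N-1)$, and for $N\ge2$ it includes reflections across hyperplanes containing $x$. A vector fixed by all of these must lie on the line spanned by $x$, so $\mathbf{u}(x)=u(x,t)\,x/|x|$. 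Applying a rotation that maps $x$ to any other point $x'$ with $|x'|=|x|$ shows that $u$ depends only on $r=|x|$ and $t$.

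Finally, for the origin, take $\mathcal{O}=-I$. This gives $\mathbf{u}(0)=-\mathbf{u}(0)$, hence $\mathbf{u}(0)=0$. Since $\mathbf{u}\in C([0,T];H^3)\hookrightarrow C(\overline{\OM}\times[0,T])$ (Remark \ref{lrk1}), we have $|u(r,t)|=|\mathbf{u}(x,t)|\to|\mathbf{u}(0,t)|=0$ as $r\to0^+$. Hence setting $u(0,t)=0$ extends the radial profile $u$ continuously to $r=0$.
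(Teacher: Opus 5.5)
Your proof is correct and uses the same core idea as the paper: the system is invariant under orthogonal changes of variables, so the uniqueness in Lemma~\ref{lct} forces $(\tilde\n,\tilde{\mathbf{u}})=(\n,\mathbf{u})$. The differences are in execution, and on one point your version is more complete.

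The paper works only with $Q\in SO(N)$ and then declares the solution spherically symmetric, using the form \eqref{qdc} afterwards without deriving it. In $N=3$ this causes no problem, because the $SO(2)$ stabilizer of $x$ already forces $\mathbf{u}(x)\parallel x$. In $N=2$, however, the stabilizer of $x\neq 0$ in $SO(2)$ is trivial. Any field $a(r,t)\frac{x}{r}+b(r,t)\frac{x^{\perp}}{r}$ is $SO(2)$-equivariant, since rotations commute with $x\mapsto x^{\perp}$. Your inclusion of reflections, together with the stabilizer argument in your third step, is exactly what removes the swirl component $b$ and yields \eqref{qdc}.

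At the origin the two routes also differ. The paper assumes the radial form and uses continuity, comparing $\mathbf{u}(re_1)$ with $\mathbf{u}(re_2)$ to get $u(r)\to 0$ and hence $\mathbf{u}(0)=0$. You obtain $\mathbf{u}(0)=0$ directly from $\mathcal{O}=-I$ and then use continuity only for the extension of $u$. Both arguments work. Note that $-I\notin SO(3)$, so in three dimensions this step again relies on your having allowed improper orthogonal maps; the paper's continuity argument does not need them.
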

\begin{proof}
Let $Q \in SO(N)$ be an arbitrary $N \times N$ orthogonal matrix.
Define
\be\la{}\ba
\rho_{Q}(x,t) \triangleq \rho(Qx,t), \quad 
\mathbf{u}_{Q}(x,t) \triangleq Q^{T} \mathbf{u}(Qx,t).
\ea\ee
Since the system \eqref{ns}--\eqref{cz}, \eqref{nxxs} is rotationally invariant and the initial data $(\rho_0,\mathbf{u}_0)$ are spherically symmetric, it is straightforward to verify that $(\rho_{Q},\mathbf{u}_{Q})$ is also a solution to \eqref{ns}--\eqref{cz}, \eqref{nxxs} with the initial data $(\rho_0,\mathbf{u}_0)$.
By the uniqueness of classical solutions to \eqref{ns}--\eqref{cz}, \eqref{nxxs}, we obtain
\be\nonumber
\rho_{Q} = \rho, \quad \mathbf{u}_{Q} = \mathbf{u}
\ee
Thus, the local classical solution $(\rho,\mathbf{u})$ is spherically symmetric.

Next, we show that $\mathbf{u}(0)=0$.
Let $e_1$ and $e_2$ be two distinct constant unit vectors.
Since $\mathbf{u}$ is spherically symmetric, we have for any $r>0$,
\be\nonumber\ba
\mathbf{u}(re_1) - \mathbf{u}(re_2) = u(r)e_1 - u(r)e_2 = u(r)(e_1 - e_2).
\ea\ee
By the continuity of $\mathbf{u}$ at the origin, we arrive at
\be\nonumber\ba
0 = \lim_{r \to 0^+} \left( \mathbf{u}(re_1) - \mathbf{u}(re_2) \right)
= \lim_{r \to 0^+} u(r)(e_1 - e_2),
\ea\ee
which together with the fact that $e_1 \ne e_2$ yields
\be\la{dcxp1}\ba
\lim_{r \to 0^+} u(r) = 0.
\ea\ee
This implies that
\be\nonumber\ba
\mathbf{u}(0) = \lim_{r \to 0^+} \mathbf{u}(r e_1) = \lim_{r \to 0^+} u(r) e_1 = 0.
\ea\ee
Finally, (\ref{dcxp1}) ensures that $u$ can be continuously extended to $r=0$ by setting $u(0)=0$.
\end{proof}

Next, we will frequently use the following Gagliardo-Nirenberg inequalities (see \cite{NI}).
\begin{lemma}\la{gn1}
Let $p, q, r \in [1,\infty]$ and $N, j, m \in \mathbb{N}$ satisfy
\be\nonumber\ba
\frac{1}{p} = \frac{j}{N} + a \left( \frac{1}{r} - \frac{m}{N} \right) + \frac{1-a}{q},
\ea\ee
where $a \in [\frac{j}{m},1]$.
In addition, if $m-j-\frac{N}{r}=0$, then it is necessary to also assume that $a<1$.
Then for $f \in L^q(\mathbb{R}^N) \cap D^{m,r}(\mathbb{R}^N)$, there exists a positive constant $C$ depending only on
$p$, $q$, $r$, $N$, $j$, $m$, and $a$ such that
\be\ba\la{gn11}
\| \na^j f \|_{L^p} \le C \| f \|^{1-a}_{L^q} \| \na^m f\|^{a}_{L^r}.
\ea\ee
\end{lemma}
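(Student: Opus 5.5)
The statement just above is Lemma \ref{gn1}, the classical Gagliardo--Nirenberg interpolation inequality on $\mathbb{R}^N$. The paper cites it from \cite{NI}; I would reproduce the standard argument in outline rather than rederive it in full.

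\emph{Scaling and reduction.} I would first fix the exponent relation by scaling. Replacing $f$ by $f(\lambda x)$ multiplies the three norms in \eqref{gn11} by the powers
\[
\lambda^{j-N/p},\qquad \lambda^{-N/q},\qquad \lambda^{m-N/r}.
\]
Homogeneity forces exactly the stated identity for $1/p$, so the inequality only has to be proved up to a constant. By density it suffices to take $f\in C_c^\infty(\mathbb{R}^N)$; in the endpoint cases $q=\infty$ or $r=\infty$ one uses a cutoff and mollification argument instead.

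\emph{The two endpoint cases.}
\begin{enumerate}
\item For $a=1$ (allowed when $m-j-N/r\neq 0$), the claim is the Sobolev embedding $\|\na^j f\|_{L^p}\le C\|\na^m f\|_{L^r}$. Here $1/p=j/N+1/r-m/N$, which I would obtain by iterating the Gagliardo--Nirenberg--Sobolev inequality $\|g\|_{L^{Nr/(N-r)}}\le C\|\na g\|_{L^r}$ for $r<N$. The cases $r>N$ go through Morrey's inequality.
\item For $a=j/m$, the claim is the pure interpolation inequality
\[
\|\na^j f\|_{L^p}\le C\|f\|_{L^q}^{1-j/m}\|\na^m f\|_{L^r}^{j/m}.
\]
I would reduce this to the case $j=1$, $m=2$, namely
\[
\|\na f\|_{L^p}^2\le C\|f\|_{L^q}\|\na^2 f\|_{L^r},\qquad \frac{2}{p}=\frac1q+\frac1r.
\]
This is proved by integrating by parts in $\int |\p_i f|^{p}\,dx=-\int f\,\p_i(|\p_i f|^{p-2}\p_i f)\,dx$ and applying H\"older. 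Induction on $m$ together with log-convexity then gives general $j,m$.
\end{enumerate}

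\emph{Intermediate $a$.} For $a\in(j/m,1)$ I would interpolate between the two endpoints. Write $\|\na^j f\|_{L^p}\le \|\na^j f\|_{L^{p_0}}^{\theta}\|\na^j f\|_{L^{p_1}}^{1-\theta}$ by H\"older, where $p_0$ and $p_1$ are the exponents at $a=j/m$ and $a=1$ respectively. Then bound each factor using the corresponding endpoint case, and check that the resulting exponents match via the affine dependence of $1/p$ on $a$.

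\emph{Main obstacle.} The delicate part is the critical case $m-j-N/r=0$ at $a=1$, which fails because $W^{N/r,r}\not\hookrightarrow L^\infty$. This is exactly why the lemma requires $a<1$ there. In that case I would avoid the endpoint $a=1$ and interpolate instead toward a point with $a$ close to $1$, using Sobolev embedding into every finite $L^s$. Cases where $p$, $q$ or $r$ equals $1$ or $\infty$ need separate but routine treatment, as in Nirenberg's original paper.
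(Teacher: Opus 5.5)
The paper gives no proof of this lemma; it only cites Nirenberg \cite{NI}. Your outline tries to reproduce that proof, but its interpolation scheme misses part of the parameter range, and the paper relies on that part. Your intermediate-$a$ step needs a Lebesgue endpoint at $a=1$, i.e.\ $\frac{1}{p_1}=\frac1r-\frac{m-j}{N}\ge 0$. When $m-j-\frac Nr>0$ this quantity is negative, so no such endpoint exists. In that regime $\frac1p$ decreases in $a$, and the admissible exponents are exactly $a\in[\frac jm,a^*]$, where
\[
a^*=\frac{\frac jN+\frac1q}{\frac mN-\frac1r+\frac1q}<1
\]
is the value at which $p=\infty$. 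H\"older's inequality between Lebesgue norms of $\na^j f$ then has nothing to interpolate toward. This is exactly the situation in \eqref{h33}. There the lemma is applied to $f=\na\mathbf u$ with $j=0$, $m=1$, $\|f\|_{L^2}$ as the lower-order norm, and $r$ equal to the paper's $q>N$. Thus $m-j-\frac Nr=1-\frac Nq>0$ and $p=\infty$.

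Your remark that ``the cases $r>N$ go through Morrey's inequality'' is attached to the wrong endpoint. At $a=1$ the hypotheses force $(m-j)r<N$, hence $r<N$. Moreover, Morrey gives a H\"older seminorm, not an $L^{p_1}$ norm that could enter $\|\na^jf\|_{L^{p_0}}^\theta\|\na^jf\|_{L^{p_1}}^{1-\theta}$. What you need is a separate $L^\infty$ endpoint at $a=a^*$. For example, when $0<\beta\triangleq m-j-\frac Nr<1$, averaging over $B_R(x)$ gives
\[
|\na^jf(x)|\le CR^{-N/p_0}\|\na^jf\|_{L^{p_0}}+CR^{\beta}[\na^jf]_{C^{0,\beta}}.
\]
Bound the first term by your $a=\frac jm$ endpoint and the second by Morrey, $[\na^jf]_{C^{0,\beta}}\le C\|\na^mf\|_{L^r}$. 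Then optimize in $R$. Only after this does H\"older interpolation between $a=\frac jm$ and $a=a^*$ finish the argument.

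Your treatment of the critical case $m-j-\frac Nr=0$ also does not work as stated. ``Sobolev embedding into every finite $L^s$'' is an inhomogeneous statement. Its homogeneous form $\|\na^jf\|_{L^s}\le C\|\na^mf\|_{L^r}$ is false for every $s<\infty$: under $f(\lambda x)$ the two sides scale like $\lambda^{j-N/s}$ and $\lambda^{m-N/r}=\lambda^{j}$. So there is no estimate ``at $a$ close to $1$'' to interpolate toward unless it already involves $\|f\|_{L^q}$, and that is the critical-case inequality itself. The classical remedy is Gagliardo's device. Apply $\|g\|_{L^{N/(N-1)}}\le C\|\na g\|_{L^1}$ to $g=|f|^{\gamma}$ and use H\"older:
\[
\|f\|_{L^{\gamma N/(N-1)}}^{\gamma}\le C\gamma\,\|f\|_{L^{(\gamma-1)r'}}^{\gamma-1}\|\na f\|_{L^r},
\]
where $r'$ is the conjugate exponent. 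This gives the $m=1$ inequalities uniformly, including $r=N$, and one then inducts on $m$. Ladyzhenskaya's $\|f\|_{L^4(\mathbb R^2)}^2\le C\|f\|_{L^2}\|\na f\|_{L^2}$, which the paper uses for $N=2$ in \eqref{h22} and \eqref{h43}, is an instance.

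Two smaller points. First, your integration-by-parts proof of the case $(j,m)=(1,2)$ needs $p\ge 2$, i.e.\ $\frac1q+\frac1r\le 1$; otherwise $|\p_if|^{p-2}$ cannot be placed in a Lebesgue space. Second, for $j=0$ and $q=\infty$ no cutoff or mollification argument can help. Constants lie in $L^\infty\cap D^{m,r}$ and violate \eqref{gn11} for every admissible $a>0$, so Nirenberg's decay hypothesis is needed there. This does not affect the paper, which only uses finite $q$.
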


The following one-dimensional Sobolev inequality will be frequently used.
\begin{lemma}\la{ywsi}
Let $f \in W^{1,1}(0,\infty)$.
Then for any $0 \le r < \infty$,
\be\la{ywsi1}\ba
|f(r)| \le \int_{r}^\infty |f'(s)| ds.
\ea\ee
Moreover, for any $0 \le r_1 < r < r_2 < \infty$,
\be\la{ywsi2}\ba
|f(r)| \le \frac{1}{r_2-r_1} \int_{r_1}^{r_2} |f(s)| ds
+ \int_{r_1}^{r_2} |f'(s)| ds.
\ea\ee
\end{lemma}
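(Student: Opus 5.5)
The statement to prove is Lemma \ref{ywsi}, the one-dimensional Sobolev inequality. Both parts follow from the fundamental theorem of calculus for absolutely continuous functions.

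First I use that $f \in W^{1,1}(0,\infty)$ has an absolutely continuous representative on every compact subinterval of $[0,\infty)$. For this representative,
\be\nonumber
f(b)-f(a)=\int_a^b f'(s)\,ds \quad \text{for all } 0\le a<b<\infty .
\ee
Since $f' \in L^1(0,\infty)$, the limit $\lim_{s\to\infty} f(s)$ exists: by the Cauchy criterion, $|f(b)-f(a)| \le \int_a^\infty |f'|\,ds \to 0$ as $a\to\infty$. Since also $f \in L^1(0,\infty)$, this limit must equal $0$; a nonzero limit $L$ would give $|f|\ge |L|/2$ on some half-line, contradicting integrability.

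For (\ref{ywsi1}), I write, for $r<b$,
\be\nonumber
f(r) = f(b)-\int_r^b f'(s)\,ds .
\ee
Letting $b\to\infty$ and using $f(b)\to 0$ gives $|f(r)|\le \int_r^\infty |f'(s)|\,ds$.

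For (\ref{ywsi2}), fix $0 \le r_1 < r < r_2$. For any $s\in(r_1,r_2)$, the identity $f(r)=f(s)+\int_s^r f'$ gives
\be\nonumber
|f(r)|\le |f(s)|+\int_{r_1}^{r_2}|f'(\sigma)|\,d\sigma .
\ee
I then average this inequality in $s$ over $(r_1,r_2)$, i.e. integrate in $s$ and divide by $r_2-r_1$. The left side is independent of $s$, so this yields exactly (\ref{ywsi2}).

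The only step needing any care is showing $f(s)\to 0$ at infinity, and that uses nothing beyond $f, f' \in L^1(0,\infty)$. Everything else is routine.
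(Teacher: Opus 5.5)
Your proposal is correct and follows the paper's proof essentially step by step. You use the absolutely continuous representative and the fundamental theorem of calculus, show $\lim_{s\to\infty} f(s)=0$ from $f,f'\in L^1(0,\infty)$, let $b\to\infty$ to get (\ref{ywsi1}), and average the pointwise bound over $(r_1,r_2)$ to get (\ref{ywsi2}).
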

\begin{proof}
First, since $f \in W^{1,1}(0,\infty)$, it follows from \cite{AF,EL} that $f \in C([0,\infty))$ and
\be\la{ywp1}\ba
f(r) = f(R) - \int_r^R f'(s) ds \quad \text{ for all } 0 \le r < R < \infty .
\ea\ee
This implies that, for any $0 \le R_1<R_2<\infty$,
\be\la{ywp2}\ba
|f(R_1) - f(R_2)| = \left| \int_{R_1}^{R_2} f'(s) ds \right|
\le \int_{R_1}^{R_2} |f'(s)| ds.
\ea\ee
Since $f' \in L^1(0,\infty)$, the right-hand side of (\ref{ywp2}) vanishes as $R_1, R_2 \to \infty$, and hence $\lim_{R \to \infty} f(R)$ exists.
Combining this with $f \in L^1(0,\infty)$ leads to
\be\la{ywp3}\ba
\lim_{R \to \infty} f(R) = 0.
\ea\ee
Letting $R \to \infty$ in (\ref{ywp1}), we obtain
\be\la{ywp4}\ba
f(r) = - \int_r^\infty f'(s) ds,
\ea\ee
which yields (\ref{ywsi1}).

Next, for any $r, \hat{r} \in (r_1,r_2)$, we deduce from (\ref{ywp1}) that
\be\la{ywp6}\ba
|f(r)| = \left| f(\hat{r}) - \int_{r}^{\hat{r}} f'(s) ds \right|
\le |f(\hat{r})| + \int_{r_1}^{r_2} |f'(s)| ds.
\ea\ee
Integrating (\ref{ywp6}) with respect to $\hat{r}$ over $(r_1,r_2)$ and dividing by $r_2-r_1$, we arrive at (\ref{ywsi2}).
This completes the proof of Lemma \ref{ywsi}.
\end{proof}

\section{Upper Bound and Lower Bound of the Density ($N=2$)}
In this section, we derive both the upper bound and the positive lower bound of the density for the case $\OM=\mathbb{R}^2$.
Assume that the initial data $(\n_0,\mathbf{u}_0)$ satisfy (\ref{th12}).
Let $(\n,\mathbf{u})$ be a spherically symmetric classical solution to (\ref{ns})--(\ref{qkjbjtj}), (\ref{nxxs}) on $\mathbb{R}^2 \times (0,T]$, whose existence is guaranteed by Lemmas \ref{lct} and \ref{dcx}.

First, we derive the standard energy estimate and the BD entropy estimate.
\begin{lemma}\la{2l1}
For any $p \in [2,\infty)$, there exists a positive constant $C$ depending only on
$T$, $p$, $\underline{\n_0}$, $\overline{\n_0}$, $\| \n_0 - \tilde{\n} \|_{H^1}$, $\| \mathbf{u}_0 \|_{L^2}$, $\ga$, $\tilde{\n}$, and $\alpha$ such that
\be\la{2101}\ba
& \sup_{0\le t \le T} \int_0^\infty \left( \n u^2 + |\p_r \n^{\alpha-\frac{1}{2}}|^2 + |\n - \tilde{\n}|^p \right) r dr \\
& + \int_0^T \int_0^\infty \left( \n^{\alpha} \frac{u^2}{r^2} + \n^{\alpha} |\p_r u|^2 + \n^{\ga-\alpha-1} |\p_r \n^\alpha|^2 \right) r dr dt
\le C.
\ea\ee
\end{lemma}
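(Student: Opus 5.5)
The plan is to combine the two classical estimates, the standard energy estimate and the BD entropy estimate, written in the radial variable. The $L^p$ bound on $\n-\tilde\n$ will then follow from the $L^2$ control of $\p_r\n^{\alpha-\frac12}$ via a Gagliardo--Nirenberg/Sobolev argument in $\mathbb{R}^2$.

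\emph{Step 1 (energy).} Multiply $(\ref{ns})_2$ by $\mathbf{u}$, use the continuity equation and the definition (\ref{pe}) of $K(\n)$, and integrate over $\mathbb{R}^2$. This gives
\be\nonumber\ba
\frac{d}{dt}\int \Big(\frac12\n|\mathbf{u}|^2+\frac{1}{\ga-1}K(\n)\Big)dx+\int \big(\mu(\n)|\mathbb{D}\mathbf{u}|^2+\lambda(\n)(\div\mathbf{u})^2\big)dx=0 .
\ea\ee
In radial form, with $\mu=\n^\alpha$ and $\lambda=(\alpha-1)\n^\alpha$, the dissipation reads
\be\nonumber
\n^\alpha\Big(u_r^2+\frac{u^2}{r^2}\Big)+(\alpha-1)\n^\alpha\Big(u_r+\frac ur\Big)^2 .
\ee
Since $\alpha>\frac12$, this is bounded below by $c\,\n^\alpha(u_r^2+u^2/r^2)$. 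Indeed, $a^2+b^2+(\alpha-1)(a+b)^2$ is positive definite iff $1+(\alpha-1)>0$ and $1+2(\alpha-1)>0$, i.e. $\alpha>\frac12$. The initial energy is finite by (\ref{th12}), because $K(\n_0)\le C|\n_0-\tilde\n|^2$ when $\n_0$ is bounded above and below.

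\emph{Step 2 (BD entropy).} Multiply the equation for the effective velocity (\ref{yxsdfc1}) by $w$ and integrate against $r\,dr$. Using mass conservation, this gives
\be\nonumber
\frac{d}{dt}\int \tfrac12\n w^2\,dx+\int w\,\p_r(\n^\ga)\,dx=0 .
\ee
Expanding $w=u+\n^{-1}\p_r\n^\alpha$, the term $\int u\,\p_r\n^\ga$ is handled by the energy identity from Step 1. The remaining term is
\be\nonumber
\int \n^{-1}\p_r\n^\alpha\,\p_r\n^\ga\,dx=\frac{\ga}{\alpha}\int \n^{\ga-\alpha-1}|\p_r\n^\alpha|^2\,dx\ge0 .
\ee
Since $\n^{-1}\p_r\n^\alpha=\frac{2\alpha}{2\alpha-1}\n^{-1/2}\p_r\n^{\alpha-\frac12}$, the term $\n w^2$ controls $|\p_r\n^{\alpha-\frac12}|^2$ up to $\n u^2$. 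The initial BD quantity is finite because $\n_0$ is bounded below and $\na\n_0\in L^2$. Adding Steps 1 and 2 yields every term in (\ref{2101}) except the $L^p$ one. For $p=2$, the bound on $\int K(\n)$ controls $\int|\n-\tilde\n|^2$ on $\{\n\le2\tilde\n\}$ and $\int|\n-\tilde\n|^\ga$ on $\{\n>2\tilde\n\}$.

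\emph{Step 3 ($L^p$ bound).} This is where I expect the main obstacle: the large-density region, since no upper bound on $\n$ is yet available. Set $f=(\n^{\alpha-\frac12}-\tilde\n^{\alpha-\frac12})$, which vanishes at infinity. From Step 2, $\na f\in L^2(\mathbb{R}^2)$. Also $|f|^2\le C|\n-\tilde\n|^2$ where $\n\le2\tilde\n$, and $|f|^2\le C\n^{2\alpha-1}\le C K(\n)$ where $\n>2\tilde\n$, because $2\alpha-1<1<\ga$. Hence $f\in L^2$ uniformly in time. The two-dimensional Gagliardo--Nirenberg inequality (Lemma \ref{gn1}) then gives $f\in L^q$ for every $q<\infty$. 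Since $\n\le C(1+|f|^{1/(\alpha-\frac12)})$, we get $\int_{\{\n>2\tilde\n\}}\n^p\,dx\le C\int|f|^{p/(\alpha-1/2)}\,dx\le C(p)$. On the set $\{\n\le2\tilde\n\}$, interpolate $|\n-\tilde\n|^p\le C|\n-\tilde\n|^2$. Combining the two regions gives the $L^p$ bound, and a Gronwall argument is not even needed.
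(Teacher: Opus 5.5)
Your proposal is correct and follows essentially the same route as the paper. The steps match: the energy estimate (your observation that the radial dissipation form is positive definite with smallest eigenvalue $2\alpha-1$ is a cleaner version of the paper's Young-inequality absorption, which also uses $\alpha>\frac12$), then the BD entropy estimate via the effective velocity $w$, then the $H^1(\mathbb{R}^2)$ Sobolev bound on $\n^{\alpha-\frac12}-\tilde{\n}^{\alpha-\frac12}$ with the split into $\{\n<2\tilde{\n}\}$ and $\{\n\ge 2\tilde{\n}\}$. The only slip is cosmetic: with the paper's normalization (\ref{pe}) of $K$, the energy density is $\frac12\n|\mathbf{u}|^2+K(\n)$, without your extra factor $\frac{1}{\ga-1}$.
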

\begin{proof}
First, multiplying $(\ref{nsqdc})_2$ by $r u$ and integrating by parts over $(0,\infty)$, we derive
\be\la{211}\ba
& \frac{1}{2} \frac{d}{dt} \int_0^\infty \n u^2 r dr
+ \alpha \int_0^\infty \n^{\alpha} \frac{u^2}{r} dr
+ \alpha \int_0^\infty \n^{\alpha} |\p_r u|^2 r dr \\
& = 2(1-\alpha) \int_0^\infty \n^\alpha u (\p_r u) dr
- \int_0^\infty \p_r(\n^\ga) u r dr.
\ea\ee
Note that $K(\n)$ satisfies
\be\la{212}\ba
\frac{d}{dt} \int_0^\infty K(\n) r dr - \int_0^\infty \p_r(\n^\ga) u r dr = 0.
\ea\ee
Combining \eqref{211} with \eqref{212} yields
\be\la{213}\ba
& \frac{d}{dt} \int_0^\infty \left( \frac{1}{2} \n u^2 + K(\n) \right) r dr
+ \alpha \int_0^\infty \n^{\alpha} \frac{u^2}{r} dr
+ \alpha \int_0^\infty \n^{\alpha} |\p_r u|^2 r dr \\
& = 2(1-\alpha) \int_0^\infty \n^\alpha u (\p_r u) dr.
\ea\ee
It follows from Young's inequality that for any $0<\ep<\alpha$,
\be\la{214}\ba
2(1-\alpha) \int_0^\infty \n^\alpha u (\p_r u) dr
\le \frac{(1-\alpha)^2}{\alpha-\ep} \int_0^\infty \n^{\alpha} \frac{u^2}{r} dr
+ (\alpha-\ep) \int_0^\infty \n^{\alpha} |\p_r u|^2 r dr.
\ea\ee
Using the fact that $\alpha>\frac{1}{2}$, we can choose sufficiently small $0<\ep<\alpha$ such that
\be\nonumber
\frac{(1-\alpha)^2}{\alpha-\ep}<\alpha-\ep.
\ee
For such $\ep$, substituting (\ref{214}) into (\ref{213}) and integrating over $(0,T)$ lead to
\be\la{215}\ba
\sup_{0\le t \le T} \int_0^\infty \left( \n u^2 + K(\n) \right) r dr
+ \int_0^T \int_0^\infty \left( \n^{\alpha} \frac{u^2}{r^2} + \n^{\alpha} |\p_r u|^2 \right) r dr dt
\le C.
\ea\ee
From \eqref{pe}, we conclude that there is a positive constant $C$ depending only on $\tilde{\n}$ and $\ga$, such that
\be\la{216}\ba
(\n - \tilde{\n})^2 \le C K(\n), \text{ if } \n < 2 \tilde{\n}, \quad
(\n - \tilde{\n})^\ga \le C K(\n), \text{ if } \n \ge 2 \tilde{\n},
\ea\ee
which together with (\ref{215}) gives
\be\la{217}\ba
\int_{( \n < 2 \tilde{\n} )} |\n - \tilde{\n}|^2 r dr
+ \int_{( \n \ge 2 \tilde{\n} )} |\n - \tilde{\n}|^\ga r dr \le C.
\ea\ee
By virtue of the condition $2\alpha-1<1<\ga$, we have
\be\la{218}\ba
\int_0^\infty |\n^{\alpha-\frac{1}{2}} - \tilde{\n}^{\alpha-\frac{1}{2}}|^2 r dr
& = \int_{( \n < 2 \tilde{\n} )} |\n^{\alpha-\frac{1}{2}} - \tilde{\n}^{\alpha-\frac{1}{2}}|^2 r dr
+ \int_{( \n \ge 2 \tilde{\n} )} |\n^{\alpha-\frac{1}{2}} - \tilde{\n}^{\alpha-\frac{1}{2}}|^2 r dr \\
& \le C \int_{( \n < 2 \tilde{\n} )} |\n - \tilde{\n}|^2 r dr
+ C \int_{( \n \ge 2 \tilde{\n} )} |\n - \tilde{\n}|^\ga r dr \\
& \le C,
\ea\ee
which shows that
\be\la{219}\ba
\sup_{0 \le t \le T} \int_0^\infty |\n^{\alpha-\frac{1}{2}} - \tilde{\n}^{\alpha-\frac{1}{2}}|^2 r dr \le C.
\ea\ee

In addition, multiplying $(\ref{yxsdfc1})$ by $w r$, integrating by parts over $(0,\infty)$, and using (\ref{212}), we obtain
\be\la{2110}\ba
\frac{1}{2} \frac{d}{d t} \int_0^\infty \n w^2 r dr
& = - \int_0^\infty \left( \p_r(\n^\ga) u r + \p_r(\n^\ga) \n^{-1} \p_r(\n^\alpha) r \right) dr \\
& = - \frac{d}{dt} \int_0^\infty K(\n) r dr
- \frac{\ga}{\alpha} \int_0^\infty \n^{\ga-\alpha-1} |\p_r \n^\alpha|^2 r dr,
\ea\ee
which yields
\be\la{2111}\ba
\sup_{0 \le t \le T} \int_0^\infty \left( \n w^2 + K(\n) \right) r dr
+ \int_0^T \int_0^\infty \n^{\ga-\alpha-1} |\p_r \n^\alpha|^2 r dr dt \le C.
\ea\ee
In view of (\ref{215}), (\ref{2111}), and Cauchy's inequality, we arrive at
\be\la{2112}\ba
\int_0^\infty |\p_r \n^{\alpha-\frac{1}{2}}|^2 r dr
\le C \int_0^\infty \n \left( u^2 + w^2 \right) r dr \le C.
\ea\ee
It follows from (\ref{218}), (\ref{2112}), and the Sobolev inequality that for any $s \in [2,\infty)$,
\be\la{2113}\ba
\| \n^{\alpha-\frac{1}{2}} - \tilde{\n}^{\alpha-\frac{1}{2}} \|_{L^s(\rr)}^2
& \le C \| \n^{\alpha-\frac{1}{2}} - \tilde{\n}^{\alpha-\frac{1}{2}} \|_{H^1(\rr)}^2 \\
& \le C \int_0^\infty |\n^{\alpha-\frac{1}{2}} - \tilde{\n}^{\alpha-\frac{1}{2}}|^2 r dr
+ C \int_0^\infty |\p_r \n^{\alpha-\frac{1}{2}}|^2 r dr \\
& \le C.
\ea\ee
For any $p \in [2,\infty)$, we use (\ref{2113}) and the fact that $\frac{1}{2}<\alpha<1$ to obtain
\be\la{2114}\ba
\int_0^\infty |\n - \tilde{\n}|^p r dr
& = \int_{( \n < 2 \tilde{\n} )} |\n - \tilde{\n}|^p r dr
+ \int_{( \n \ge 2 \tilde{\n} )} |\n - \tilde{\n}|^p r dr \\
& \le C \int_{( \n < 2 \tilde{\n} )} |\n^{\alpha-\frac{1}{2}} - \tilde{\n}^{\alpha-\frac{1}{2}}|^p r dr
+ C \int_{( \n \ge 2 \tilde{\n} )} |\n^{\alpha-\frac{1}{2}} - \tilde{\n}^{\alpha-\frac{1}{2}}|^{\frac{2p}{2\alpha-1}} r dr \\
& \le C.
\ea\ee
From (\ref{215}), (\ref{2111}), and (\ref{2114}), we conclude (\ref{2101}) holds and thus complete the proof of Lemma \ref{2l1}.
\end{proof}

\begin{lemma}\la{2l2}
For any $\xi>0$, there exists a positive constant $C$ depending only on
$T$, $\xi$, $\underline{\n_0}$, $\overline{\n_0}$, $\| \n_0 - \tilde{\n} \|_{H^1}$, $\| \mathbf{u}_0 \|_{L^2}$, $\ga$, $\tilde{\n}$, and $\alpha$ such that
\be\la{202}\ba
\sup_{0 \le t \le T} \left( \| \n r^{\xi} \|_{L^\infty(0,1)} + \| \n \|_{L^\infty(1,\infty)} \right) \le C.
\ea\ee
\end{lemma}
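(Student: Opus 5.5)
We derive both bounds from Lemma \ref{2l1} by applying the one-dimensional Sobolev inequality of Lemma \ref{ywsi} to a suitable power of $\n$, multiplied by a weight when we work near the origin. Set $\beta \triangleq \alpha-\frac12\in(0,\frac12)$ and $g\triangleq \n^{\beta}-\tilde\n^{\beta}$. Lemma \ref{2l1} together with \eqref{219} gives
\begin{equation}\nonumber
\sup_{0\le t\le T}\int_0^\infty \left(g^2+|\p_r g|^2\right) r\,dr\le C .
\end{equation}

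\emph{Far field.} For $r\ge1$ we apply \eqref{ywsi1} to $g^2$, cut off at infinity. Note that $g^2$ is integrable against $r\,dr$, and $\int_1^\infty |\p_r(g^2)|\,dr \le 2\int_1^\infty |g||\p_r g|\,r\,dr\le C$ because $r\ge1$. To make this rigorous, we take $\phi$ smooth with $\phi=0$ on $[0,\frac12]$ and $\phi=1$ on $[1,\infty)$, and apply \eqref{ywsi1} to $f=\phi g^2$. The contribution where $\phi'\neq0$ is bounded by $\int_{1/2}^1 g^2\,dr\le 2\int g^2 r\,dr$. This yields $\|g\|_{L^\infty(1,\infty)}\le C$, hence $\n^\beta\le C$ there, and since $\beta>0$ we get $\|\n\|_{L^\infty(1,\infty)}\le C$.

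\emph{Near the origin.} We fix $\xi>0$ and write $\theta=\xi\beta$, so that it suffices to bound $h\triangleq\n^{\beta} r^{\theta}$ on $(0,1)$; indeed $\n r^\xi=h^{1/\beta}$. We apply \eqref{ywsi2} on $(r_1,r_2)=(0,1)$, or first to $h\phi_1$ with a cutoff $\phi_1$ supported in $[0,2)$ and using the bound already obtained on $(1,2)$. This reduces the problem to bounding $\int_0^1 h\,dr$ and $\int_0^1|\p_r h|\,dr$. The first integral is controlled by $\int_0^1 (|g|+C)\,dr$. For $|g|$ we use Cauchy--Schwarz with the weight $r$:
\begin{equation}\nonumber
\int_0^1|g|\,dr\le \Big(\int_0^1 g^2 r\,dr\Big)^{1/2}\Big(\int_0^1 r^{-1}\,dr\Big)^{1/2},
\end{equation}
but this diverges. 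So we keep the weight and instead estimate $\int_0^1 |g| r^\theta\,dr\le (\int g^2 r)^{1/2}(\int_0^1 r^{2\theta-1})^{1/2}\le C(\theta)$. For the derivative,
\begin{equation}\nonumber
|\p_r h|\le |\p_r g|\, r^{\theta}+\theta\,\n^\beta r^{\theta-1},
\end{equation}
and the first term satisfies $\int_0^1|\p_r g| r^\theta\,dr\le (\int|\p_r g|^2r)^{1/2}(\int_0^1 r^{2\theta-1})^{1/2}\le C(\theta)$.

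\emph{Main obstacle.} The main obstacle is the term $\theta\int_0^1 \n^\beta r^{\theta-1}\,dr$, which involves $\n^\beta$ without derivative and a singular weight. We split $\n^\beta\le |g|+\tilde\n^\beta$. The constant part integrates since $\theta>0$. For the $|g|$ part we use Cauchy--Schwarz, $\int_0^1|g|r^{\theta-1}\le(\int g^2 r)^{1/2}(\int_0^1 r^{2\theta-3})^{1/2}$, which fails for small $\theta$. We therefore avoid it by a different route: we write $h=\n^\beta r^\theta$ and bound the product $(h-\tilde\n^\beta r^\theta)=g r^\theta$ directly. The pointwise bound for $g r^\theta$ follows by applying \eqref{ywsi1}-type reasoning to $G(r)=\int_r^1|\p_r g|\,ds$. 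Indeed $|g(r)|\le |g(1)|+\int_r^1|\p_s g|\,ds\le C+(\int|\p_sg|^2 s\,ds)^{1/2}(\log\frac1r)^{1/2}$, where $|g(1)|\le C$ by the far-field step. Hence $|g(r)|\le C(1+\log^{1/2}\frac1r)$, so $g r^\theta\le C(\theta)$ on $(0,1)$ for every $\theta>0$. This logarithmic estimate replaces the splitting above and gives $\n^\beta r^\theta\le C$, hence $\n r^{\xi}\le C(\xi)$ uniformly on $[0,T]$, which is \eqref{202}.
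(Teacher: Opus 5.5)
Your proof is correct, but near the origin it takes a different route from the paper. The paper applies \eqref{ywsi2} on $(0,1)$ to the weighted function $\n^{\alpha-\frac12}r^{\zeta}$, $0<\zeta\le 2\alpha-1$. It handles exactly the term you flagged as the obstacle, $\int_0^1\n^{\alpha-\frac12}r^{\zeta-1}\,dr$, by H\"older's inequality with exponent $4/\zeta$, which reduces it to $\int_0^1\n^{\frac{4\alpha-2}{\zeta}}r\,dr$. That integral is finite by the bound on $\int|\n-\tilde{\n}|^p r\,dr$ for every finite $p$ in Lemma \ref{2l1}, which itself comes from the two-dimensional embedding $H^1\hookrightarrow L^s$. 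So your abandoned first attempt could have been rescued this way.

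Your final argument instead integrates $\p_r g$ inward from $r=1$ and applies Cauchy--Schwarz against the weight $s$, giving $|g(r)|\le C\big(1+\log^{1/2}\frac1r\big)$. This is the standard logarithmic bound for radial $H^1(\mathbb{R}^2)$ functions. It needs only $\int_0^\infty(g^2+|\p_r g|^2)r\,dr\le C$ and the value $|g(1)|$ from your far-field step, not the $L^p$ family. It also gives the slightly sharper explicit bound $\n(r,t)\le C\big(1+\log\frac1r\big)^{\frac{1}{2\alpha-1}}$ on $(0,1)$, which yields the weighted estimate for every $\xi>0$ at once. The paper's version, by contrast, follows the same \eqref{ywsi2} template it reuses in three dimensions (Lemma \ref{3l2}).

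In the far field you also differ mildly. The paper applies \eqref{ywsi1} on $(1,\infty)$ to $(\n-\tilde{\n})^2$ and uses the $L^2$ and $L^{5-2\alpha}$ bounds. You apply it to $\phi g^2$, which uses only the $H^1$-type bound on $g$. Both work.

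When you write this up, delete the false starts: the divergent Cauchy--Schwarz estimates and the \eqref{ywsi2} reduction. The logarithmic estimate together with the far-field step is self-contained.
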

\begin{proof}
First, it follows from (\ref{ywsi2}), (\ref{2101}), and H\"older's inequality that for any $0 < \zeta \le 2\alpha-1$,
\be\la{221}\ba
& \| \n^{\alpha-\frac{1}{2}} r^{\zeta} \|_{L^\infty(0,1)} \\
& \le C \int_0^1 \n^{\alpha-\frac{1}{2}} r^{\zeta} dr
+ C \int_0^1 \left( |\p_r \n^{\alpha-\frac{1}{2}}| r^{\zeta} + \n^{\alpha-\frac{1}{2}} r^{\zeta-1} \right) dr \\
& \le C \int_0^1 \n^{\alpha-\frac{1}{2}} r^{\zeta-1} dr
+ C \int_0^1 |\p_r \n^{\alpha-\frac{1}{2}}| r^{\zeta} dr \\
& \le C \left( \int_0^1 \n^{\frac{4\alpha-2}{\zeta}} r dr \right)^{\frac{\zeta}{4}}
\left( \int_0^1 r^{-\frac{4-3\zeta}{4-\zeta}} dr \right)^{\frac{4-\zeta}{4}}
+ C \left( \int_0^1 |\p_r \n^{\alpha-\frac{1}{2}}|^2 r dr \right)^{\frac{1}{2}}
\left( \int_0^1 r^{2\zeta-1} dr \right)^{\frac{1}{2}} \\
& \le C + C \int_0^1 \n^{\frac{4\alpha-2}{\zeta}} r dr \\
& \le C + C \int_0^1 |\n-\tilde{\n}|^{\frac{4\alpha-2}{\zeta}} r dr + C \int_0^1 r dr  \\
& \le C,
\ea\ee
due to $2 \le \frac{4\alpha-2}{\zeta}<\infty$.

Moreover, for any $\zeta>2\alpha-1$, we deduce from (\ref{221}) that
\be\la{22101}\ba
& \| \n^{\alpha-\frac{1}{2}} r^{\zeta} \|_{L^\infty(0,1)}
\le C \| \n^{\alpha-\frac{1}{2}} r^{2\alpha-1} \|_{L^\infty(0,1)}
\le C.
\ea\ee
Hence, for any $\zeta>0$, we have
\be\la{20102}\ba
\sup_{0 \le t \le T} \| \n^{\alpha-\frac{1}{2}} r^{\zeta} \|_{L^\infty(0,1)} \le C.
\ea\ee
For any $\xi>0$, we choose $\zeta=(\alpha-\frac{1}{2}) \xi$ in (\ref{20102}) to obtain
\be\la{22103}\ba
\sup_{0 \le t \le T} \| \n r^{\xi} \|_{L^\infty(0,1)} \le C.
\ea\ee
On the other hand, we use (\ref{ywsi1}), (\ref{2101}), and H\"older's inequality to derive
\be\nonumber\ba
\| (\n - \tilde{\n})^2 \|_{L^\infty(1,\infty)}
& \le \int_1^\infty | \p_r(\n - \tilde{\n})^2 | dr \\
& \le C \int_1^\infty |\n - \tilde{\n}| \n^{\frac{3}{2}-\alpha} |\p_r \n^{\alpha-\frac{1}{2}}| dr \\
& \le C \int_1^\infty |\n - \tilde{\n}|^2 \n^{3-2\alpha} dr
+ C \int_1^\infty |\p_r \n^{\alpha-\frac{1}{2}}|^2 dr \\
& \le C \int_1^\infty \chi_{(0 < \n < 2\tilde{\n})} |\n - \tilde{\n}|^2 \n^{3-2\alpha} dr
+ C \int_1^\infty \chi_{(\n \ge 2\tilde{\n})} |\n - \tilde{\n}|^2 \n^{3-2\alpha} dr + C \\
& \le C \int_1^\infty |\n - \tilde{\n}|^2 r dr
+ C \int_1^\infty |\n - \tilde{\n}|^{5-2\alpha} r dr + C \\
& \le C,
\ea\ee
which together with Cauchy's inequality yields
\be\la{223}\ba
\| \n \|^2_{L^\infty(1,\infty)}
\le 2 \| \n - \tilde{\n} \|^2_{L^\infty(1,\infty)} + 2 \tilde{\n}^2 \le C.
\ea\ee
Combining (\ref{22103}) with (\ref{223}) gives (\ref{202}) and completes the proof of Lemma \ref{2l2}.
\end{proof}

To obtain the upper bound for $\n$, we need extra integrability of $\n u$ and $\n w$.
For $k \in (2,\infty)$, define
\be\nonumber
f(k) \triangleq 1 - \frac{2k\sqrt{k-1}-4k+4}{(k-2)^2}.
\ee
It is straightforward to verify that $f(k)$ is strictly increasing on $(2,\infty)$ and satisfies
\be\nonumber
\lim_{k \to 2^+} f(k) = \frac{1}{2}, \quad \lim_{k \to \infty} f(k) = 1.
\ee
Let $\varphi(z) : (\frac{1}{2},1) \to (2,\infty)$ denote the inverse function of $f(k)$.
Then $\varphi(z)$ is strictly increasing on $(\frac{1}{2},1)$ and satisfies
\be\nonumber
\lim_{z \to \frac{1}{2}^+} \varphi(z) = 2, \quad \lim_{z \to 1^-} \varphi(z) = \infty.
\ee

Next, we derive the extra integrability of the momentum $\n u$.

\begin{lemma}\la{2l3}
Assume that \eqref{th11} holds.
For any $k \in [3,\varphi(\alpha))$, there exists a positive constant $C$ depending only on
$T$, $k$, $\underline{\n_0}$, $\overline{\n_0}$, $\| \n_0 - \tilde{\n} \|_{H^1}$, $\| \mathbf{u}_0 \|_{H^1}$, $\ga$, $\tilde{\n}$, and $\alpha$ such that
\be\la{203}\ba
\sup_{0\le t \le T} \int_0^\infty \n |u|^{k} r dr
+ \int_0^T \int_0^\infty \left( \n^\alpha \frac{|u|^{k}}{r} + \n^\alpha |\p_r u|^2 |u|^{k-2} r \right) dr dt
\le C.
\ea\ee
\end{lemma}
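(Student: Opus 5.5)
The plan is to take the $L^k$-energy estimate for the momentum equation $(\ref{nsqdc})_2$ with $N=2$. I multiply it by $|u|^{k-2}u\,r$ and integrate over $(0,\infty)$. The boundary terms at $r=0$ vanish because $u(0)=0$ (Lemma \ref{dcx}), and those at infinity vanish by the decay in (\ref{th14}). Using the continuity equation, the transport part gives
\be\nonumber
\frac{1}{k}\frac{d}{dt}\int_0^\infty \n|u|^k r\,dr .
\ee
For the viscous term $-\alpha\big(r^{-1}\n^\alpha(ru)_r\big)_r$, integration by parts with $\p_r(|u|^{k-2}u\,r)=(k-1)|u|^{k-2}u_r r+|u|^{k-2}u$ produces
\be\nonumber
\alpha\int_0^\infty \n^\alpha |u|^{k-2}\Big((k-1)u_r^2 r + k\,u u_r + \frac{u^2}{r}\Big)dr .
\ee
The geometric term $\frac{1}{r}(\n^\alpha)_r u$ contributes $\int_0^\infty (\n^\alpha)_r|u|^k dr=-k\int_0^\infty \n^\alpha|u|^{k-2}u u_r\,dr$.

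Hence, writing $a=|u|^{\frac{k-2}{2}}u_r\sqrt r$ and $b=|u|^{\frac{k-2}{2}}u/\sqrt r$, the dissipation is $\int_0^\infty \n^\alpha Q(a,b)\,dr$ with
\be\nonumber
Q(a,b)=\alpha(k-1)a^2-(1-\alpha)k\,ab+\alpha b^2 .
\ee
This form is positive definite exactly when $(1-\alpha)^2k^2<4\alpha^2(k-1)$, i.e. when $\alpha>\frac{k}{k+2\sqrt{k-1}}$. A direct computation shows $\frac{k}{k+2\sqrt{k-1}}=\frac{k^2-2k\sqrt{k-1}}{(k-2)^2}=f(k)$. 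So $k<\varphi(\alpha)$ is precisely what gives, for some $c_0=c_0(k,\alpha)>0$,
\be\nonumber
\int_0^\infty \n^\alpha Q\,dr\ge c_0\int_0^\infty \n^\alpha\Big(|u|^{k-2}u_r^2 r+\frac{|u|^k}{r}\Big)dr .
\ee
This explains the threshold in the statement. The restriction $k\ge3$ and (\ref{th11}) only ensure that the interval $[3,\varphi(\alpha))$ is nonempty, since $f(3)\approx0.5359<0.54369$.

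The remaining term is the pressure, which I expect to be the main obstacle. I would integrate it by parts as
\be\nonumber
\int_0^\infty(\n^\ga-\tilde\n^\ga)\big((k-1)|u|^{k-2}u_r r+|u|^{k-2}u\big)dr .
\ee
By Young's inequality this is at most $\frac{c_0}{2}$ times the dissipation plus $C\int_0^\infty \n^{-\alpha}|\n^\ga-\tilde\n^\ga|^2|u|^{k-2}r\,dr$.

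I split the last integral by Young with exponents $\frac{k}{k-2},\frac k2$ as
\be\nonumber
\int_0^\infty \n|u|^k r\,dr+\int_0^\infty \n^{-\alpha k/2-(k-2)/2}\,|\n^\ga-\tilde\n^\ga|^{k}\,r\,dr .
\ee
On the set $\{\n\ge\tilde\n/2\}$, the second integrand is bounded by $C|\n-\tilde\n|^2(1+\n^{M})$ for some finite $M$. By Lemma \ref{2l2}, $\n\le Cr^{-\xi}$ on $(0,1)$ with $\xi$ arbitrarily small and $\n\le C$ on $(1,\infty)$. Together with the $L^p$ bounds of $\n-\tilde\n$ with weight $r$ from Lemma \ref{2l1}, this part is bounded by $C$ via H\"older's inequality.

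The set $\{\n<\tilde\n/2\}$ is the delicate point, since a lower bound for $\n$ is not yet available. There I would not use Young in this form. Instead I would keep the pressure in the form $-\int_0^\infty(\n^\ga)_r|u|^{k-2}u\,r\,dr$ with $|(\n^\ga)_r|=\frac{\ga}{\alpha}\n^{\ga-\alpha}|\p_r\n^\alpha|$, and use Cauchy's inequality against the dissipation $\int_0^T\!\!\int_0^\infty \n^{\ga-\alpha-1}|\p_r\n^\alpha|^2 r\,dr\,dt$ from Lemma \ref{2l1}. This gives
\be\nonumber
C\int_0^T\!\!\int_0^\infty \n^{\ga-\alpha+1}|u|^{2k-2}r\,dr\,dt .
\ee
On $\{\n<\tilde\n/2\}$ the factor $\n^{\ga-\alpha}$ is bounded, so this is controlled by $\sup_r(|u|^{k-2})\int_0^\infty\n|u|^k r\,dr$. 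I would bound $\|u\|_{L^\infty}$ using (\ref{ywsi1}) and the $L^2_tH^1$-type bounds on $u$, which come from the energy dissipation in Lemma \ref{2l1} and the $\|\mathbf u_0\|_{H^1}$ dependence. If that fails, I would instead absorb into $\n^\alpha|u|^k/r$ using the weighted bound $\n r^\xi\le C$.

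Once the pressure term is controlled, Gronwall's inequality yields (\ref{203}). The initial value $\int_0^\infty \n_0|u_0|^k r\,dr$ is finite by $\mathbf u_0\in H^1\subset L^k$.
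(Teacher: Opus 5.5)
\textbf{There is a genuine gap: the pressure term on the low-density set is not closed.}

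Your transport and viscous computations are correct and coincide with the paper's. Positivity of $Q$ is exactly the paper's condition $k^2(1-\alpha)^2<4(k-1)\alpha^2$, and the identity $f(k)=\frac{k}{k+2\sqrt{k-1}}$ is right. One small slip: $f(3)=9-6\sqrt{2}\approx 0.5147$, not $0.5359$.

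The gap is the pressure term on $\{\rho<\tilde\rho/2\}$, and it is caused by subtracting $\tilde\rho^\gamma$. Young's inequality then produces $\rho^{-\alpha}|\rho^\gamma-\tilde\rho^\gamma|^2|u|^{k-2}r$. On that set the factor is comparable to $\rho^{-\alpha}$, which is unbounded, and no lower bound for $\rho$ is available yet. Neither of your remedies works:
\begin{itemize}
\item Pairing $(\rho^\gamma)_r$ with the BD dissipation leaves $\int_0^T\!\int_0^\infty\rho^{\gamma-\alpha+1}|u|^{2k-2}r\,dr\,dt$, so you would need $\int_0^T\|u\|_{L^\infty}^{k-2}dt<\infty$. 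At this stage the only control of $\partial_r u$ and $u/r$ is $\int_0^T\!\int_0^\infty\rho^\alpha(|\partial_r u|^2+u^2/r^2)r\,dr\,dt\le C$. Turning this, via (\ref{ywsi1}), into a bound on $\|u\|_{L^\infty}$ or $\|\nabla\mathbf u\|_{L^2}$ requires $\rho^{-\alpha}\in L^\infty$. That is the lower bound of Lemma \ref{2l6}, whose proof itself relies on (\ref{203}) through (\ref{204}), so the argument is circular.
\item The $H^1$ regularity of $\mathbf u_0$ does not propagate by itself either; Lemma \ref{hl1} uses both density bounds.
\item Absorbing into $\rho^\alpha|u|^k/r$ cannot work, because the power $2k-2$ exceeds $k$.
\item Separately, switching back to the non-integrated form only on a density level set would need a smooth cutoff in $\rho$.
\end{itemize}

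\textbf{The fix: do not subtract $\tilde\rho^\gamma$, and split in $r$ rather than by density level.} Integrate by parts with $\rho^\gamma$ itself; the boundary term $\rho^\gamma|u|^{k-1}r$ vanishes at infinity for $k\ge 3$. The pressure then contributes $(k-1)\int_0^\infty\rho^\gamma|u|^{k-2}\partial_r u\, r\,dr+\int_0^\infty\rho^\gamma|u|^{k-2}u\,dr$, which you split at $r=1$.
\begin{itemize}
\item On $(0,1)$, Young's inequality (used twice) gives a small multiple of the dissipation plus $C\int_0^1\rho^{k(\gamma-\alpha)+\alpha}r^{k-1}dr$. This is at most $C\|\rho\, r^{1/(k(\gamma-\alpha)+\alpha)}\|_{L^\infty(0,1)}^{k(\gamma-\alpha)+\alpha}$, which Lemma \ref{2l2} bounds. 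Only positive powers of $\rho$ appear, so vacuum is harmless.
\item On $(1,\infty)$, the bound $\rho\le C$ gives $\rho^\gamma+\rho^{2\gamma-\alpha}\le C\rho$ and $\rho^\gamma\le C\rho^\alpha$. Use also $r\ge 1$ and $|u|^{k-1}\le|u|^2+|u|^k$.
\item For the term containing $\partial_r u$ on $(1,\infty)$: if $3\le k\le 4$, apply Cauchy's inequality to get $|u|^{2k-4}+|\partial_r u|^2$. If $k>4$, apply Young against the dissipation and then $|u|^{k-2}\le|u|^2+|u|^k$.
\end{itemize}
Everything then reduces to the quantities of Lemma \ref{2l1} and $\int_0^\infty\rho|u|^k r\,dr$, and Gronwall's inequality gives (\ref{203}).

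Integrability at infinity thus comes from $\int_0^\infty\rho u^2 r\,dr$, not from $\rho^\gamma-\tilde\rho^\gamma$. This is also where $k\ge 3$ is actually used, not only to make $[3,\varphi(\alpha))$ nonempty.
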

\begin{proof}
For any $k \in [3,\varphi(\alpha))$, we multiply $(\ref{nsqdc})_2$ by $|u|^{k-2}u r$ and integrate by parts over $(0,\infty)$ to obtain
\be\la{231}\ba
& \frac{1}{k} \frac{d}{dt} \int_0^\infty \n |u|^{k} r dr
+ \alpha \int_0^\infty \n^\alpha \frac{|u|^k}{r} dr
+ (k-1) \alpha \int_0^\infty \n^\alpha |u|^{k-2} |\p_r u|^2 r dr \\
& = k (1-\alpha) \int_0^\infty \n^\alpha |u|^{k-2} u (\p_r u) dr
+ (k-1) \int_0^\infty \n^\ga |u|^{k-2} (\p_r u) r dr
+ \int_0^\infty \n^{\ga} |u|^{k-2} u dr.
\ea\ee
Since $3 \le k <\varphi(\alpha)$, we have
\be\la{232}
1 - \frac{2k\sqrt{k-1}-4k+4}{(k-2)^2} < \alpha,
\ee
which implies
\be\la{233}
k^2 (1-\alpha)^2 < 4(k-1)\alpha^2.
\ee
Hence, we can choose sufficiently small $\ep>0$ such that
\be\la{234}\ba
\frac{k^2(1-\alpha)^2}{4(1-2\ep)\alpha} \le (1-2\ep) (k-1) \alpha,
\ea\ee
which together with Young's inequality yields
\be\la{235}\ba
k (1-\alpha) \n^\alpha |u|^{k-2} u (\p_r u)
& \le (1-2\ep) \alpha \n^\alpha \frac{|u|^k}{r}
+ \frac{k^2(1-\alpha)^2}{4(1-2\ep)\alpha} \n^{\alpha} |u|^{k-2} |\p_r u|^2 r \\
& \le (1-2\ep) \alpha \n^\alpha \frac{|u|^k}{r}
+ (1-2\ep) (k-1) \alpha \n^{\alpha} |u|^{k-2} |\p_r u|^2 r.
\ea\ee
Substituting (\ref{235}) into (\ref{231}) and using (\ref{2101}), (\ref{202}), and Young's inequality, we derive
\be\la{236}\ba
& \frac{1}{k} \frac{d}{dt} \int_0^\infty \n |u|^{k} r dr
+ \ep \int_0^\infty \n^\alpha \frac{|u|^k}{r} dr
+ 2\ep \int_0^\infty \n^\alpha |u|^{k-2} |\p_r u|^2 r dr \\
& \le (k-1) \int_0^\infty \n^\ga |u|^{k-2} (\p_r u) r dr
+ \int_0^\infty \n^{\ga} |u|^{k-2} u dr \\
& = (k-1) \int_0^1 \n^\ga |u|^{k-2} (\p_r u) r dr + \int_0^1 \n^{\ga} |u|^{k-2} u dr \\
& \quad + (k-1) \int_1^\infty \n^\ga |u|^{k-2} (\p_r u) r dr + \int_1^\infty \n^{\ga} |u|^{k-2} u dr \\
& \le \frac{\ep}{2} \int_0^1 \n^\alpha \frac{|u|^k}{r} dr
+ \frac{\ep}{2} \int_0^1 \n^\alpha |u|^{k-2} |\p_r u|^2 r dr
+ C \int_0^1 \n^{k(\ga-\alpha)+\alpha} r^{k-1} dr \\
& \quad + (k-1) \int_1^\infty \n^\ga |u|^{k-2} (\p_r u) r dr
+ C \int_1^\infty \n^{\ga} |u|^{2} r dr + C \int_1^\infty \n^{\ga} |u|^{k} r dr \\
& \le \frac{\ep}{2} \int_0^1 \n^\alpha \frac{|u|^k}{r} dr
+ \frac{\ep}{2} \int_0^1 \n^\alpha |u|^{k-2} |\p_r u|^2 r dr
+ C \left\| \n r^{\frac{1}{k(\ga-\alpha)+\alpha}} \right\|_{L^\infty(0,1)}^{k(\ga-\alpha)+\alpha}
\int_0^1 r^{k-2} dr \\
& \quad + (k-1) \int_1^\infty \n^\ga |u|^{k-2} (\p_r u) r dr
+ C + C \int_1^\infty \n^{\ga} |u|^{k} r dr,
\ea\ee
which together with (\ref{2101}), (\ref{202}), and the fact that $k(\ga-\alpha)+\alpha>0$ gives
\be\la{2361}\ba
& \frac{1}{k} \frac{d}{dt} \int_0^\infty \n |u|^{k} r dr
+ \frac{\ep}{2} \int_0^\infty \n^\alpha \frac{|u|^k}{r} dr
+ \ep \int_0^\infty \n^\alpha |u|^{k-2} |\p_r u|^2 r dr \\
& \le (k-1) \int_1^\infty \n^\ga |u|^{k-2} (\p_r u) r dr
+ C + C \int_1^\infty \n |u|^{k} r dr.
\ea\ee
If $k \in [3,4]$, by (\ref{2101}) and (\ref{202}), we arrive at
\be\la{2362}\ba
& (k-1) \int_1^\infty \n^\ga |u|^{k-2} (\p_r u) r dr \\
& \le C \int_1^\infty \n^\ga |u|^{2k-4} r dr + C \int_1^\infty \n^\ga |\p_r u|^2 r dr \\
& \le C \int_1^\infty \n |u|^{2} r dr + C \int_1^\infty \n |u|^{k} r dr + C \int_1^\infty \n^\alpha |\p_r u|^2 r dr \\
& \le C + C \int_1^\infty \n |u|^{k} r dr + C \int_1^\infty \n^\alpha |\p_r u|^2 r dr.
\ea\ee
If $k>4$, it follows from (\ref{2101}), (\ref{202}), and Young's inequality that
\be\la{2363}\ba
& (k-1) \int_1^\infty \n^\ga |u|^{k-2} (\p_r u) r dr \\
& \le \frac{\ep}{2} \int_0^\infty \n^\alpha |u|^{k-2} |\p_r u|^2 r dr
+ C \int_1^\infty \n^{2\ga-\alpha} |u|^{k-2} r dr \\
& \le \frac{\ep}{2} \int_0^\infty \n^\alpha |u|^{k-2} |\p_r u|^2 r dr
+ C \int_1^\infty \n |u|^{2} r dr + C \int_1^\infty \n |u|^{k} r dr \\
& \le \frac{\ep}{2} \int_0^\infty \n^\alpha |u|^{k-2} |\p_r u|^2 r dr
+ C + C \int_1^\infty \n |u|^{k} r dr.
\ea\ee
Combining (\ref{2361}), (\ref{2362}), and (\ref{2363}) gives
\be\la{238}\ba
& \frac{1}{k} \frac{d}{dt} \int_0^\infty \n |u|^{k} r dr
+ \frac{\ep}{2} \int_0^\infty \n^\alpha \frac{|u|^k}{r} dr
+ \frac{\ep}{2} \int_0^\infty \n^\alpha |u|^{k-2} |\p_r u|^2 r dr \\
& \le C + C \int_0^\infty \n |u|^{k} r dr + C \int_0^\infty \n^\alpha |\p_r u|^2 r dr,
\ea\ee
which together with Gr\"onwall's inequality yields (\ref{203}) and completes the proof of Lemma \ref{2l3}.
\end{proof}

\begin{lemma}\la{2l4}
Assume that \eqref{th11} holds.
For any $k \in [3,\varphi(\alpha))$, there exists a positive constant $C$ depending only on
$T$, $k$, $\underline{\n_0}$, $\overline{\n_0}$, $\| \n_0 - \tilde{\n} \|_{H^2}$, $\| \mathbf{u}_0 \|_{H^1}$, $\ga$, $\tilde{\n}$, and $\alpha$ such that
\be\la{204}\ba
\sup_{0 \le t \le T} \int_0^\infty \n^{k(\alpha-2)+1} |\p_r\n|^{k} r dr
+ \int_0^T \int_0^\infty \n^{k(\alpha-2)+\ga+1-\alpha} |\p_r\n|^{k} r dr dt
\le C.
\ea\ee
\end{lemma}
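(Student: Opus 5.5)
Since $w-u=\n^{-1}\p_r(\n^\alpha)=\alpha\n^{\alpha-2}\p_r\n$, the quantity $\n^{k(\alpha-2)+1}|\p_r\n|^k$ equals, up to the constant $\alpha^{-k}$, $\n|w-u|^k$. The plan is therefore to prove an analogue of Lemma \ref{2l3} for the effective velocity $w$, namely $\sup_t\int_0^\infty \n|w|^k r\,dr\le C$ together with a dissipation term, and then to combine it with (\ref{203}) via $|w-u|^k\le 2^{k-1}(|w|^k+|u|^k)$. The dissipative part of (\ref{204}) will come out of the pressure term in the $w$-equation.

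\emph{The $w$-estimate.} Multiply (\ref{yxsdfc1}) by $|w|^{k-2}w\,r$ and integrate over $(0,\infty)$. Using the mass equation $(r\n)_t+(r\n u)_r=0$, the transport part gives exactly $\frac1k\frac{d}{dt}\int_0^\infty \n|w|^k r\,dr$. For the pressure term we write $\p_r(\n^\ga)=\ga\n^{\ga-1}\p_r\n$ and $w=u+\alpha\n^{\alpha-2}\p_r\n$, which gives
\begin{equation*}
\ga\n^{\ga-1}\p_r\n\,|w|^{k-2}w = \frac{\ga}{\alpha}\n^{\ga+1-\alpha}(w-u)|w|^{k-2}w .
\end{equation*}
The piece $\frac{\ga}{\alpha}\n^{\ga+1-\alpha}|w|^k$ has a good sign. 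By Young's inequality, the cross term is bounded by $\frac{\ga}{2\alpha}\n^{\ga+1-\alpha}|w|^k+C\n^{\ga+1-\alpha}|u|^k$. This leads to
\begin{equation*}
\frac1k\frac{d}{dt}\int_0^\infty \n|w|^k r\,dr+\frac{\ga}{2\alpha}\int_0^\infty \n^{\ga+1-\alpha}|w|^k r\,dr\le C\int_0^\infty \n^{\ga+1-\alpha}|u|^k r\,dr .
\end{equation*}
At $t=0$, $\int_0^\infty \n_0|w_0|^k r\,dr$ is finite because $\n_0$ is bounded above and below and $\n_0-\tilde\n\in H^2$, $\mathbf{u}_0\in H^1$ embed into $L^k(\rr)$ at the level of first derivatives. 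This is where the dependence on $\|\n_0-\tilde\n\|_{H^2}$ enters.

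\emph{Bounding the right-hand side.} This is the main obstacle, since $\n$ is not yet bounded near the origin. On $(1,\infty)$, Lemma \ref{2l2} gives $\n\le C$, so the integrand is at most $C\n|u|^k r$, which is controlled by (\ref{203}). On $(0,1)$, write $\n^{\ga+1-\alpha}|u|^kr=\n^{\alpha}\frac{|u|^k}{r}\cdot\n^{\ga+1-2\alpha}r^2$. Here $\n^{\ga+1-2\alpha}r^2\le C$ on $(0,1)$ by Lemma \ref{2l2} with $\xi=2/(\ga+1-2\alpha)$; note that $\ga+1-2\alpha>0$ because $\ga>1>2\alpha-1$. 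The remaining factor $\int_0^T\int_0^1\n^\alpha\frac{|u|^k}{r}\,dr\,dt$ is bounded by the dissipation in (\ref{203}). Hence the right-hand side is integrable in time, and integrating over $(0,T)$ gives the $w$-bound together with $\int_0^T\int_0^\infty \n^{\ga+1-\alpha}|w|^kr\,dr\,dt\le C$.

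\emph{Conclusion.} From $\alpha\n^{\alpha-2}\p_r\n=w-u$ we have
\begin{equation*}
\n^{k(\alpha-2)+1}|\p_r\n|^k\le C\n(|w|^k+|u|^k),
\end{equation*}
which yields the first term of (\ref{204}). Multiplying the same inequality by $\n^{\ga-\alpha}$ and using the two time-integrated bounds, on $w$ just obtained and on $u$ from the previous step, yields the second term. This completes the planned proof of (\ref{204}).
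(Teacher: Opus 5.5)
Your proposal is correct and follows the paper's proof essentially step for step. The steps that match are: the $L^k$ estimate for $w$ obtained from \eqref{2401} with the same Young splitting $\frac{\ga}{2\alpha}$; the bound on $\int_0^\infty \n^{\ga+1-\alpha}|u|^k r\,dr$ using $\|\n r^{2/(\ga+1-2\alpha)}\|_{L^\infty(0,1)}$ near the origin and $\|\n\|_{L^\infty(1,\infty)}$ away from it; and the recovery of \eqref{204} from $w-u=\alpha\n^{\alpha-2}\p_r\n$ together with \eqref{203}. You also check finiteness of $\int_0^\infty \n_0|w_0|^k r\,dr$ explicitly, which the paper leaves implicit.
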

\begin{proof}
First, by (\ref{yxsd}) and (\ref{yxsdfc1}), the effective velocity $w$ satisfies
\be\la{2401}\ba
\n w_t + \n u w_r + \frac{\ga}{\alpha} \n^{\ga+1-\alpha} (w-u) = 0.
\ea\ee
For any $k \in [3,\varphi(\alpha))$, we multiply (\ref{2401}) by $|w|^{k-2} w r$, integrate over $(0,\infty)$, and use $(\ref{nsqdc})_1$ and Young's inequality to derive
\be\la{241}\ba
& \frac{1}{k} \frac{d}{dt} \int_0^\infty \n |w|^{k} r dr
+ \frac{\ga}{\alpha} \int_0^\infty \n^{\ga+1-\alpha} |w|^{k} r dr \\
& = \frac{\ga}{\alpha} \int_0^\infty \n^{\ga+1-\alpha} |w|^{k-2} w u r dr \\
& \le \frac{\ga}{2\alpha} \int_0^\infty \n^{\ga+1-\alpha} |w|^{k} r dr
+ C \int_0^\infty \n^{\ga+1-\alpha} |u|^{k} r dr.
\ea\ee
It follows from (\ref{202}) and (\ref{203}) that
\be\la{242}\ba
& \int_0^\infty \n^{\ga+1-\alpha} |u|^{k} r dr \\
& = \int_0^1 \n^{\ga+1-\alpha} |u|^{k} r dr
+ \int_1^\infty \n^{\ga+1-\alpha} |u|^{k} r dr \\
& \le \left\| \n r^{ \frac{2}{\ga+1-2\alpha} } \right\|^{\ga+1-2\alpha}_{L^\infty(0,1)}
\int_0^1 \n^{\alpha} \frac{|u|^{k}}{r} dr
+ \| \n \|_{L^\infty(1,\infty)}^{\ga-\alpha} \int_1^\infty \n |u|^{k} r dr \\
& \le C \int_0^1 \n^{\alpha} \frac{|u|^{k}}{r} dr + C.
\ea\ee
Combining (\ref{241}) and (\ref{242}) leads to
\be\la{243}\ba
\frac{1}{k} \frac{d}{dt} \int_0^\infty \n |w|^{k} r dr
+ \frac{\ga}{2\alpha} \int_0^\infty \n^{\ga+1-\alpha} |w|^{k} r dr
\le C \int_0^1 \n^{\alpha} \frac{|u|^{k}}{r} dr + C.
\ea\ee
Integrating (\ref{243}) over $(0,T)$ and using (\ref{203}), we arrive at
\be\la{244}\ba
\sup_{0 \le t \le T} \int_0^\infty \n |w|^{k} r dr
+ \int_0^T \int_0^\infty \n^{\ga+1-\alpha} |w|^{k} r dr
\le C.
\ea\ee
By virtue of (\ref{yxsd}), (\ref{244}), (\ref{203}), and (\ref{242}), we have
\be\la{245}\ba
\sup_{0 \le t \le T} \int_0^\infty \n^{1-k} |\p_r \n^\alpha|^{k} r dr
+ \int_0^T \int_0^\infty \n^{\ga+1-\alpha-k} |\p_r \n^\alpha|^{k} r dr
\le C,
\ea\ee
which implies (\ref{204}) and finishes the proof of Lemma \ref{2l4}.
\end{proof}

\begin{lemma}\la{2l5}
Assume that \eqref{th11} holds.
Then there exists a positive constant $C$ depending only on
$T$, $\underline{\n_0}$, $\overline{\n_0}$, $\| \n_0 - \tilde{\n} \|_{H^2}$, $\| \mathbf{u}_0 \|_{H^1}$, $\ga$, $\tilde{\n}$, and $\alpha$ such that
\be\la{205}\ba
\sup_{0 \le t \le T} \| \n \|_{L^\infty(0,\infty)} \le C.
\ea\ee
\end{lemma}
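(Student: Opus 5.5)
Since Lemma \ref{2l2} already gives $\sup_{0\le t\le T}\|\n\|_{L^\infty(1,\infty)}\le C$, it suffices to bound $\n$ on $(0,1)$. The plan is to apply the one-dimensional Sobolev inequality (\ref{ywsi2}) on $(0,2)$ to a suitable power $\n^{\beta}$ with $\beta>0$, and to control its derivative by the weighted $L^k$ bound (\ref{204}) of Lemma \ref{2l4}, with $k\in[3,\varphi(\alpha))$ fixed. Such a $k$ exists because (\ref{th11}) gives $\alpha>1-2/k_1$, and by Remark \ref{rk1} this exceeds $f(3)$, so $\varphi(\alpha)>3$.

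First write
\[
|\p_r \n^{\beta}| = \beta\, \n^{\beta-1}|\p_r\n| = \beta\, \n^{\beta-1-\frac{k(\alpha-2)+1}{k}}\, r^{-\frac1k}\cdot \big(\n^{k(\alpha-2)+1} r\big)^{\frac1k}|\p_r\n|.
\]
Choose $\beta$ so that the leftover power of $\n$ vanishes, namely $\beta = 1+\frac{k(\alpha-2)+1}{k} = \alpha-1+\frac{1}{k}$. This requires $\beta>0$, i.e.\ $\alpha>1-\frac1k$. That is too restrictive for $k$ near $3$, so instead I would not fully cancel the power of $\n$ and would keep a factor $\n^{\theta}$ in front. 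By H\"older's inequality,
\[
\int_0^2 |\p_r\n^\beta|\,dr \le C\Big(\int_0^2 \n^{k(\alpha-2)+1}|\p_r\n|^k r\,dr\Big)^{\frac1k}\Big(\int_0^2 \n^{\frac{k}{k-1}\theta} r^{-\frac{1}{k-1}}dr\Big)^{\frac{k-1}{k}},
\]
where $\theta=\beta-\alpha+1-\frac1k$. Since $k>2$, we have $\frac1{k-1}<1$, so $r^{-1/(k-1)}$ is integrable near $0$.

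If $\theta\le 0$, I would estimate $\n^{\frac{k}{k-1}\theta}$ using the sup of $\n$ on $(0,2)$. Concretely, set $M(t)=\|\n\|_{L^\infty(0,1)}$ and take $\beta$ large (say $\beta=1$), so that $\theta=2-\alpha-\frac1k>0$. Then the last integral is at most $C M^{\frac{k}{k-1}\theta}$ plus a constant coming from $(1,2)$, where Lemma \ref{2l2} applies. The zero-order term in (\ref{ywsi2}) is bounded by $\int_0^2\n\,dr\le C$, using $\|\n r^{\xi}\|_{L^\infty(0,1)}\le C$ with $\xi<1$. Combining these gives
\[
M^{\beta}\le C + C M^{\theta},\qquad \theta=\beta-(\alpha-1+\tfrac1k).
\]
This closes as long as $\theta<\beta$, i.e.\ $\alpha-1+\frac1k>0$, which again requires $\alpha>1-\frac1k$. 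That condition forces $k>\frac1{1-\alpha}$, whereas for large $k$ we need $\varphi(\alpha)>k$.

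The main obstacle is therefore this exponent bookkeeping near the origin: absorbing $M^\theta$ needs $\alpha>1-1/k$, while Lemma \ref{2l4} needs $k<\varphi(\alpha)$. To reduce the requirement to the weaker $\alpha>1-2/k$ of (\ref{k02}), I would exploit the weight $r^{-1/(k-1)}$ together with the $r$-weighted bound $\|\n r^\xi\|_{L^\infty(0,1)}\le C$ from Lemma \ref{2l2}, replacing part of $M^\theta$ by a power of $\n r^{\xi}$ with small $\xi$. The argument is then closed by Young's inequality, choosing $k\in[3,\varphi(\alpha))$ close enough to $\varphi(\alpha)$ that the effective power of $M$ on the right is strictly less than $\beta$. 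This is possible precisely when $f(k)<\alpha$ for some admissible $k$, which (\ref{th11}) and Remark \ref{rk1} guarantee. Together with (\ref{202}) this yields (\ref{205}).
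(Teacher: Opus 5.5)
Your ingredients are the paper's: the one-dimensional Sobolev inequality on $(0,1)$, splitting $|\partial_r\rho|$ against the weighted bound \eqref{204}, and the $r$-weighted bound \eqref{202}. However, the step that actually proves the lemma is never carried out, and the obstacle you spend most of the proposal on is self-inflicted. It comes only from bounding the conjugate H\"older factor by $M=\|\rho\|_{L^\infty(0,1)}$. That route closes only when $\alpha>1-\frac1k$, which is equivalent to $k<\frac{1}{1-\alpha}$ (not $k>\frac{1}{1-\alpha}$ as you wrote). With $k\ge 3$ it therefore fails for every $\alpha\le\frac23$, which is part of the admissible range.

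Your last paragraph then asserts, without computation, that a partial replacement by $\rho r^{\xi}$ plus Young's inequality closes once $k$ is chosen close to $\varphi(\alpha)$, and ties this to the condition $\alpha>1-\frac2k$ of \eqref{k02}. That bookkeeping is not right. \eqref{k02} is the fixed threshold $\alpha>1-\frac{2}{k_1}$. The $k$-dependent condition it guarantees, $f(k)<\alpha<1-\frac2k$ in \eqref{ewk}, is used only for the lower bound in Lemma \ref{2l6}. The upper bound needs no relation between $\alpha$ and $k$ beyond $k\in[3,\varphi(\alpha))$, and no particular choice of $k$ within that interval.

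The missing step is to drop $M$ entirely. By Young's inequality,
\[
|\partial_r\rho|\le \rho^{k(\alpha-2)+1}|\partial_r\rho|^k r+\rho^{\frac{(2-\alpha)k-1}{k-1}}\,r^{-\frac{1}{k-1}} .
\]
In two dimensions \eqref{202} holds for every $\xi>0$. Taking $\xi=\frac{1}{2((2-\alpha)k-1)}$ gives $\rho^{\frac{(2-\alpha)k-1}{k-1}}r^{-\frac{1}{k-1}}\le C\,r^{-\frac{3}{2(k-1)}}$ on $(0,1)$, which is integrable because $k\ge 3$. You also have $\int_0^1\rho\,dr\le \|\rho r^{1/2}\|_{L^\infty(0,1)}\int_0^1 r^{-1/2}\,dr\le C$, and \eqref{204} controls the first term. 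Then \eqref{ywsi2} yields $\|\rho\|_{L^\infty(0,1)}\le C$ directly, for any fixed $k\in[3,\varphi(\alpha))$, with no absorption argument.

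This is exactly the paper's proof. The point you missed is that the arbitrarily small weight available in \eqref{202} can absorb the whole power of $\rho$. The factor $r^{-1/(k-1)}$, whose exponent is at most $\frac12$, leaves enough slack in integrability to pay for it.
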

\begin{proof}
For $k \in [3,\varphi(\alpha))$, it follows from (\ref{202}), (\ref{204}), and Young's inequality that
\be\la{251}\ba
\| \n \|_{L^\infty(0,1)}
& \le \int_0^1 \n dr + \int_0^1 |\p_r \n| dr \\
& \le \| \n r^{\frac{1}{2}} \|_{L^\infty(0,1)} \int_0^1 r^{-\frac{1}{2}} dr
+ C \int_0^1 \n^{k(\alpha-2)+1} |\p_r\n|^{k} r dr \\
& \quad + C \int_0^1 \n^{ \frac{(2-\alpha)k-1}{k-1} } r^{-\frac{1}{k-1}} dr \\
& \le C + C \left\| \n r^{ \frac{1}{ 2((2-\alpha)k-1) } } \right\|_{L^\infty(0,1)}^{ \frac{(2-\alpha)k-1}{k-1} } \int_0^1 r^{-\frac{3}{2(k-1)}} dr \\
& \le C,
\ea\ee
which together with (\ref{202}) yields (\ref{205}).
\end{proof}

Next, we derive the crucial lower bound for the density.
Note that, for $k_1$ defined in (\ref{k1}), we have
\be\nonumber
1 - \frac{2k_1\sqrt{k_1-1}-4k_1+4}{(k_1-2)^2} = 1 - \frac{2}{k_1}.
\ee
Moreover, for any $k>k_1$,
\be\nonumber
1 - \frac{2k\sqrt{k-1}-4k+4}{(k-2)^2} < 1 - \frac{2}{k}.
\ee
Hence, for any $1-\frac{2}{k_1}<\alpha<1$, there exists $k>k_1$ such that
\be\la{ewk}
1 - \frac{2k\sqrt{k-1}-4k+4}{(k-2)^2} < \alpha < 1 - \frac{2}{k}.
\ee

\begin{lemma}\la{2l6}
Assume that \eqref{th11} holds.
Then there exists a positive constant $C$ depending only on
$T$, $\underline{\n_0}$, $\overline{\n_0}$, $\| \n_0 - \tilde{\n} \|_{H^2}$, $\| \mathbf{u}_0 \|_{H^1}$, $\ga$, $\tilde{\n}$, and $\alpha$ such that
\be\la{206}\ba
\sup_{0 \le t \le T} \| \n^{-1} \|_{L^\infty(0,\infty)} \le C.
\ea\ee
\end{lemma}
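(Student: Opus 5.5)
The plan is to split $(0,\infty)$ into $(0,1)$ and $(1,\infty)$. Throughout, fix $k>k_1$ satisfying \eqref{ewk}, so that $k\in[3,\varphi(\alpha))$ and Lemmas \ref{2l3}--\ref{2l5} apply. In particular we may use the upper bound \eqref{205} and the weighted gradient bound \eqref{204}, i.e. $\sup_t\int_0^\infty \n^{k(\alpha-2)+1}|\p_r\n|^k r\,dr\le C$.

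\emph{Near the origin} we work in the Lagrangian coordinate $y$ of \eqref{lzb1}. By \eqref{lzb2}, $\p_y=\n^{-1}r^{-1}\p_r$, and volume is conserved in the form $\int_0^{y}\n^{-1}dy=\frac12 r^2$. By \eqref{205}, $y(1,t)\le C$; since $y$ is increasing in $r$, the point $r=1$ corresponds to $y=y(1,t)\le C$. Fix $m\in(0,1]$, to be chosen below, and apply \eqref{ywsi2} in $y$ to $g=\n^{-m}$ on an interval $(0,y_0)$, with $y_0$ comparable to $y(1,t)$. For the averaged term, Jensen's inequality and volume conservation give
\[
\frac1{y_0}\int_0^{y_0}\n^{-m}dy\le \Big(\frac1{y_0}\int_0^{y_0}\n^{-1}dy\Big)^m\le C\,y_0^{-m},
\]
and the lower bound on $y_0$ follows from $\n\ge$ positive on a set of positive measure by \eqref{2101}. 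For the derivative term, changing back to $r$ gives
\[
\int|\p_y\n^{-m}|\,dy=m\int_0^{1}\n^{-m-1}|\p_r\n|\,dr .
\]
Write the integrand as $\big(\n^{\alpha-2+\frac1k}|\p_r\n|r^{\frac1k}\big)\cdot \n^{1-\alpha-m-\frac1k}r^{-\frac1k}$ and apply H\"older with exponents $k,\,k'$. The first factor is controlled by \eqref{204}. Choosing $m=1-\alpha-\frac1k$, which is positive because $\alpha<1-\frac2k<1-\frac1k$, makes the density power in the second factor vanish, leaving $\int_0^1 r^{-\frac1{k-1}}dr<\infty$ since $k>2$. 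Hence $\n^{-m}\le C$ on $(0,1)$, i.e. $\|\n^{-1}\|_{L^\infty(0,1)}\le C$.

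\emph{Far field $r\ge1$.} Here there is no conserved volume, and a pure Sobolev argument fails. Indeed, $|\n^{-m}-\tilde\n^{-m}|(r)\le m\int_r^\infty \n^{-m-1}|\p_r\n|ds$, and after the same H\"older splitting one is left with $\int_1^\infty r^{-1/(k-1)}dr=\infty$. So I would first derive extra integrability of $h\triangleq \n^{-m}-\tilde\n^{-m}$ at infinity from the transport equation $(\ref{nsqdc})_1$, written as
\[
(\n^{-1})_t+u(\n^{-1})_r=\n^{-1}\big(\p_ru+\tfrac ur\big).
\]
The steps are as follows.
\begin{enumerate}
\item Multiply by $|h|^{q-2}h\,\n^{-m}r$ for suitably large $q$, integrate by parts, and obtain
\[
\frac{d}{dt}\int |h|^q r\,dr\le C\int |h|^{q}r\,dr+C\int \n^{\alpha}\big(|\p_ru|^2+u^2r^{-2}\big)r\,dr+\dots
\]
\item Control the weights $\n^{-\alpha}$ that appear. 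On $\{\n\ge\tilde\n/2\}$ they are harmless. On $\{\n<\tilde\n/2\}$ they are absorbed into powers of $|h|$, which is bounded below there.
\item Close by Gr\"onwall using the dissipation in \eqref{2101} and \eqref{203}.
\end{enumerate}
With $\sup_t\|h\|_{L^q(1,\infty;r\,dr)}\le C$ in hand, I would write $|h|^2(r)\le 2\int_r^\infty|h||\p_rh|ds$. Then apply H\"older with three exponents, putting $\p_rh$ in the weighted $L^k$ norm of \eqref{204}, $h$ in $L^q(r\,dr)$, and leaving a pure power of $r$. The exponent bookkeeping for that power of $r$ to be integrable on $(1,\infty)$, with $q$ from the Gr\"onwall step, is where I expect the restriction $\alpha<1-\frac2k$ (rather than $1-\frac1k$) to enter. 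This yields $\|\n^{-1}\|_{L^\infty(1,\infty)}\le C$.

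The main obstacle is the far-field energy estimate for $h$. The right-hand side of the transport equation involves $\p_ru$, which is only controlled with the degenerate weight $\n^\alpha$, so it must be closed without an a priori lower bound, with the region of small density handled self-consistently. The choice of $m$ and $q$ must simultaneously keep this Gr\"onwall argument closed and make the final Sobolev/H\"older step convergent; that compatibility is exactly condition \eqref{ewk}.
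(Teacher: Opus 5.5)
\textbf{The gap is in the far field, at your step 2.} After integrating the convective term by parts, the transport equation gives a source term $\int(\ldots)\,|\partial_r u+\frac ur|\,r\,dr$ with $(\ldots)\approx\rho^{-m}|h|^{q-1}$ on $\{\rho<\tilde\rho/2\}$. The only control on $\partial_r u$ is the degenerate dissipation $\int\rho^{\alpha}(|\partial_r u|^2+u^2r^{-2})\,r\,dr$. Cauchy--Schwarz against it leaves $\int_{\{\rho<\tilde\rho/2\}}\rho^{-2m-\alpha}|h|^{2q-2}\,r\,dr$. Since $\rho^{-m}\approx|h|$ on that set, this is $\approx\int_{\{\rho<\tilde\rho/2\}}|h|^{2q+\alpha/m}\,r\,dr$, which is superlinear in $\int|h|^q r\,dr$. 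The lower bound $|h|\ge c>0$ there only lets you replace a power of $|h|$ by a \emph{higher} one, whereas Gr\"onwall needs the reverse. So steps 2--3 do not close. Separately, ``suitably large $q$'' is incompatible with your final three-exponent H\"older step: the leftover power of $r$ is integrable on $(1,\infty)$ only if $\frac2k+\frac2q>1$.

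\textbf{The missing idea is the paper's.} Write $\partial_r u+\frac ur=r^{-1}\partial_r(ru)$ and integrate by parts, so that no derivative of $u$ appears at all. Use $h=\rho^{-1/m}-\tilde\rho^{-1/m}$ with $m$ large (paper's notation). The worst term is then $\int_{\{\rho<\tilde\rho/2\}}\rho^{-\frac2m-1}|\partial_r\rho||u|\,r\,dr$. Young splits it into $\rho^{k(\alpha-2)+1}|\partial_r\rho|^k$ from \eqref{204}, $\rho|u|^k$ from \eqref{203}, and a leftover power of $\rho$. That exponent is nonnegative precisely when $\alpha<1-\frac2k$ and $m\ge\frac{2k}{k-2-k\alpha}$. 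The leftover power is therefore bounded on a set of finite $r\,dr$-measure, which gives $\frac{d}{dt}\int h^2 r\,dr\le C$ with $q=2$ and no Gr\"onwall. The $L^\infty(1,\infty)$ bound then follows from \eqref{ywsi1} after splitting into $\{\rho<\tilde\rho/2\}$ and its complement.

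\textbf{Near the origin.} Your choice $m=1-\alpha-\frac1k$ is sound and cleaner than the paper's. The density power cancels exactly, so you need neither the Young absorption of $\|\rho^{-1}\|^{\alpha+\frac1k}_{L^\infty}$ nor a prior far-field bound. However, $y_0$ cannot be taken comparable to $y(1,t)$. The bound \eqref{2101} controls $\int|\rho-\tilde\rho|^2r\,dr$ only by a fixed constant, which does not stop $\rho$ from being small on the whole unit disk. Take instead $y_0=y(r_0,t)$ with $r_0$ large as in the paper, so that $y_0\ge\frac12\tilde\rho r_0^2-C_0^{1/2}r_0\ge1$. Your H\"older step on $(0,r_0)$ is unchanged.

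\textbf{A simpler far-field fix.} The same near-origin computation handles the far field if you anchor \emph{locally} instead of at $r=\infty$. By \eqref{2101}, $\int_{\{\rho<\tilde\rho/2\}}r\,dr\le C_1$. Hence every interval $(r,r+C_1+1)$ with $r\ge1$ contains a point $r_*$ with $\rho(r_*,t)\ge\tilde\rho/2$. Then $\rho^{-m}(r)\le(\tilde\rho/2)^{-m}+m\int_r^{r_*}\rho^{\alpha-2+\frac1k}|\partial_s\rho|\,ds\le C$ by your H\"older step, since $s^{-\frac1{k-1}}\le1$ on that interval. So your claim that a Sobolev-type argument must fail applies only to integrating from infinity. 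The local version uses just \eqref{204}, \eqref{2101}, and $\alpha<1-\frac1k$.
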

\begin{proof}
First, from $(\ref{nsqdc})_1$, we deduce that $\n^{-1}$ satisfies
\be\la{261}\ba
(\n^{-1})_t + \p_r (\n^{-1}) u - \n^{-1} \p_r u - \frac{1}{r} \n^{-1} u = 0.
\ea\ee
For any $m \ge 1$, multiplying (\ref{261}) by $\frac{1}{m} \n^{-\frac{1}{m} + 1}$ yields
\be\la{262}\ba
(\n^{-\frac{1}{m}})_t + \p_r (\n^{-\frac{1}{m}}) u - \frac{1}{m} \n^{-\frac{1}{m}} \p_r u - \frac{1}{mr} \n^{-\frac{1}{m}} u = 0,
\ea\ee
which implies
\be\la{263}\ba
(\n^{-\frac{1}{m}} - \tilde{\n}^{-\frac{1}{m}})_t + \p_r (\n^{-\frac{1}{m}} - \tilde{\n}^{-\frac{1}{m}}) u - \frac{1}{m} \n^{-\frac{1}{m}} \p_r u - \frac{1}{mr} \n^{-\frac{1}{m}} u = 0.
\ea\ee
Multiplying (\ref{263}) by $2 (\n^{-\frac{1}{m}} - \tilde{\n}^{-\frac{1}{m}}) r$ and integrating by parts over $(0,\infty)$, we derive
\be\la{264}\ba
& \frac{d}{dt} \int_0^\infty (\n^{-\frac{1}{m}} - \tilde{\n}^{-\frac{1}{m}})^2 r dr \\
& = - \int_0^\infty \p_r (\n^{-\frac{1}{m}} - \tilde{\n}^{-\frac{1}{m}})^2 u r dr
+ \frac{2}{m} \int_0^\infty \n^{-\frac{1}{m}} (\p_r u) (\n^{-\frac{1}{m}} - \tilde{\n}^{-\frac{1}{m}}) r dr \\
& \quad + \frac{2}{m} \int_0^\infty \n^{-\frac{1}{m}} u (\n^{-\frac{1}{m}} - \tilde{\n}^{-\frac{1}{m}}) dr \\
& = - \left( 2 + \frac{2}{m} \right) \int_0^\infty \p_r(\n^{-\frac{1}{m}}) (\n^{-\frac{1}{m}} - \tilde{\n}^{-\frac{1}{m}}) u r dr
- \frac{2}{m} \int_0^\infty \n^{-\frac{1}{m}} \p_r(\n^{-\frac{1}{m}} - \tilde{\n}^{-\frac{1}{m}}) u r dr \\
& \le C \int_0^\infty \n^{-\frac{1}{m}-1} |\p_r \n| |\n^{-\frac{1}{m}} - \tilde{\n}^{-\frac{1}{m}}| |u| r dr
+ C \int_0^\infty \n^{-\frac{2}{m}-1} |\p_r \n| |u| r dr \\
& \le C \int_{(0<\n<\frac{\tilde{\n}}{2})} \n^{-\frac{2}{m}-1} |\p_r \n| |u| r dr
+ C \int_{(\n \ge \frac{\tilde{\n}}{2})} |\p_r \n| |u| r dr.
\ea\ee
On the one hand, we use (\ref{2101}), (\ref{205}), and H\"older's inequality to obtain
\be\la{265}\ba
\int_{(\n \ge \frac{\tilde{\n}}{2})} |\p_r \n| |u| r dr
& \le C \int_{(\n \ge \frac{\tilde{\n}}{2})} |\p_r \n^{\alpha-\frac{1}{2}}|^2 r dr
+ C \int_{(\n \ge \frac{\tilde{\n}}{2})} \n |u|^2 r dr
\le C.
\ea\ee
On the other hand, for $k$ determined in (\ref{ewk}), from (\ref{203}), (\ref{204}), and Young's inequality, we conclude that
\be\la{266}\ba
& \int_{(0<\n<\frac{\tilde{\n}}{2})} \n^{-\frac{2}{m}-1} |\p_r \n| |u| r dr \\
& \le C \int_{(0<\n<\frac{\tilde{\n}}{2})} \n^{k(\alpha-2)+1} |\p_r\n|^{k} r dr
+ C \int_{(0<\n<\frac{\tilde{\n}}{2})} \n |u|^{k} r dr
+ C \int_{(0<\n<\frac{\tilde{\n}}{2})} \n^{-\frac{m(k\alpha-k+2)+2k}{m(k-2)}} r dr \\
& \le C + C \int_{(0<\n<\frac{\tilde{\n}}{2})} \n^{-\frac{m(k\alpha-k+2)+2k}{m(k-2)}} r dr.
\ea\ee
Since $\alpha<1-\frac{2}{k}$, we choose $m$ such that
\be\la{2660}
m \ge \max \left\{ 1, \frac{2k}{k-2-k\alpha} \right\},
\ee
which implies
\be\la{267}\ba
\frac{m(k\alpha-k+2)+2k}{m(k-2)} \le 0.
\ea\ee
Combining (\ref{267}), (\ref{205}), and (\ref{2101}) leads to
\be\la{2670}\ba
\int_{(0<\n<\frac{\tilde{\n}}{2})} \n^{-\frac{m(k\alpha-k+2)+2k}{m(k-2)}} r dr
& \le C \int_0^\infty \chi_{(0<\n<\frac{\tilde{\n}}{2})} r dr \\
& = C \int_0^\infty \chi_{(\frac{\tilde{\n}}{2}<\tilde{\n}-\n<\tilde{\n})} r dr \\
& \le C \int_0^\infty |\n - \tilde{\n}|^2 r dr \le C,
\ea\ee
Substituting (\ref{265}), (\ref{266}), and (\ref{2670}) into (\ref{264}) yields
\be\la{268}\ba
\frac{d}{dt} \int_0^\infty (\n^{-\frac{1}{m}} - \tilde{\n}^{-\frac{1}{m}})^2 r dr
& \le C.
\ea\ee
Moreover, the condition $\n_0 \ge \underline{\n_0}>0$ along with Taylor's formula shows
\be\la{269}\ba
\n_0^{\frac{1}{m}} = \tilde{\n}^{\frac{1}{m}} + \frac{1}{m} \tilde{\n}_0^{\frac{1}{m}-1} (\n_0 - \tilde{\n}),
\ea\ee
where $\tilde{\n}_0$ lies between $\n_0$ and $\tilde{\n}$.
This implies that
\be\la{2610}\ba
|\n_0^{-\frac{1}{m}} - \tilde{\n}^{-\frac{1}{m}}| = (\n_0 \tilde{\n})^{-\frac{1}{m}} |\n_0^{\frac{1}{m}} - \tilde{\n}^{\frac{1}{m}}| \le C |\n_0 - \tilde{\n}|.
\ea\ee
Hence, (\ref{2610}) and (\ref{th12}) ensure that
\be\la{2611}\ba
\int_0^\infty (\n_0^{-\frac{1}{m}} - \tilde{\n}^{-\frac{1}{m}})^2 r dr
\le C \int_0^\infty |\n_0 - \tilde{\n}|^2 r dr
\le C.
\ea\ee
Integrating (\ref{268}) over $(0,T)$ and using (\ref{2611}), we arrive at
\be\la{2612}\ba
\sup_{0 \le t \le T} \int_0^\infty (\n^{-\frac{1}{m}} - \tilde{\n}^{-\frac{1}{m}})^2 r dr
\le C.
\ea\ee
In view of (\ref{ywsi1}), (\ref{2101}), (\ref{204}), (\ref{2670}), and (\ref{2612}), we derive
\be\la{2613}\ba
& \| (\n^{-\frac{1}{m}} - \tilde{\n}^{-\frac{1}{m}})^2 \|_{L^\infty(1,\infty)} \\
& \le \int_1^\infty |\p_r (\n^{-\frac{1}{m}} - \tilde{\n}^{-\frac{1}{m}})^2| dr \\
& \le C \int_1^\infty |\n^{-\frac{1}{m}} - \tilde{\n}^{-\frac{1}{m}}| \n^{-\frac{1}{m}-1} |\p_r \n| dr \\
& \le C \int_1^\infty \chi_{(0 < \n < \frac{\tilde{\n}}{2})} |\n^{-\frac{1}{m}} - \tilde{\n}^{-\frac{1}{m}}| \n^{-\frac{1}{m}-1} |\p_r \n| dr
+ C \int_1^\infty \chi_{(\n \ge \frac{\tilde{\n}}{2})} |\n^{-\frac{1}{m}} - \tilde{\n}^{-\frac{1}{m}}| \n^{-\frac{1}{m}-1} |\p_r \n| dr \\
& \le C \int_1^\infty \chi_{(0 < \n < \frac{\tilde{\n}}{2})} \n^{-\frac{2}{m}-1} |\p_r \n| dr
+ C \int_1^\infty \chi_{(\n \ge \frac{\tilde{\n}}{2})} |\n^{-\frac{1}{m}} - \tilde{\n}^{-\frac{1}{m}}| |\p_r \n^{\alpha-\frac{1}{2}}| dr \\
& \le C \left( \int_0^\infty \n^{k(\alpha-2)+1} |\p_r\n|^{k} r dr \right)^{\frac{1}{k}}
\left( \int_1^\infty \chi_{(0 < \n < \frac{\tilde{\n}}{2})} \n^{-\frac{m(k\alpha-k+1)+2k}{m(k-1)}} r dr \right)^{\frac{k-1}{k}} \\
& \quad + C \int_0^\infty |\n^{-\frac{1}{m}} - \tilde{\n}^{-\frac{1}{m}}|^2 r dr
+ C \int_0^\infty |\p_r \n^{\alpha-\frac{1}{2}}|^2 r dr \\
& \le C,
\ea\ee
where we have used the following fact
\be\la{2614}\ba
\frac{m(k\alpha-k+1)+2k}{m(k-1)} \le 0,
\ea\ee
due to (\ref{2660}).

From (\ref{2613}), we conclude that
\be\la{2615}\ba
\| \n^{-1} \|_{L^\infty(1,\infty)}
& = \| \n^{-\frac{1}{m}} \|_{L^\infty(1,\infty)}^m
\le \left( \| \n^{-\frac{1}{m}} - \tilde{\n}^{-\frac{1}{m}} \|_{L^\infty(1,\infty)} + \tilde{\n}^{-\frac{1}{m}} \right)^m
\le C.
\ea\ee
Next, we estimate $\| \n^{-1} \|_{L^\infty(0,1)}$ in the Lagrangian coordinates.

By (\ref{2101}), there exists a positive constant $C_0$ depending only on $\n_0$, $\mathbf{u}_0$, $\ga$, $\tilde{\n}$, $\alpha$, and $T$ such that
\be\la{261501}\ba
\sup_{0 \le t \le T} \int_0^\infty |\n - \tilde{\n}|^2 r dr \le C_0.
\ea\ee
This, combined with H\"older's inequality, implies that for any $0 < r < \infty$,
\be\la{261502}\ba
\int_0^r \n s ds
& = \int_0^r (\n - \tilde{\n}) s ds + \tilde{\n} \int_0^r s ds \\
& \ge - \left( \int_0^r |\n - \tilde{\n}|^2 s ds \right)^{\frac{1}{2}} \left( \int_0^r s ds \right)^{\frac{1}{2}}
+ \frac{1}{2} \tilde{\n} r^2 \\
& \ge - C_0^{\frac{1}{2}} r + \frac{1}{2} \tilde{\n} r^2.
\ea\ee
We choose
\be\la{261503}
r_0 \triangleq 1 + \frac{2}{\tilde{\n}} (1 + C_0^{\frac{1}{2}}),
\ee
and set
\be\la{2616}\ba
y_0 \triangleq \int_0^{r_0} \n s ds.
\ea\ee
In view of (\ref{205}), (\ref{261502}), and (\ref{261503}), we have
\be\la{261504}
1 \le y_0 \le C.
\ee
For any $r\in(0,1)$, with $y$ defined by (\ref{lzb1}), we use (\ref{204}), (\ref{lzb1}), (\ref{lzb2}), (\ref{2615}), and (\ref{ywsi2}) to obtain
\be\la{2618}\ba
\n^{-1}(r,t) = \n^{-1}(y,t)
& \le \frac{1}{y_0} \int_0^{y_0} \n^{-1} dy
+ \int_0^{y_0} |\p_y(\n^{-1})| dy \\
& \le \int_0^{r_0} \n^{-1} \n r dr + C \int_0^{r_0} \n^{-2} |\p_r \n| dr \\
& \le C + C \| \n^{-1} \|^{\alpha+\frac{1}{k}}_{L^\infty(0,r_0)}
\left( \int_0^\infty \n^{k(\alpha-2)+1} |\p_r\n|^{k} r dr \right)^{\frac{1}{k}}
\left( \int_0^{r_0} r^{- \frac{1}{k-1}} dr \right)^{\frac{k-1}{k}} \\
& \le C + C \| \n^{-1} \|^{\alpha+\frac{1}{k}}_{L^\infty(0,r_0)} \\
& \le C + C \| \n^{-1} \|^{\alpha+\frac{1}{k}}_{L^\infty(0,1)},
\ea\ee
owing to $k>2$.

Thus, by Young's inequality and the fact that $\alpha<1-\frac{1}{k}$, we arrive at
\be\la{2619}\ba
\sup_{0 \le t \le T} \| \n^{-1}(r,t) \|_{L^\infty(0,1)} \le C,
\ea\ee
which together with (\ref{2615}) yields (\ref{206}) and completes the proof of Lemma \ref{2l6}.
\end{proof}

Next, under assumptions \eqref{th121}, \eqref{th12}, and \eqref{th122}, we derive the upper and lower bounds for the density.

Note that when $\alpha$ satisfies (\ref{th121}), there exists $k \in [3,\infty)$ such that
\be\la{2k2}\ba
1 - \frac{2k\sqrt{k-1}-4k+4}{(k-2)^2} < \alpha < 1 - \frac{1}{k}.
\ea\ee

\begin{lemma}\la{2l7}
Assume that \eqref{th121} holds and the initial data satisfy \eqref{th12} and \eqref{th122}.
Then there exists a positive constant $C$ depending only on
$\eta$, $T$, $\underline{\n_0}$, $\overline{\n_0}$, $\| \n_0 - \tilde{\n} \|_{H^2}$, $\| \mathbf{u}_0 \|_{H^1}$,
$\| |x|^{\frac{\eta}{2}} (\n_0 - \tilde{\n}) \|_{L^2}$, $\| |x|^{\frac{\eta}{2}} \na \n_0 \|_{L^2}$, $\| |x|^{\frac{\eta}{2}} \mathbf{u}_0 \|_{L^2}$,
$\ga$, $\tilde{\n}$, and $\alpha$ such that
\be\la{207}\ba
\sup_{0 \le t \le T} \left( \| \n \|_{L^\infty(0,\infty)} + \| \n^{-1} \|_{L^\infty(0,\infty)} \right) \le C.
\ea\ee
\end{lemma}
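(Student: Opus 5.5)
The plan is to rerun the argument of Lemmas \ref{2l1}--\ref{2l6} with one change: the Eulerian lower bound on $(1,\infty)$ in Lemma \ref{2l6} needed $\alpha<1-\frac{2}{k}$, and we replace that step by a weighted BD-type estimate that only needs $\alpha<1-\frac{1}{k}$. Fix $k\ge3$ satisfying (\ref{2k2}). Since (\ref{2k2}) gives $f(k)<\alpha$, we have $k<\varphi(\alpha)$. Lemmas \ref{2l1}--\ref{2l4} used only $\alpha>\frac12$ and $k\in[3,\varphi(\alpha))$, so they remain valid under (\ref{th121}). In particular (\ref{203}) and (\ref{204}) hold for this $k$, and the proof of Lemma \ref{2l5} goes through verbatim, giving the upper bound $\sup_t\|\n\|_{L^\infty}\le C$. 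Likewise, the Lagrangian argument (\ref{261501})--(\ref{2619}) for $\|\n^{-1}\|_{L^\infty(0,1)}$ used only $\alpha<1-\frac1k$, (\ref{204}), and a bound on $\|\n^{-1}\|_{L^\infty(1,\infty)}$ (through $r_0$). So it remains to bound $\n^{-1}$ on $(1,\infty)$ without the condition $\alpha<1-\frac2k$.

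\textbf{Step 1: weighted energy estimates.} Multiply $(\ref{nsqdc})_2$ by $u r^{1+\eta}$ and (\ref{yxsdfc1}) by $w r^{1+\eta}$, and integrate.

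For $u$, the extra terms coming from the weight are of the type $\eta\int \n^\alpha |u||\p_r u| r^{\eta}dr$ and $\int \n^\alpha u^2 r^{\eta-1}dr$. Because $\eta\le1$, these are controlled as follows:
\begin{itemize}
\item near $r=0$, by the dissipation $\int\n^\alpha(u^2/r+|\p_r u|^2r)dr$ from (\ref{2101}), using $r^\eta\le r$ for $r\ge1$ and $r^{\eta-1}\le r^{-1}$ for $r\le1$, together with the upper bound on $\n$;
\item the Young splitting of $2(1-\alpha)\int\n^\alpha u\p_r u\, r^{\eta}dr$ goes as in (\ref{214}).
\end{itemize}
The pressure term $-\int \p_r(\n^\ga-\tilde\n^\ga)u r^{1+\eta}dr$ is integrated by parts. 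This produces $\int(\n^\ga-\tilde\n^\ga)(\p_r u+\frac{1+\eta}{r}u)r^{1+\eta}dr$, which is bounded using $\|\n\|_{L^\infty}\le C$ and a Gronwall term $\int|\n-\tilde\n|^2 r^{1+\eta}dr$.

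That last weighted density norm is handled by a weighted version of (\ref{212}): estimate $\frac{d}{dt}\int K(\n) r^{1+\eta}dr$, where the error $\int(\n^\ga-\tilde\n^\ga)u r^{\eta}dr$ is bounded by Cauchy--Schwarz. Combining gives
\[\sup_t\int\bigl(\n u^2+\n w^2+|\n-\tilde\n|^2\bigr)r^{1+\eta}dr\le C,\]
and, as in (\ref{2112}), $\sup_t\int|\p_r\n^{\alpha-\frac12}|^2r^{1+\eta}dr\le C$.

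Initial finiteness comes from (\ref{th122}): note that $w_0$ involves $\na\n_0$ and $\n_0$ is bounded above and below.

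\textbf{Step 2: lower bound on $(1,\infty)$.} I would choose $m$ large and estimate
\[\|(\n^{-\frac1m}-\tilde\n^{-\frac1m})^2\|_{L^\infty(1,\infty)}\le\int_1^\infty|\p_r(\cdots)^2|dr,\]
as in (\ref{2613}). On $\{\n\ge\tilde\n/2\}$ the integrand is bounded by Cauchy--Schwarz with weights:
\[\int_1^\infty|\n-\tilde\n||\p_r\n^{\alpha-\frac12}|dr\le\Bigl(\int|\n-\tilde\n|^2r^{1-\eta}dr\Bigr)^{\frac12}\Bigl(\int|\p_r\n^{\alpha-\frac12}|^2r^{1+\eta}dr\Bigr)^{\frac12}.\]
The first factor is finite by (\ref{2101}) since $r^{1-\eta}\le r$.

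On $\{\n<\tilde\n/2\}$, Hölder with (\ref{204}) gives
\[\Bigl(\int\n^{k(\alpha-2)+1}|\p_r\n|^kr\,dr\Bigr)^{\frac1k}\Bigl(\int_1^\infty\chi\,\n^{-q}r^{-\frac1{k-1}}dr\Bigr)^{\frac{k-1}{k}},\]
where $q\le0$ for $m$ large provided $\alpha<1-\frac1k$. The measure-type integral is then $\int_1^\infty\chi_{\{\n<\tilde\n/2\}}r^{-\frac1{k-1}}dr\le C\int|\n-\tilde\n|^2 r^{1+\eta}\cdot r^{-\eta-1-\frac1{k-1}}dr$, which is finite because $r^{-\eta-1-\frac1{k-1}}\le1$ on $(1,\infty)$. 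This is exactly where the weight $\eta$ replaces the extra factor $r$ that previously forced $\alpha<1-\frac2k$. Taking the $m$-th power yields $\|\n^{-1}\|_{L^\infty(1,\infty)}\le C$, and then (\ref{2618})--(\ref{2619}) finish the proof.

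\textbf{Main obstacle.} I expect the hardest part to be Step 1: closing the weighted $u$-estimate. Two issues arise:
\begin{itemize}
\item The cross term from the weight derivative, $\eta\int\n^\alpha u\,\p_r u\,r^{\eta}dr$, and the $\p_r(\n^\alpha)$-contributions must be absorbed without any smallness. I would bound them crudely by the unweighted dissipation in (\ref{2101}) plus Gronwall, exploiting $r^{\eta}\le r$ on $(1,\infty)$ and the boundedness of the weight on $(0,1)$.
\item For $w$, the weight enters only through transport: $\frac{d}{dt}\int\n w^2r^{1+\eta}$ produces $\int\n u w^2 r^{\eta}dr$. This requires $L^\infty$-type control of $u$ or an $L^3$ bound, which I would supply via (\ref{203}) with $k=3$ and Hölder.
\end{itemize}
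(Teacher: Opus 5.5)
Your Step 1, as sketched, does not close, although the bounds it aims at are true (they are \eqref{2712}--\eqref{2715}).

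On $\{r>1,\ \rho<\tilde\rho/2\}$ the weight-generated cross term $((2+\eta)\alpha-2)\int\rho^\alpha u\,\partial_r u\,r^\eta\,dr$ cannot be handled by the unweighted dissipation plus Gronwall. Pairing $\partial_r u$ with $\int\rho^\alpha|\partial_r u|^2r\,dr$ leaves $\int_1^\infty\rho^\alpha u^2r^{2\eta-1}dr$. That term is controlled neither by $\int\rho^\alpha u^2r^{-1}dr$ (wrong power of $r$) nor by $\int\rho u^2r^{1+\eta}dr$, since $\rho^\alpha\gg\rho$ as $\rho\to0$, which is exactly the regime you want to exclude. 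Absorbing the whole term into the weighted dissipation instead needs $8\alpha-4>\eta^2\alpha^2$, which fails e.g.\ for $\eta=1$ and $\alpha$ close to $9-6\sqrt2$.

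The pressure term has the same defect. After integrating by parts and using only $\|\rho\|_{L^\infty}\le C$, you are left with $\int|\rho^\gamma-\tilde\rho^\gamma|(|\partial_r u|+|u|/r)r^{1+\eta}dr$, with no density weight on $u$. The remainder $\int(\rho^\gamma-\tilde\rho^\gamma)u\,r^\eta dr$ that you quote is also not controllable by itself on the low-density set. You also drop the transport term $\frac\eta2\int\rho u^3r^\eta dr$ of the $u$-equation.

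The paper closes these points as follows:
\begin{itemize}
\item For the cross term it uses the $k=3$ bounds \eqref{272}, notably the space-time bound on $\int\rho^\alpha|\partial_r u|^2|u|\,r\,dr$, together with $\int\chi_{\{\rho<\tilde\rho/2\}}r\,dr\le C$ from \eqref{2670}; see \eqref{278}.
\item For the pressure it cancels the pressure work against $\frac{d}{dt}\int K(\rho)r^{1+\eta}dr$. The remainder $\eta\int(K+\rho^\gamma-\tilde\rho^\gamma)u\,r^\eta dr$ is harmless because $K(\rho)+\rho^\gamma-\tilde\rho^\gamma=\frac{\gamma}{\gamma-1}\rho(\rho^{\gamma-1}-\tilde\rho^{\gamma-1})$ carries a factor $\rho$ (cf.\ \eqref{279}, \eqref{2710}).
\end{itemize}

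More importantly, Step 1 is not needed, and your diagnosis of where $\alpha<1-\frac2k$ came from is off. In Lemma \ref{2l6} that restriction enters only through the evolution estimate \eqref{264}--\eqref{267} for $\int(\rho^{-1/m}-\tilde\rho^{-1/m})^2r\,dr$, where the extra factor $|u|$ costs $\frac1k$. Its output \eqref{2612} is used in \eqref{2613} only on $\{\rho\ge\tilde\rho/2\}$. Your Step 2 never uses it, and neither of its two appeals to the weight is necessary:
\begin{itemize}
\item On $\{\rho\ge\tilde\rho/2\}$ you have $|\rho^{-1/m}-\tilde\rho^{-1/m}|\le C|\rho-\tilde\rho|$, and, using $r\ge1$,
\[
\int_1^\infty|\rho-\tilde\rho|\,|\partial_r\rho^{\alpha-\frac12}|\,dr\le\Big(\int_0^\infty|\rho-\tilde\rho|^2r\,dr\Big)^{\frac12}\Big(\int_0^\infty|\partial_r\rho^{\alpha-\frac12}|^2r\,dr\Big)^{\frac12}\le C
\]
by \eqref{2101}.
\item On $\{\rho<\tilde\rho/2\}$ it suffices that $\int_1^\infty\chi_{\{\rho<\tilde\rho/2\}}r^{-\frac1{k-1}}dr\le\int_0^\infty\chi_{\{\rho<\tilde\rho/2\}}r\,dr\le C$ by \eqref{2670}, together with your exponent condition $1-\alpha-\frac1k-\frac2m\ge0$.
\end{itemize}
So Step 2 with these unweighted inputs, plus Lemmas \ref{2l1}--\ref{2l5} and \eqref{2618}--\eqref{2619}, is already a complete proof. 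It never uses \eqref{th122}.

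This is a genuinely different route from the paper's. The paper takes $m=1$ and absorbs $\|\rho^{-1}-\tilde\rho^{-1}\|_{L^\infty(1,\infty)}^{1+\alpha+\frac1k}$ by Young, which needs $\alpha<1-\frac1k$. On $\{\rho\ge\tilde\rho/2\}$ it discards the factor $|\rho^{-1}-\tilde\rho^{-1}|$, which forces a bound on $\int_1^\infty|\partial_r\rho^{\alpha-\frac12}|dr$. In two dimensions that requires the $r^{1+\eta}$-weighted BD bound \eqref{2715} paired with $\int_1^\infty r^{-1-\eta}dr<\infty$. Within this lemma, that is the only role of the weighted estimates. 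Keeping the difference factor, as you do, avoids them entirely.
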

\begin{proof}
First, following arguments similar to those in Lemmas \ref{2l5} and \ref{2l6}, we obtain
\be\la{271}\ba
\sup_{0 \le t \le T} \left( \| \n \|_{L^\infty(0,\infty)} + \| \n^{-1} \|_{L^\infty(0,1)} \right) \le C.
\ea\ee
Moreover, choosing $k=3$ in (\ref{203}) and (\ref{244}) implies
\be\la{272}\ba
\sup_{0\le t \le T} \int_0^\infty \left( \n |u|^{3} + \n |w|^3 \right) r dr
+ \int_0^T \int_0^\infty \left( \n^\alpha \frac{|u|^{3}}{r} + \n^\alpha |\p_r u|^2 |u| r \right) dr dt
\le C.
\ea\ee
Multiplying $(\ref{nsqdc})_2$ by $r^{1+\eta} u$ and integrating by parts over $(0,\infty)$, we arrive at
\be\la{273}\ba
& \frac{1}{2} \frac{d}{dt} \int_0^\infty \n |u|^2 r^{1+\eta} dr
+ \alpha \int_0^\infty \n^\alpha |\p_r u|^2 r^{1+\eta} dr
+ ( (1+\eta)\alpha - \eta ) \int_0^\infty \n^\alpha |u|^2 r^{\eta-1} dr \\
& = \frac{\eta}{2} \int_0^\infty \n u^3 r^{\eta} dr
- ( (2+\eta) \alpha -2) \int_0^\infty \n^\alpha u (\p_r u) r^{\eta} dr
- \int_0^\infty \p_r(\n^\ga - \tilde{\n}^\ga) u r^{1+\eta} dr.
\ea\ee
From $(\ref{nsqdc})_1$ and the definition of $K(\n)$, we conclude that
\be\la{274}\ba
(K(\n))_t + r^{-1} (K(\n) u r)_r + r^{-1} (\n^\ga - \tilde{\n}^\ga) (u r)_r = 0.
\ea\ee
Multiplying (\ref{274}) by $r^{1+\eta}$ and integrating by parts over $(0,\infty)$ lead to
\be\la{275}\ba
\frac{d}{dt} \int_0^\infty K(\n) r^{1+\eta} dr
= \eta \int_0^\infty K(\n) u r^\eta dr
+ \int_0^\infty \p_r(\n^\ga - \tilde{\n}^\ga) u r^{1+\eta} dr
+ \eta \int_0^\infty (\n^\ga - \tilde{\n}^\ga) u r^\eta dr.
\ea\ee
Substituting (\ref{275}) into (\ref{273}) yields
\be\la{276}\ba
& \frac{d}{dt} \int_0^\infty \left( \frac{1}{2} \n |u|^2 + K(\n) \right) r^{1+\eta} dr
+ \alpha \int_0^\infty \n^\alpha |\p_r u|^2 r^{1+\eta} dr
+ ( (1+\eta) \alpha - \eta ) \int_0^\infty \n^\alpha |u|^2 r^{\eta-1} dr \\
& = \frac{\eta}{2} \int_0^\infty \n u^3 r^\eta dr
- ( (2+\eta) \alpha -2) \int_0^\infty \n^\alpha u (\p_r u) r^\eta dr
+ \eta \int_0^\infty ( K(\n) + \n^\ga - \tilde{\n}^\ga) u r^\eta dr.
\ea\ee
It follows from (\ref{271}) and (\ref{272}) that
\be\la{277}\ba
\int_0^\infty \n u^3 r^\eta dr
& = \int_0^1 \n u^3 r^\eta dr + \int_1^\infty \n u^3 r^\eta dr \\
& \le C \int_0^1 \left( \n^\alpha \frac{|u|^3}{r} + \n |u|^3 r \right) dr
+ C \int_1^\infty \n |u|^3 r dr \\
& \le C + C \int_0^\infty \n^\alpha \frac{|u|^3}{r} dr.
\ea\ee
Using (\ref{271}), (\ref{2670}), (\ref{2101}), and H\"older's inequality, we derive
\be\la{278}\ba
& \int_0^\infty \n^\alpha u (\p_r u) r^\eta dr \\
& = \int_0^1 \n^\alpha u (\p_r u) r^\eta dr + \int_1^\infty \n^\alpha u (\p_r u) r^\eta dr \\
& \le C \int_0^1 \left( \n^\alpha \frac{|u|^2}{r} + \n^\alpha |\p_r u|^2 r \right) dr
+ \int_1^\infty \chi_{(\n \ge \frac{\tilde{\n}}{2})} \n^\alpha |u| |\p_r u| r dr
+ \int_1^\infty \chi_{(\n<\frac{\tilde{\n}}{2})} \n^\alpha |u| |\p_r u| r dr \\
& \le C \int_0^1 \left( \n^\alpha \frac{|u|^2}{r} + \n^\alpha |\p_r u|^2 r \right) dr
+ C \int_0^\infty \left( \n |u|^2 + \n^\alpha  |\p_r u|^2 \right) r dr \\
& \quad + C \left( \int_0^\infty \n^\alpha |\p_r u|^2 |u| r dr \right)^{\frac{1}{2}}
\left( \int_0^\infty \n^{2 \alpha} |u|^2 r dr \right)^{\frac{1}{4}}
\left( \int_0^\infty \chi_{(\n<\frac{\tilde{\n}}{2})} r dr \right)^{\frac{1}{4}} \\
& \le C + C \int_0^\infty \left( \n^\alpha \frac{|u|^2}{r} + \n^\alpha |\p_r u|^2 r \right) dr
+ C \int_0^\infty \n^\alpha |\p_r u|^2 |u| r dr.
\ea\ee
Recalling (\ref{pe}), we have
\be\la{279}\ba
K(\n) = \frac{1}{\ga-1} \left( \n^\ga - \ga \n (\tilde{\n})^{\ga-1} \right) + \tilde{\n}^\ga.
\ea\ee
This, combined with (\ref{271}), (\ref{2101}), (\ref{2670}), and H\"older's inequality, yields
\be\la{2710}\ba
& \int_0^\infty ( K(\n) + \n^\ga - \tilde{\n}^\ga) u r^\eta dr \\
& = \int_0^1 ( K(\n) + \n^\ga - \tilde{\n}^\ga) u r^\eta dr
+ \int_1^\infty ( K(\n) + \n^\ga - \tilde{\n}^\ga) u r^\eta dr \\
& \le C \int_0^1 \n |u| dr
+ C \int_1^\infty \chi_{(\n<\frac{\tilde{\n}}{2})} ( K(\n) + \n^\ga - \tilde{\n}^\ga) u r^\eta dr \\
& \quad + C \int_1^\infty \chi_{(\n \ge \frac{\tilde{\n}}{2})} ( K(\n) + \n^\ga - \tilde{\n}^\ga) u r^\eta dr \\
& \le C + C \int_0^1 \n^\alpha \frac{|u|^2}{r} dr
+ C \int_1^\infty \chi_{(\n<\frac{\tilde{\n}}{2})} \n |u| r dr
+ C \int_1^\infty \chi_{(\n \ge \frac{\tilde{\n}}{2})} |\n - \tilde{\n}| |u| r dr \\
& \le C + C \int_0^1 \n^\alpha \frac{|u|^2}{r} dr
+ C \left( \int_0^\infty \n |u|^2 r dr \right)^{\frac{1}{2}}
\left( \int_0^\infty \chi_{(\n<\frac{\tilde{\n}}{2})} r dr \right)^{\frac{1}{2}} \\
& \quad + C \left( \int_0^\infty |\n - \tilde{\n}|^2 r dr \right)^{\frac{1}{2}}
\left( \int_0^\infty \n |u|^2 r dr \right)^{\frac{1}{2}} \\
& \le C + C \int_0^\infty \n^\alpha \frac{|u|^2}{r} dr.
\ea\ee
Putting (\ref{277}), (\ref{278}), and (\ref{2710}) into (\ref{276}) leads to
\be\la{2711}\ba
& \frac{d}{dt} \int_0^\infty \left( \frac{1}{2} \n |u|^2 + K(\n) \right) r^{1+\eta} dr
+ \alpha \int_0^\infty \n^\alpha |u_r|^2 r^{1+\eta} dr
+ ( (1+\eta) \alpha - \eta ) \int_0^\infty \n^\alpha |u|^2 r^{\eta-1} dr \\
& \le C + C \int_0^\infty \left( \n^\alpha \frac{|u|^2}{r} + \n^\alpha  |\p_r u|^2 r \right) dr
+ C \int_0^\infty \left( \n^\alpha \frac{|u|^3}{r} + \n^\alpha |\p_r u|^2 |u| r \right) dr.
\ea\ee
Integrating (\ref{2711}) over $(0,T)$ and using (\ref{2101}) and (\ref{272}), we arrive at
\be\la{2712}\ba
& \sup_{0 \le t \le T} \int_0^\infty \left( \n |u|^2 + K(\n) \right) r^{1+\eta} dr \le C.
\ea\ee
In addition, multiplying (\ref{yxsdfc1}) by $w r^{1+\eta}$ and integrating by parts over $(0,\infty)$, we obtain
\be\nonumber\ba
\frac{1}{2} \frac{d}{dt} \int_0^\infty \n w^2 r^{1+\eta} dr
+ \int_0^\infty \p_r(\n^\ga) u r^{1+\eta} dr + \int_0^\infty \p_r(\n^\ga) \n^{-1} \p_r(\n^\alpha) r^{1+\eta} dr
= \frac{\eta}{2} \int_0^\infty \n u w^2 r^\eta dr,
\ea\ee
which together with (\ref{271}), (\ref{272}), (\ref{275}), and (\ref{279}) gives
\be\la{2713}\ba
& \frac{d}{dt} \int_0^\infty \left( \frac{1}{2} \n |w|^2 + K(\n) \right) r^{1+\eta} dr
+ \frac{\ga}{\alpha} \int_0^\infty \n^{\ga-\alpha-1} |\p_r \n^\alpha|^2 r^{1+\eta} dr \\
& = \frac{\eta}{2} \int_0^\infty \n u w^2 r^\eta dr
+ \eta \int_0^\infty \left( K(\n) + \n^\ga - \tilde{\n}^\ga \right) u r^\eta dr \\
& \le C \int_0^1 \left( \n \frac{|u|^3}{r} + \n |w|^3 r^{\frac{1+3\eta}{2}} \right) dr
+ C \int_1^\infty \n (|u|^3 + |w|^3) r dr + C
+ C \int_0^\infty \n^\alpha \frac{|u|^2}{r} dr \\
& \le C + C \int_0^\infty \n^\alpha \frac{|u|^3}{r} dr
+ C \int_0^\infty \n^\alpha \frac{|u|^2}{r} dr,
\ea\ee
where in the last inequality we have used the fact that $\frac{1}{3} \le \eta \le 1$ and (\ref{272}).

Integrating (\ref{2713}) over $(0,T)$ and using (\ref{2101}) and (\ref{272}) yield
\be\la{2714}\ba
\sup_{0 \le t \le T} \int_0^\infty \left( \n |w|^2 + K(\n) \right) r^{1+\eta} dr \le C.
\ea\ee
Combining this with (\ref{2712}) and Young's inequality implies
\be\la{2715}\ba
\int_0^\infty |\p_r \n^{\alpha-\frac{1}{2}}|^2 r^{1+\eta} dr
\le C \int_0^\infty \n ( u^2 + w^2) r^{1+\eta} dr \le C.
\ea\ee
For $k$ as in (\ref{2k2}), we use (\ref{ywsi1}), (\ref{204}), (\ref{271}), (\ref{2715}), and Young's inequality to obtain
\be\nonumber\ba
& \| (\n^{-1} - \tilde{\n}^{-1})^2 \|_{L^\infty(1,\infty)} \\
& \le \int_1^\infty |\p_r (\n^{-1} - \tilde{\n}^{-1})^2| dr \\
& \le C \int_1^\infty \n^{-2} |\n^{-1} - \tilde{\n}^{-1}| |\p_r \n| dr \\
& \le C \int_1^\infty \chi_{(0 < \n < \frac{\tilde{\n}}{2})} \n^{-2} |\n^{-1} - \tilde{\n}^{-1}| |\p_r \n| dr
+ C \int_1^\infty \chi_{(\n \ge \frac{\tilde{\n}}{2})} \n^{-2} |\n^{-1} - \tilde{\n}^{-1}| |\p_r \n| dr \\
& \le C \int_1^\infty \chi_{(0 < \n < \frac{\tilde{\n}}{2})} (\n^{-1} - \tilde{\n}^{-1})^3 |\p_r \n| dr
+ C \int_1^\infty \chi_{(\n \ge \frac{\tilde{\n}}{2})} |\p_r \n^{\alpha-\frac{1}{2}}| dr \\
& \le C \| \n^{-1} - \tilde{\n}^{-1} \|_{L^\infty(1,\infty)}^{1+\alpha+\frac{1}{k}}
\int_1^\infty \chi_{(0 < \n < \frac{\tilde{\n}}{2})} (\n^{-1} - \tilde{\n}^{-1})^{2-\alpha-\frac{1}{k}} |\p_r \n| dr \\
& \quad + C \int_1^\infty \left( |\p_r \n^{\alpha-\frac{1}{2}}|^2 r^{1+\eta} + r^{-1-\eta} \right) dr \\
& \le C \| \n^{-1} - \tilde{\n}^{-1} \|_{L^\infty(1,\infty)}^{1+\alpha+\frac{1}{k}}
\int_1^\infty \chi_{(0 < \n < \frac{\tilde{\n}}{2})} \n^{\alpha-2+\frac{1}{k}} |\p_r \n| dr
+ C \\
& \le C \| \n^{-1} - \tilde{\n}^{-1} \|_{L^\infty(1,\infty)}^{1+\alpha+\frac{1}{k}}
\left( \int_1^\infty \n^{k(\alpha-2)+1} |\p_r\n|^{k} dr \right)^{\frac{1}{k}}
\left( \int_1^\infty \chi_{(0 < \n < \frac{\tilde{\n}}{2})} dr \right)^{\frac{k-1}{k}}
+ C \\
& \le C \| \n^{-1} - \tilde{\n}^{-1} \|_{L^\infty(1,\infty)}^{1+\alpha+\frac{1}{k}} + C \\
& \le \frac{1}{2} \| \n^{-1} - \tilde{\n}^{-1} \|_{L^\infty(1,\infty)}^{2} + C,
\ea\ee
which yields
\be\la{2716}\ba
\| \n^{-1} - \tilde{\n}^{-1} \|_{L^\infty(1,\infty)}^2 \le C.
\ea\ee
This implies that
\be\la{2717}\ba
\| \n^{-1} \|_{L^\infty(1,\infty)} \le \| \n^{-1} - \tilde{\n}^{-1} \|_{L^\infty(1,\infty)} + \tilde{\n}^{-1} \le C.
\ea\ee
Combining (\ref{271}) with (\ref{2717}) gives (\ref{207}) and completes the proof of Lemma \ref{2l7}.
\end{proof}

\section{Upper Bound and Lower Bound of the Density ($N=3$)}
This section is devoted to deriving both the upper bound and the positive lower bound of the density for the case $\OM=\mathbb{R}^3$.
Suppose that the initial data $(\n_0,\mathbf{u}_0)$ satisfies (\ref{th12}).
Let $(\n,\mathbf{u})$ be a spherically symmetric classical solution to (\ref{ns})--(\ref{qkjbjtj}), (\ref{nxxs}) on $\mathbb{R}^3 \times (0,T]$, provided by Lemmas \ref{lct} and \ref{dcx}.

We first establish the standard energy estimate and the BD entropy estimate.
\begin{lemma}\la{3l1}
There exists a positive constant $C$ depending only on
$T$, $\underline{\n_0}$, $\overline{\n_0}$, $\| \n_0 - \tilde{\n} \|_{H^1}$, $\| \mathbf{u}_0 \|_{L^2}$, $\ga$, $\tilde{\n}$, and $\alpha$ such that
\be\la{301}\ba
& \sup_{0\le t \le T} \int_0^\infty \left( \n u^2 + |\p_r \n^{\alpha-\frac{1}{2}}|^2 + |\sqrt{\n} - \sqrt{\tilde{\n}}|^2 + |\n^{\alpha-\frac{1}{2}} - \tilde{\n}^{\alpha-\frac{1}{2}}|^6 \right) r^{2} dr \\
& + \int_0^T \int_0^\infty \left( \n^{\alpha} \frac{u^2}{r^2} + \n^{\alpha} |\p_r u|^2 + \n^{\ga-\alpha-1} |\p_r \n^\alpha|^2 \right) r^{2} dr dt
\le C.
\ea\ee
\end{lemma}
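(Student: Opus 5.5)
We follow the proof of Lemma \ref{2l1}, now with the weight $r^2$ coming from $N=3$. The plan has three steps: a basic energy estimate, a BD entropy estimate for the effective velocity $w$, and a Sobolev step in $\mathbb{R}^3$.

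\textbf{Energy estimate.} We multiply $(\ref{nsqdc})_2$ by $r^2 u$ and integrate by parts over $(0,\infty)$. The boundary terms vanish because $u(0)=0$ (Lemma \ref{dcx}) and because of the decay at infinity. This gives
\begin{equation*}
\frac{d}{dt}\int_0^\infty\Big(\tfrac12\n u^2+K(\n)\Big)r^2dr+\alpha\int_0^\infty \n^\alpha\big(|\p_r u|^2r^2+2u^2\big)dr = c(\alpha)\int_0^\infty \n^\alpha u\,\p_r u\, r\,dr .
\end{equation*}
Here $c(\alpha)$ is a multiple of $(1-\alpha)$, arising from the term $\frac{2}{r}(\n^\alpha)_r u$ and from expanding $(r^{-2}\n^\alpha(r^2u)_r)_r$. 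The pressure term is converted into $\frac{d}{dt}\int K(\n)r^2dr$ by the three-dimensional analogue of (\ref{212}).

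Young's inequality bounds the right-hand side by
\begin{equation*}
\frac{c(\alpha)^2}{4\theta}\int_0^\infty \n^\alpha u^2\,dr+\theta\int_0^\infty \n^\alpha|\p_r u|^2r^2\,dr .
\end{equation*}
Absorbing this requires roughly $c(\alpha)^2<8\alpha^2$, i.e. $2(1-\alpha)^2\lesssim 4\alpha^2$. This is the step I expect to be the main algebraic check. Since $\alpha>0.67$ in (\ref{th11}), $1-\alpha$ is small, so there is plenty of room. If the cross term does not close directly, I would instead use $\n^\alpha|\mathbb{D}\mathbf u|^2$, which is a nonnegative quadratic form, together with $\lambda+\frac{2}{3}\mu\ge0$. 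Integrating in time then gives the $\n u^2$ bound, the $K(\n)$ bound, and the two dissipation terms in (\ref{301}).

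\textbf{BD entropy.} We multiply (\ref{yxsdfc1}) by $w r^2$ and use the three-dimensional version of (\ref{212}), exactly as in (\ref{2110}). This yields
\begin{equation*}
\sup_{0\le t\le T}\int_0^\infty \n w^2r^2\,dr+\int_0^T\!\!\int_0^\infty \n^{\ga-\alpha-1}|\p_r\n^\alpha|^2r^2\,dr\,dt\le C .
\end{equation*}
Since $\n^{-1}\p_r\n^\alpha=w-u$, we have $|\p_r\n^{\alpha-\frac12}|^2 \le C\n(u^2+w^2)$, which gives the gradient term in (\ref{301}).

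\textbf{Sobolev step.} Near $\tilde\n$, $K(\n)\sim(\n-\tilde\n)^2$; for $\n\ge2\tilde\n$, $K(\n)\gtrsim \n^\ga$. Since $\ga>1$, in both regimes $|\sqrt\n-\sqrt{\tilde\n}|^2\le CK(\n)$. This gives the third term in (\ref{301}).

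For the last term, we would like the Sobolev inequality $\|f\|_{L^6(\mathbb{R}^3)}\le C\|\na f\|_{L^2}$ with $f=\n^{\alpha-\frac12}-\tilde\n^{\alpha-\frac12}$. This needs $f\to0$ at infinity in a suitable sense (e.g. $f\in L^6$ or $f\in L^2+L^6$). We check this as follows:
\begin{itemize}
\item On the set $\{\n<2\tilde\n\}$, $|f|\le C|\n-\tilde\n|$, which is controlled in $L^2$ by the $K(\n)$ bound.
\item On the set $\{\n\ge2\tilde\n\}$, $|f|\le C\n^{\alpha-\frac12}\le C\n^{\ga/2}$, using $2\alpha-1<1<\ga$. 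This is also in $L^2$ by the $K(\n)$ bound.
\end{itemize}
So $f\in L^2$, and a density argument with the homogeneous Sobolev inequality gives $\|f\|_{L^6}\le C\|\na\n^{\alpha-\frac12}\|_{L^2}\le C$. Combining the three steps yields (\ref{301}).
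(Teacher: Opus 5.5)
Your three steps (radial energy estimate, BD entropy via $w$, Sobolev in $\mathbb{R}^3$) are exactly the paper's proof. Your check that $\n^{\alpha-\frac12}-\tilde\n^{\alpha-\frac12}\in L^2$ before invoking $\|f\|_{L^6}\le C\|\na f\|_{L^2}$ is care the paper skips. However, the step you flag as the key algebraic check is miscomputed.

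Multiply $(\ref{nsqdc})_2$ by $r^2u$. The viscous term gives $\alpha\int\n^\alpha(r\p_r u+2u)^2dr$, and the term $\frac{2}{r}(\n^\alpha)_r u$ gives $-2\int\n^\alpha(u^2+2ru\p_r u)\,dr$. The correct identity is therefore
\begin{equation*}
\frac{d}{dt}\int_0^\infty\Big(\tfrac12\n u^2+K(\n)\Big)r^2dr+\alpha\int_0^\infty\n^\alpha|\p_r u|^2r^2dr+2(2\alpha-1)\int_0^\infty\n^\alpha u^2dr=4(1-\alpha)\int_0^\infty\n^\alpha u\,\p_r u\,r\,dr .
\end{equation*}
So the zeroth-order dissipation has coefficient $2(2\alpha-1)$, not $2\alpha$. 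You cannot convert one into the other by integrating the cross term by parts, since that produces a term $(\n^\alpha)_r r u^2$ which the energy level does not control. The absorption condition is therefore $16(1-\alpha)^2<8\alpha(2\alpha-1)$, i.e. $\alpha>\frac23$. The paper accordingly picks $\ep$ with $\frac{4(1-\alpha)^2}{\alpha-\ep}<2(2\alpha-1)-\ep$. This still holds under (\ref{th11}), so the lemma goes through. But there is not "plenty of room": $\frac23$ is the exact threshold, barely below $0.67661$, and this is where the lower bound on $\alpha$ is genuinely used in this lemma.

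Your fallback has the same issue. In the paper's normalization the stress is $\mu\mathbb{D}\mathbf u+\lambda\div\mathbf u\,\mathbb{I}$, with no factor $2$, and $\mathbb{D}\mathbf u=\na\mathbf u$ for radial fields. The dissipation density is then $\n^\alpha\big(|\na\mathbf u|^2+(\alpha-1)(\div\mathbf u)^2\big)\ge(3\alpha-2)\n^\alpha|\na\mathbf u|^2$, cf. (\ref{h112}). The relevant condition is thus $\mu+3\lambda>0$, again $\alpha>\frac23$. The condition $\lambda+\frac23\mu\ge0$ belongs to the $2\mu\mathbb{D}\mathbf u$ convention and would wrongly give $\alpha\ge\frac13$. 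Moreover, mere nonnegativity yields the bounds on $\n u^2$ and $K(\n)$ but not the dissipation terms in (\ref{301}); for those you need strict coercivity. With this correction, the rest of your argument matches the paper: the BD entropy step, $|\p_r\n^{\alpha-\frac12}|^2\le C\n(u^2+w^2)$, the $K$-bounds, and the Sobolev step.
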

\begin{proof}
First, arguing as in the proof of Lemma \ref{2l1}, we can obtain the standard energy estimates
\be\la{311}\ba
& \frac{d}{dt} \int_0^\infty \left( \frac{1}{2} \n u^2 + K(\n) \right) r^2 dr
+ 2(2\alpha-1) \int_0^\infty \n^{\alpha} u^2 dr
+ \alpha \int_0^\infty \n^{\alpha} |\p_r u|^2 r^2 dr \\
& = 4(1-\alpha) \int_0^\infty \n^\alpha u (\p_r u) r dr.
\ea\ee
By virtue of Young's inequality, we have for any $0<\ep<\alpha$,
\be\la{312}\ba
4(1-\alpha) \int_0^\infty \n^\alpha u (\p_r u) r dr
\le \frac{4(1-\alpha)^2}{\alpha-\ep} \int_0^\infty \n^{\alpha} u^2 dr
+ (\alpha-\ep) \int_0^\infty \n^{\alpha} |\p_r u|^2 r^2 dr.
\ea\ee
Since $\alpha>\frac{2}{3}$, we can choose sufficiently small $0<\ep<\alpha$ such that
\be\nonumber
\frac{4(1-\alpha)^2}{\alpha-\ep}<2(2\alpha-1) - \ep.
\ee
For such $\ep$, we conclude from (\ref{311}) and (\ref{312}) that
\be\la{313}\ba
\sup_{0\le t \le T} \int_0^\infty \left( \n u^2 + K(\n) \right) r^2 dr
+ \int_0^T \int_0^\infty \left( \n^{\alpha} \frac{u^2}{r^2} + \n^{\alpha} |\p_r u|^2 \right) r^2 dr dt
\le C.
\ea\ee
Combining (\ref{216}) with (\ref{313}) leads to
\be\la{314}\ba
\int_{( \n < 2 \tilde{\n} )} |\n - \tilde{\n}|^2 r^2 dr
+ \int_{( \n \ge 2 \tilde{\n} )} |\n - \tilde{\n}|^\ga r^2 dr \le C.
\ea\ee
This directly gives
\be\la{315}\ba
\int_0^\infty |\sqrt{\n} - \sqrt{\tilde{\n}}|^2 r^2 dr
& = \int_{( \n < 2 \tilde{\n} )} |\sqrt{\n} - \sqrt{\tilde{\n}}|^2 r^2 dr
+ \int_{( \n \ge 2 \tilde{\n} )} |\sqrt{\n} - \sqrt{\tilde{\n}}|^2 r^2 dr \\
& \le C \int_{( \n < 2 \tilde{\n} )} |\n - \tilde{\n}|^2 r^2 dr
+ C \int_{( \n \ge 2 \tilde{\n} )} |\n - \tilde{\n}|^\ga r^2 dr \\
& \le C,
\ea\ee
which implies
\be\la{316}\ba
\sup_{0 \le t \le T} \int_0^\infty |\sqrt{\n} - \sqrt{\tilde{\n}}|^2 r^2 dr \le C.
\ea\ee
Furthermore, multiplying $(\ref{yxsdfc1})$ by $w r^2$ and integrating by parts over $(0,\infty)$, we arrive at
\be\la{317}\ba
\frac{1}{2} \frac{d}{d t} \int_0^\infty \n w^2 r^2 dr
& = - \int_0^\infty \left( \p_r (\n^\ga) u r^2 + \p_r (\n^\ga) \n^{-1} \p_r(\n^\alpha) r^2 \right) dr \\
& = - \frac{d}{dt} \int_0^\infty K(\n) r^2 dr
- \frac{\ga}{\alpha} \int_0^\infty \n^{\ga-\alpha-1} |\p_r \n^\alpha|^2 r^2 dr.
\ea\ee
Integrating (\ref{317}) over $(0,T)$, we obtain
\be\la{318}\ba
\sup_{0 \le t \le T} \int_0^\infty \left( \frac{1}{2} \n w^2 + K(\n) \right) r^2 dr
+ \frac{\ga}{\alpha} \int_0^T \int_0^\infty \n^{\ga-\alpha-1} |\p_r \n^\alpha|^2 r^2 dr dt \le C.
\ea\ee
It follows from (\ref{313}), (\ref{318}), and Cauchy's inequality that
\be\la{319}\ba
\int_0^\infty |\p_r \n^{\alpha-\frac{1}{2}}|^2 r^2 dr
\le C \int_0^\infty \n \left( u^2 + w^2 \right) r^2 dr \le C.
\ea\ee
This, combined with the Sobolev inequality, yields
\be\la{3110}\ba
\| \n^{\alpha-\frac{1}{2}} - \tilde{\n}^{\alpha-\frac{1}{2}} \|_{L^6(\mathbb{R}^3)}^2
& \le C \| \na (\n^{\alpha-\frac{1}{2}} - \tilde{\n}^{\alpha-\frac{1}{2}}) \|_{L^2(\mathbb{R}^3)}^2 \\
& \le C \int_0^\infty |\p_r \n^{\alpha-\frac{1}{2}}|^2 r^2 dr \\
& \le C,
\ea\ee
which gives
\be\la{3111}\ba
\sup_{0 \le t \le T} \int_0^\infty |\n^{\alpha-\frac{1}{2}} - \tilde{\n}^{\alpha-\frac{1}{2}}|^6 r^2 dr \le C.
\ea\ee
The combination of (\ref{313}), (\ref{316}), (\ref{318}), (\ref{319}), and (\ref{3111}) implies (\ref{301}) and finishes the proof of Lemma \ref{3l1}.
\end{proof}

\begin{lemma}\la{3l2}
For any $\xi>0$, there exists a positive constant $C$ depending only on
$T$, $\xi$, $\underline{\n_0}$, $\overline{\n_0}$, $\| \n_0 - \tilde{\n} \|_{H^1}$, $\| \mathbf{u}_0 \|_{L^2}$, $\ga$, $\tilde{\n}$, and $\alpha$ such that
\be\la{302}\ba
\sup_{0 \le t \le T} \left( \| \n r^{\frac{1+\xi}{2\alpha-1}} \|_{L^\infty(0,1)} + \| \n \|_{L^\infty(1,\infty)} \right)
+ \int_0^T \| \n r^{\frac{1+\xi}{\alpha+\ga-1}} \|_{L^\infty(0,1)}^{\alpha+\ga-1} dt \le C.
\ea\ee
\end{lemma}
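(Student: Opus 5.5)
We prove the three estimates in \eqref{302} separately. Each uses Lemma \ref{ywsi} together with the bounds of Lemma \ref{3l1}; the weight $r^2$ replaces the weight $r$ used in Lemma \ref{2l2}.

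\emph{Weighted bound near the origin.} Put $f\triangleq \n^{\alpha-\frac12} r^{\frac{1+\xi}{2}}$ on $(0,1)$. Squaring $\|f\|_{L^\infty(0,1)}\le C$ gives exactly $\|\n r^{\frac{1+\xi}{2\alpha-1}}\|_{L^\infty(0,1)}\le C$. By \eqref{ywsi2} with $r_1=0$, $r_2=1$,
\[
|f(r)|\le C\int_0^1 \n^{\alpha-\frac12} r^{\frac{\xi-1}{2}}dr + C\int_0^1 |\p_r \n^{\alpha-\frac12}| r^{\frac{1+\xi}{2}}dr .
\]
For the derivative term, Cauchy--Schwarz with the weight $r^2$ gives $\big(\int|\p_r\n^{\alpha-\frac12}|^2r^2dr\big)^{1/2}\big(\int_0^1 r^{\xi-1}dr\big)^{1/2}\le C$, by \eqref{301} and $\xi>0$. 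For the first term, write $\n^{\alpha-\frac12}\le |\n^{\alpha-\frac12}-\tilde\n^{\alpha-\frac12}|+C$ and apply H\"older with exponents $6$ and $\frac65$ against the $L^6(r^2dr)$ bound in \eqref{301}. The remaining power of $r$ is $\frac65\big(\frac{\xi-1}{2}-\frac13\big)=\frac{3\xi-5}{5}$, which exceeds $-1$ precisely because $\xi>0$, so this term is bounded as well.

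\emph{Bound away from the origin.} Set $h\triangleq\sqrt\n-\sqrt{\tilde\n}$. Then $|\p_r h|\le C\n^{1-\alpha}|\p_r\n^{\alpha-\frac12}|$, and \eqref{ywsi1} gives
\[
\|h\|^2_{L^\infty(1,\infty)}\le C\int_1^\infty |h|\,\n^{1-\alpha}|\p_r\n^{\alpha-\frac12}|\,r^2dr .
\]
We bound $\n^{1-\alpha}\le C\big(1+\|h\|_{L^\infty(1,\infty)}^{2-2\alpha}\big)$. Cauchy--Schwarz with $\int|h|^2r^2dr\le C$ and $\int|\p_r\n^{\alpha-\frac12}|^2r^2dr\le C$ from \eqref{301} then yields $\|h\|_{L^\infty(1,\infty)}^2\le C+C\|h\|_{L^\infty(1,\infty)}^{2-2\alpha}$. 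Since $2-2\alpha<2$, Young's inequality gives $\|h\|_{L^\infty(1,\infty)}\le C$, hence $\|\n\|_{L^\infty(1,\infty)}\le C$.

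\emph{Time-integrated bound.} Here we use the dissipation term in \eqref{301}, which equals $c\int_0^T\!\!\int|\p_r \n^{\frac{\ga+\alpha-1}{2}}|^2r^2drdt$. Put $g\triangleq\n^{\frac{\ga+\alpha-1}{2}}r^{\frac{1+\xi}{2}}$, so that $\|g\|_{L^\infty(0,1)}^2=\|\n r^{\frac{1+\xi}{\alpha+\ga-1}}\|^{\alpha+\ga-1}_{L^\infty(0,1)}$. We repeat the first step for $g$ and square. The derivative term is controlled after integrating in time, again because $\int_0^1 r^{\xi-1}dr<\infty$. The main obstacle is the lower-order term $\big(\int_0^1\n^{\frac{\ga+\alpha-1}{2}}r^{\frac{\xi-1}{2}}dr\big)^2$, for which the energy bounds alone are not enough near $r=0$. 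I would first try to write $\n^{\frac{\ga+\alpha-1}{2}}=\n^{\theta\frac{\ga+\alpha-1}{2}}\cdot\n^{(1-\theta)\frac{\ga+\alpha-1}{2}}$. The first factor is absorbed by $\|g\|^{\theta}_{L^\infty(0,1)}$ through Young's inequality, which is allowed since $\theta<1$. The second factor is controlled either by the weighted bound $\n\le Cr^{-\frac{1+\xi'}{2\alpha-1}}$ from the first step (with $\xi'$ small) or by $\int(\n-\tilde\n)^{\ga}\chi_{(\n\ge2\tilde\n)}r^2dr\le C$, combined through H\"older's inequality. We then choose $\theta$ and $\xi'$ so that the resulting power of $r$ is integrable on $(0,1)$. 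I expect the constraint $\ga<6\alpha-3$ to be exactly what makes this exponent bookkeeping close. If it does not, the fallback is to use instead the $L^6(r^2dr)$ bound on $\n^{\alpha-\frac12}$, i.e.\ $\n^{6\alpha-3}\in L^1(r^2dr)$, in place of the $L^\ga$ bound, which again requires $\ga<6\alpha-3$.
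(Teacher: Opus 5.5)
Your weighted bound near the origin is essentially the paper's argument. Your bound on $(1,\infty)$ is a valid alternative: the paper instead applies H\"older to $\rho^{\alpha-\frac12}-\tilde\rho^{\alpha-\frac12}$ against the $L^6(r^2dr)$ bound and $\int_1^\infty r^{-4}dr$, and your absorption argument with $h=\sqrt\rho-\sqrt{\tilde\rho}$ works equally well.

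\textbf{The gap is in the time-integrated bound.} The lower-order term $\big(\int_0^1\rho^s r^{\frac{\xi-1}{2}}dr\big)^2$, with $s=\frac{\alpha+\gamma-1}{2}$, is left unresolved, and the scheme you sketch fails for small $\xi$. Small $\xi$ is covered by the lemma and is what Lemma~\ref{3l3} uses (small $\xi_0$ in \eqref{3310}).

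After your splitting, the leftover weight is $r^\beta$ with $\beta+1=\frac{(1-\theta)(1+\xi)}{2}$, and the leftover power is $\rho^{(1-\theta)s}$. So the factor $1-\theta$ cancels from every integrability condition: $\|g\|_{L^\infty(0,1)}$ and the lower-order term have the same homogeneity in $(\rho,r)$, and absorbing one into the other gains nothing. Concretely:
\begin{itemize}
\item the pointwise bound $\rho\le Cr^{-\frac{1+\xi'}{2\alpha-1}}$ (or, with identical scaling, $\rho^{6\alpha-3}\in L^1(r^2dr)$) requires $(1+\xi)(2\alpha-1)>(1+\xi')(\alpha+\gamma-1)$;
\item the $L^\gamma(r^2dr)$ bound requires $(1+\xi)\gamma>3(\alpha+\gamma-1)$;
\item any H\"older mixture of these requires a convex combination of the two conditions;
\item even the time-integrated bound $\rho^{\alpha+\gamma-1}\in L^1(r^2drdt)$ would need $\xi>2$.
\end{itemize}
For small $\xi$ the first condition fails because $\gamma>\alpha$, and the second because $2\gamma+3\alpha-3>0$. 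The condition $\gamma<6\alpha-3$ plays no role here. The lemma does not assume it; it enters only in Lemmas~\ref{3l3}--\ref{3l4}.

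\textbf{The missing ingredient} is the Sobolev inequality on the unit ball. It turns the time-integrated $H^1$ control of $\rho^s$ into a bound in $L^2(0,T;L^6(B_1))$, so the energy bounds are enough after all. Your own H\"older step from the first part (remaining exponent $\frac35\xi-1>-1$) gives
\[
\Big(\int_0^1\rho^s r^{\frac{\xi-1}{2}}dr\Big)^2\le C\Big(\int_0^1\rho^{6s}r^2dr\Big)^{\frac13}\le C\int_0^1\big(\rho^{2s}+|\partial_r\rho^s|^2\big)r^2dr .
\]
Both terms on the right are integrable in time:
\begin{itemize}
\item the gradient term is, up to a constant, the BD dissipation $\rho^{\gamma-\alpha-1}|\partial_r\rho^\alpha|^2$ in \eqref{301};
\item since $0<\alpha+\gamma-1<\gamma$, we get $\int_0^T\!\int_0^1\rho^{\alpha+\gamma-1}r^2drdt\le C+C\int_0^T\!\int_0^1\chi_{(\rho\ge2\tilde\rho)}|\rho-\tilde\rho|^{\gamma}r^2drdt\le C$ by \eqref{314}.
\end{itemize}
This is exactly how the paper proceeds: it proves \eqref{321} for an arbitrary power $s$ and applies it with $s=\alpha-\frac12$ and $s=\frac{\alpha+\gamma-1}{2}$.
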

\begin{proof}
It follows from (\ref{ywsi2}) and H\"older's inequality that for any $\xi>0$,
\be\la{321}\ba
& \left\| \n^{s} r^{\frac{1+\xi}{2}} \right\|_{L^\infty(0,1)}^2 \\
& \le C \left( \int_0^1 \n^{s} r^{\frac{1+\xi}{2}} dr \right)^2
+ C \left( \int_0^1 \left( |\p_r \n^{s}| r^{\frac{1+\xi}{2}}
+ \n^{s} r^{\frac{\xi-1}{2}} \right) dr \right)^2 \\
& \le C \left( \int_0^1 \n^{s} r^{\frac{\xi-1}{2}} dr \right)^2
+ C \left( \int_0^1 |\p_r \n^{s}| r^{\frac{1+\xi}{2}} dr \right)^2 \\
& \le C \left( \int_0^1 \n^{6s} r^2 dr \right)^{\frac{1}{3}}
\left( \int_0^1 r^{\frac{3}{5}\xi-1} dr \right)^{\frac{5}{3}}
+ C \int_0^1 |\p_r \n^{s}|^2 r^2 dr \int_0^1 r^{\xi-1} dr \\
& \le C \left( \int_0^1 \n^{6s} r^2 dr \right)^{\frac{1}{3}} + C \int_0^1 |\p_r \n^s|^2 r^2 dr \\
& \le C \int_0^1 \n^{2s} r^2 dr + C \int_0^1 |\p_r \n^s|^2 r^2 dr.
\ea\ee
On the one hand, choosing $s=\alpha-\frac{1}{2}$ in (\ref{321}) and using (\ref{301}), we obtain
\be\la{322}\ba
\left\| \n r^{\frac{1+\xi}{2\alpha-1}} \right\|_{L^\infty(0,1)}^{2\alpha-1}
& = \left\| \n^{\alpha-\frac{1}{2}} r^{\frac{1+\xi}{2}} \right\|^2_{L^\infty(0,1)} \\
& \le C \int_0^1 \n^{2\alpha-1} r^2 dr + C \int_0^1 |\p_r \n^{\alpha-\frac{1}{2}}|^2 r^2 dr \\
& \le C + C \int_0^1 \chi_{(\n \ge 2 \tilde{\n})} \n^{2\alpha-1} r^2 dr \\
& \le C + C \int_0^1 \chi_{(\n \ge 2 \tilde{\n})} \n r^2 dr \\
& \le C + C \int_0^1 |\sqrt{\n} - \sqrt{\tilde{\n}}|^2 r^2 dr \\
& \le C,
\ea\ee
since $0<2\alpha-1<1$.

On the other hand, choosing $s=\frac{\alpha+\ga-1}{2}$ in (\ref{321}), integrating over $(0,T)$, and using (\ref{301}) and (\ref{314}), we arrive at
\be\la{323}\ba
& \int_0^T \| \n r^{\frac{1+\xi}{\alpha+\ga-1}} \|_{L^\infty(0,1)}^{\alpha+\ga-1} dt \\
& = \int_0^T \| \n^{ \frac{\alpha+\ga-1}{2} } r^{\frac{1+\xi}{2}} \|_{L^\infty(0,1)}^2 dt \\
& \le C \int_0^T \int_0^1 \left( \n^{\alpha+\ga-1} + |\p_r \n^\frac{\alpha+\ga-1}{2}|^2 \right) r^2 dr dt \\
& \le C + C \int_0^T \int_0^1 \chi_{(\n \ge 2 \tilde{\n})} \n^{\alpha+\ga-1} r^2 dr dt
+ C \int_0^1 \n^{\ga-\alpha-1} |\p_r \n^\alpha|^2 r^2 dr \\
& \le C + C \int_0^T \int_0^1 \chi_{(\n \ge 2 \tilde{\n})} |\n - \tilde{\n}|^\ga r^2 dr dt \\
& \le C,
\ea\ee
owing to $0<\alpha+\ga-1<\ga$.

Moreover, we use (\ref{301}), (\ref{ywsi1}), and H\"older's inequality to derive
\be\la{324}\ba
& \| (\n^{\alpha-\frac{1}{2}} - \tilde{\n}^{\alpha-\frac{1}{2}})^2 \|_{L^\infty(1,\infty)} \\
& \le \int_1^\infty | \p_r(\n^{\alpha-\frac{1}{2}} - \tilde{\n}^{\alpha-\frac{1}{2}})^2 | dr \\
& \le C \int_1^\infty | \n^{\alpha-\frac{1}{2}} - \tilde{\n}^{\alpha-\frac{1}{2}} | |\p_r \n^{\alpha-\frac{1}{2}}| dr \\
& \le C \left( \int_1^\infty | \n^{\alpha-\frac{1}{2}} - \tilde{\n}^{\alpha-\frac{1}{2}} |^6 r^2 dr \right)^{\frac{1}{6}}
\left( \int_1^\infty |\p_r \n^{\alpha-\frac{1}{2}}|^2 r^2 dr \right)^{\frac{1}{2}}
\left( \int_1^\infty r^{-4} dr \right)^{\frac{1}{3}} \\
& \le C.
\ea\ee
This implies that
\be\la{325}\ba
\| \n \|_{L^\infty(1,\infty)} = \| \n^{\alpha-\frac{1}{2}} \|^{\frac{2}{2\alpha-1}}_{L^\infty(1,\infty)}
\le \left( \| \n^{\alpha-\frac{1}{2}} - \tilde{\n}^{\alpha-\frac{1}{2}} \|_{L^\infty(1,\infty)} + \tilde{\n}^{\alpha-\frac{1}{2}} \right)^{\frac{2}{2\alpha-1}} \le C,
\ea\ee
which together with (\ref{322}) and (\ref{323}) yields (\ref{302}), thereby completing the proof of Lemma \ref{3l2}.
\end{proof}

To obtain the upper bound for $\n$, we require additional integrability of $\n u$ and $\n w$.
For $k \in (2,\infty)$, we set
\be\nonumber
g(k) \triangleq 1 - \frac{\sqrt{2k^3-k^2-2k+1}-3k+3}{(k-2)^2}.
\ee
It is easy to check that $g(k)$ is strictly increasing on $(2,\infty)$ and satisfies
\be\nonumber
\lim_{k \to 2^+} g(k) = \frac{2}{3}, \quad \lim_{k \to \infty} g(k) = 1.
\ee
Let $\psi(z) : (\frac{2}{3},1) \to (2,\infty)$ be the inverse function of $g(k)$.
Then $\psi(z)$ is strictly increasing on $(\frac{2}{3},1)$ and satisfies
\be\nonumber
\lim_{z \to \frac{2}{3}^+} \psi(z) = 2, \quad \lim_{z \to 1^-} \psi(z) = \infty.
\ee

Define
\be\la{rt}
R_T \triangleq 1 + \sup_{0 \le t \le T} \| \n \|_{L^\infty(0,1)}.
\ee

Next, we establish the additional integrability of the momentum $\n u$.

\begin{lemma}\la{3l3}
Assume that \eqref{th11} holds. For any $k \in [3,\psi(\alpha))$, there exists a constant $0 \le \si < (3\alpha-2)(1-\frac{2}{k})$ such that
\be\la{303}\ba
\sup_{0\le t \le T} \int_0^\infty \n |u|^{k} r^2 dr
+ \int_0^T \int_0^\infty \left( \n^\alpha |u|^{k} + \n^\alpha |\p_r u|^2 |u|^{k-2} r^2 \right) dr dt
\le C R_T^{k\si},
\ea\ee
where $C$ depends only on $T$, $k$, $\underline{\n_0}$, $\overline{\n_0}$, $\| \n_0 - \tilde{\n} \|_{H^1}$, $\| \mathbf{u}_0 \|_{H^2}$, $\ga$, $\tilde{\n}$, and $\alpha$.
\end{lemma}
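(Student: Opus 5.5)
Multiply $(\ref{nsqdc})_2$ (with $N=3$) by $|u|^{k-2}u\,r^2$ and integrate over $(0,\infty)$, as in Lemma \ref{2l3}. This gives
\begin{equation*}
\begin{aligned}
&\frac1k\frac{d}{dt}\int_0^\infty \n|u|^k r^2dr + c_1\int_0^\infty \n^\alpha|u|^k dr + (k-1)\alpha\int_0^\infty \n^\alpha|u|^{k-2}|\p_r u|^2r^2dr \\
&\quad = c_2\int_0^\infty \n^\alpha|u|^{k-2}u\,\p_r u\, r\,dr + \int_0^\infty \n^\ga\,\p_r(|u|^{k-2}u r^2)\,dr,
\end{aligned}
\end{equation*}
with explicit constants $c_1,c_2$ depending on $\alpha$ and $k$. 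The viscous cross term is absorbed by Young's inequality. The condition $k<\psi(\alpha)$, i.e.\ $g(k)<\alpha$, should be exactly the discriminant condition making the resulting quadratic form in $(|u|/r,\p_r u)$ positive definite. I will check that $g(k)=\alpha$ is the root of that discriminant, analogous to (\ref{233}). This leaves a coercive term $\ep\int(\n^\alpha|u|^k+\n^\alpha|u|^{k-2}|\p_ru|^2r^2)dr$ on the left.

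Next, I control the pressure terms $\int \n^\ga|u|^{k-2}|\p_ru|r^2$ and $\int\n^\ga|u|^{k-1}r$, splitting into $(0,1)$ and $(1,\infty)$. On $(1,\infty)$, $\n$ is bounded by (\ref{302}). There I argue as in (\ref{2362})--(\ref{2363}), using (\ref{301}) and Gr\"onwall, to reduce to $C+C\int\n|u|^kr^2$ plus absorbable terms. On $(0,1)$, Young's inequality bounds the terms by $\frac\ep2\int_0^1(\n^\alpha|u|^k+\n^\alpha|u|^{k-2}|\p_ru|^2r^2)dr + C\int_0^1\n^{k(\ga-\alpha)+\alpha}r^{k}\,dr$.

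The main obstacle is bounding $\int_0^1 \n^{k(\ga-\alpha)+\alpha}r^k dr$. In 3D we only have the weighted bound $\n\le C r^{-(1+\xi)/(2\alpha-1)}$ from (\ref{302}), together with the time-integrated bound on $\|\n r^{(1+\xi)/(\alpha+\ga-1)}\|_{L^\infty(0,1)}^{\alpha+\ga-1}$. My plan is to interpolate as follows:
\begin{itemize}
\item write $\n^{k(\ga-\alpha)+\alpha}=\n^{\theta}\cdot\n^{k(\ga-\alpha)+\alpha-\theta}$;
\item bound the first factor by $R_T^{\theta}$ (recall (\ref{rt}));
\item bound the remaining power by the weighted estimates, keeping the $r$-powers integrable against $r^k$ (and using the time-integrable bound, so Gr\"onwall still closes).
\end{itemize}
The restriction $\ga<6\alpha-3$, i.e.\ $\frac{\ga-\alpha}{1}<\ldots$, should allow this with exponent $\theta=k\si$ and $\si<(3\alpha-2)(1-\frac2k)$. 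Concretely, I expect $r^{k}\n^{k(\ga-\alpha)+\alpha-k\si}\le C$ near $0$ once $\frac{1+\xi}{2\alpha-1}(k(\ga-\alpha-\si)+\alpha)\le k+1$, with $\xi$ small. I would first try the choice $\si=\max\{0,\ga-\alpha-(2\alpha-1)+\delta\}$ and verify it satisfies $\si<(3\alpha-2)(1-2/k)$ using $\ga<6\alpha-3$. If this fails, I would instead put part of the pressure weight onto the $L^1_t$ bound from (\ref{302}).

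With these bounds, the differential inequality becomes
$$\frac{d}{dt}\int\n|u|^kr^2+\ep\,\mathcal D\le C R_T^{k\si}+C\int\n|u|^kr^2+C\int\n^\alpha|\p_ru|^2r^2,$$
where $\mathcal D$ denotes the coercive dissipation. Gr\"onwall's inequality together with (\ref{301}) and $\int\n_0|u_0|^kr^2\le C\|\mathbf u_0\|_{H^2}^k$ then yields (\ref{303}), using $R_T\ge1$.
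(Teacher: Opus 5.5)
Your setup matches the paper's: the multiplier $|u|^{k-2}u\,r^2$, the absorption condition $k^2(1-\alpha)^2<(4\alpha-2)(k-1)\alpha$ (which is indeed equivalent to $g(k)<\alpha$), the treatment of $(1,\infty)$, and the Young reduction to $\int_0^1\rho^{k(\gamma-\alpha)+\alpha}r^k\,dr$.

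The gap is in the step that carries the whole lemma. Your concrete plan controls $\rho^{k(\gamma-\alpha)+\alpha-k\sigma}$ entirely by the pointwise bound $\rho\le Cr^{-(1+\xi)/(2\alpha-1)}$ from \eqref{302}. Integrability against $r^k$ then requires, letting $\xi\to0$, that $k\sigma>k\gamma-3k\alpha+k+1-\alpha$. Combined with $k\sigma<(3\alpha-2)(k-2)$, this is possible only when
\[
\gamma<6\alpha-3-\frac{5\alpha-3}{k}.
\]
Since $5\alpha-3>0$ and $k<\psi(\alpha)<\infty$, this misses an interval of $\gamma$ just below $6\alpha-3$ for every admissible $k$. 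It is essentially the restriction of \cite{HMZ} recalled in Remark \ref{rk3}, which the paper is improving. In particular, your trial choice $\sigma=\gamma-3\alpha+1+\delta$ needs $\delta>(1-\alpha)/k$ for integrability. It then violates $\sigma<(3\alpha-2)(1-\frac{2}{k})$ when $\gamma$ is near $6\alpha-3$, so the step ``verify it using $\gamma<6\alpha-3$'' fails.

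Your fallback of using the $L^1_t$ bound is the right idea, but it must be made precise, because it is exactly where the full range comes from. Peel off exactly one factor $\rho^{\alpha+\gamma-1}$:
\[
\rho^{k(\gamma-\alpha)+\alpha}r^k=\rho^{k\sigma}\cdot\rho^{(k-1)\gamma-k\alpha-k\sigma+1}\cdot\rho^{\alpha+\gamma-1}\,r^k .
\]
Bound the three factors as follows.
\begin{itemize}
\item The first factor is at most $R_T^{k\sigma}$.
\item The second is controlled by the pointwise bound with weight $r^{(1+\xi_0)/(2\alpha-1)}$.
\item The third is at most $h(t)\,r^{-(1+\xi_0)}$, where $h(t)=\|\rho\, r^{(1+\xi_0)/(\alpha+\gamma-1)}\|_{L^\infty(0,1)}^{\alpha+\gamma-1}\in L^1(0,T)$ by \eqref{302}.
\end{itemize}
The time-integrated bound costs only $r^{-(1+\xi_0)}$ instead of $r^{-(1+\xi_0)(\alpha+\gamma-1)/(2\alpha-1)}$. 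The integrability condition then becomes $(k-1)\gamma<3k\alpha-k+k\sigma-1$. As $\sigma\uparrow(3\alpha-2)(1-\frac{2}{k})$ the right side tends to $(k-1)(6\alpha-3)$, so an admissible $\sigma$ exists precisely when $\gamma<6\alpha-3$.

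If, for your chosen $\sigma$, the middle exponent is negative, first use $\rho^{a}\le 1+\rho^{b}$ with $a=k(\gamma-\alpha)+\alpha\le b=k\sigma+\gamma+\alpha-1$, as in the paper's Case 2. Accordingly, the source term in your final differential inequality is $C R_T^{k\sigma}(1+h(t))$, not $C R_T^{k\sigma}$. Gr\"onwall still closes since $h\in L^1(0,T)$.
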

\begin{proof}
For any $k \in [3,\psi(\alpha))$, multiplying $(\ref{nsqdc})_2$ by $|u|^{k-2}u r^2$ and integrating the resulting equation over $(0,\infty)$, we obtain after integration by parts that
\be\la{331}\ba
& \frac{1}{k} \frac{d}{dt} \int_0^\infty \n |u|^{k} r^2 dr
+ (4\alpha-2) \int_0^\infty \n^\alpha |u|^k dr
+ (k-1) \alpha \int_0^\infty \n^\alpha |u|^{k-2} |\p_r u|^2 r^2 dr \\
& = 2k (1-\alpha) \int_0^\infty \n^\alpha |u|^{k-2} u (\p_r u) r dr
+ (k-1) \int_0^\infty \n^\ga |u|^{k-2} (\p_r u) r^2 dr
+ 2 \int_0^\infty \n^{\ga} |u|^{k-2} u r dr.
\ea\ee
Since $3 \le k <\psi(\alpha)$, we have
\be\la{332}
1 - \frac{\sqrt{2k^3-k^2-2k+1}-3k+3}{(k-2)^2} < \alpha,
\ee
which implies
\be\la{333}
k^2 (1-\alpha)^2 < (4\alpha-2)(k-1)\alpha.
\ee
Hence, we can choose sufficiently small $\ep>0$ such that
\be\la{334}\ba
\frac{k^2(1-\alpha)^2}{(1-2\ep)(4\alpha-2)} \le (1-2\ep) (k-1) \alpha,
\ea\ee
which together with Young's inequality yields
\be\la{335}\ba
& 2k (1-\alpha) \n^\alpha |u|^{k-2} u (\p_r u) r \\
& \le (1-2\ep) (4\alpha-2) \n^\alpha |u|^k
+ \frac{k^2(1-\alpha)^2}{(1-2\ep)(4\alpha-2)} \n^{\alpha} |u|^{k-2} |\p_r u|^2 r^2 \\
& \le (1-2\ep) (4\alpha-2) \n^\alpha |u|^k
+ (1-2\ep) (k-1) \alpha \n^{\alpha} |u|^{k-2} |\p_r u|^2 r^2.
\ea\ee
Substituting (\ref{335}) into (\ref{331}) and using (\ref{302}) and Young's inequality, we derive
\be\la{336}\ba
& \frac{1}{k} \frac{d}{dt} \int_0^\infty \n |u|^{k} r^2 dr
+ \ep \int_0^\infty \n^\alpha |u|^k dr
+ 2 \ep \int_0^\infty \n^\alpha |u|^{k-2} |\p_r u|^2 r^2 dr \\
& \le (k-1) \int_0^\infty \n^\ga |u|^{k-2} (\p_r u) r^2 dr
+ 2 \int_0^\infty \n^{\ga} |u|^{k-2} u r dr \\
& = (k-1) \int_0^1 \n^\ga |u|^{k-2} (\p_r u) r^2 dr
+ 2 \int_0^1 \n^{\ga} |u|^{k-2} u r dr \\
& \quad + (k-1) \int_1^\infty \n^\ga |u|^{k-2} (\p_r u) r^2 dr
+ 2 \int_1^\infty \n^{\ga} |u|^{k-2} u r dr \\
& \le \frac{\ep}{2} \int_0^1 \n^\alpha |u|^k dr
+ \frac{\ep}{2} \int_0^1 \n^\alpha |u|^{k-2} |\p_r u|^2 r^2 dr
+ C \int_0^1 \n^{ k(\ga-\alpha)+\alpha } r^{k} dr \\
& \quad + (k-1) \int_1^\infty \n^\ga |u|^{k-2} (\p_r u) r^2 dr
+ C \int_1^\infty \n |u|^{2} r^2 dr + C \int_1^\infty \n |u|^{k} r^2 dr,
\ea\ee
which together with (\ref{301}) gives
\be\la{3361}\ba
& \frac{1}{k} \frac{d}{dt} \int_0^\infty \n |u|^{k} r^2 dr
+ \frac{\ep}{2} \int_0^\infty \n^\alpha |u|^k dr
+ \frac{3\ep}{2} \int_0^\infty \n^\alpha |u|^{k-2} |\p_r u|^2 r^2 dr \\
& \le C \int_0^1 \n^{ k(\ga-\alpha)+\alpha } r^{k} dr
+ (k-1) \int_1^\infty \n^\ga |u|^{k-2} (\p_r u) r^2 dr
+ C + C \int_1^\infty \n |u|^{k} r^2 dr.
\ea\ee
Note that, if $k \in [3,4]$, we use (\ref{301}), (\ref{302}) and the fact that $\alpha<1<\ga$ to obtain
\be\la{3362}\ba
& (k-1) \int_1^\infty \n^\ga |u|^{k-2} (\p_r u) r^2 dr \\
& \le C \int_1^\infty \n^{\ga} |u|^{2k-4} r^2 dr
+ C \int_1^\infty \n^{\ga} |\p_r u|^{2} r^2 dr \\
& \le C \int_1^\infty \n |u|^{2} r^2 dr
+ C \int_1^\infty \n |u|^{k} r^2 dr
+ C \int_1^\infty \n^\alpha |\p_r u|^{2} r^2 dr \\
& \le C + C \int_1^\infty \n |u|^{k} r^2 dr
+ C \int_1^\infty \n^\alpha |\p_r u|^{2} r^2 dr.
\ea\ee
On the other hand, if $k>4$, (\ref{301}), (\ref{302}), and Young's inequality ensure that
\be\la{3363}\ba
& (k-1) \int_1^\infty \n^\ga |u|^{k-2} (\p_r u) r^2 dr \\
& \le \ep \int_1^\infty \n^\alpha |u|^{k-2} |\p_r u|^2 r^2 dr
+ C \int_1^\infty \n^{2\ga-\alpha} |u|^{k-2} r^2 dr \\
& \le \ep \int_1^\infty \n^\alpha |u|^{k-2} |\p_r u|^2 r^2 dr
+ C \int_1^\infty \n |u|^{2} r^2 dr
+ C \int_1^\infty \n |u|^{k} r^2 dr \\
& \le \ep \int_1^\infty \n^\alpha |u|^{k-2} |\p_r u|^2 r^2 dr
+ C + C \int_1^\infty \n |u|^{k} r^2 dr.
\ea\ee
Combining (\ref{3361}), (\ref{3362}), and (\ref{3363}) leads to
\be\la{3364}\ba
& \frac{1}{k} \frac{d}{dt} \int_0^\infty \n |u|^{k} r^2 dr
+ \frac{\ep}{2} \int_0^\infty \n^\alpha |u|^k dr
+ \frac{\ep}{2} \int_0^\infty \n^\alpha |u|^{k-2} |\p_r u|^2 r^2 dr \\
& \le C \int_0^1 \n^{ k(\ga-\alpha)+\alpha } r^{k} dr
+ C \int_1^\infty \n^\alpha |\p_r u|^{2} r^2 dr
+ C \int_1^\infty \n |u|^{k} r^2 dr + C.
\ea\ee

We now estimate the first term in the last line of (\ref{3364}).
Since $1 < \ga < 6\alpha-3$, we can choose $0 \le \si < (3\alpha-2)(1-\frac{2}{k})$ such that
\be\la{337}\ba
(k-1)\ga < 3k\alpha-k+k\sigma-1.
\ea\ee
We distinguish the following two cases.

\vspace{0.5cm}

\textit{Case 1: $k\alpha+k\si-1 \le (k-1)\ga <3k\alpha-k+k\sigma-1$.}
In this case, we have
\be\la{338}
k - \frac{(k-1)\ga-k\alpha-k\si+1}{2\alpha-1} - 1 > -1.
\ee
Hence, we can choose $0<\xi_0<k-1$ sufficiently small such that
\be\la{339}
k-\left( \frac{(k-1)\ga-k\alpha-k\si+1}{2\alpha-1} + 1 \right) \left(1+\xi_0\right) > -1.
\ee
By virtue of (\ref{302}), (\ref{339}), and H\"older's inequality, it holds that
\be\la{3310}\ba
& \int_0^1 \n^{ k(\ga-\alpha)+\alpha } r^{k} dr \\
& \le C R_T^{k\si} \left\|\n r^{\frac{1+\xi_0}{2\alpha-1}} \right\|_{L^\infty(0,1)}^{(k-1)\ga-k\alpha-k\si+1}
\int_0^1 \n^{\alpha+\ga-1} r^{k-\left( \frac{(k-1)\ga-k\alpha-k\si+1}{2\alpha-1} \right) \left(1+\xi_0\right) } dr \\
& \le C R_T^{k\si} \left\|\n r^{\frac{1+\xi_0}{\alpha+\ga-1}} \right\|_{L^\infty(0,1)}^{\alpha+\ga-1}
\int_0^1 r^{k-\left( \frac{(k-1)\ga-k\alpha-k\si+1}{2\alpha-1} + 1 \right) \left(1+\xi_0\right) } dr \\
&\leq C R_T^{k\si} \left\|\n r^{\frac{1+\xi_0}{\alpha+\ga-1}} \right\|_{L^\infty(0,1)}^{\alpha+\ga-1}.
\ea\ee

\vspace{0.5cm}

\textit{Case 2: $(k-1)\ga < k\alpha+k\si-1$.}
It follows from (\ref{302}) and Young's inequality that
\be\la{3311}\ba
\int_0^1 \n^{ k(\ga-\alpha)+\alpha } r^{k} dr
& = \int_0^1 \n^{ (k-1)\ga + \ga - k\alpha+\alpha } r^{k} dr \\
& \le C + C \int_0^1 \n^{ k\si+\ga+\alpha-1 } r^{k} dr \\
& \le C + C R_T^{k\si}
\left\|\n r^{\frac{k}{\alpha+\ga-1}} \right\|_{L^\infty(0,1)}^{\alpha+\ga-1}.
\ea\ee
Combining (\ref{3364}), (\ref{3310}), and (\ref{3311}) and using the fact that $0<\xi_0<k-1$, we obtain
\be\la{3312}\ba
& \frac{1}{k} \frac{d}{dt} \int_0^\infty \n |u|^{k} r^2 dr
+ \frac{\ep}{2} \int_0^\infty \n^{\alpha} |u|^{k} dr
+ \ep \int_0^\infty \n^{\alpha} |\p_r u|^2  |u|^{k-2} r^2 dr \\
& \le C + C R_T^{k\si} \left\|\n r^{\frac{1+\xi_0}{\alpha+\ga-1}} \right\|_{L^\infty(0,1)}^{\alpha+\ga-1}
+ C \int_1^\infty \n^\alpha |\p_r u|^{2} r^2 dr
+ C \int_1^\infty \n |u|^{k} r^2 dr,
\ea\ee
which together with (\ref{301}), (\ref{302}), and Gr\"onwall's inequality yields
\be\la{3313}\ba
\sup_{0\le t \le T} \int_0^\infty \n |u|^{k} r^2 dr
+ \int_0^T \int_0^\infty \left( \n^\alpha |u|^{k} + \n^\alpha |\p_r u|^2 |u|^{k-2} r^2 \right) dr dt
\le C R_T^{k\si}.
\ea\ee
This gives (\ref{303}) and completes the proof of Lemma \ref{3l3}.
\end{proof}

\begin{lemma}\la{3l4}
Assume that \eqref{th11} holds. For $\si$ determined in Lemma \ref{3l3} and any $k \in [3,\psi(\alpha))$, there exists a positive constant $C$ depending only on
$T$, $k$, $\underline{\n_0}$, $\overline{\n_0}$, $\| \n_0 - \tilde{\n} \|_{H^3}$, $\| \mathbf{u}_0 \|_{H^2}$, $\ga$, $\tilde{\n}$, and $\alpha$ such that
\be\la{304}\ba
\sup_{0 \le t \le T} \int_0^\infty \n^{k(\alpha-2)+1} |\p_r\n|^{k} r^2 dr
+ \int_0^T \int_0^\infty \n^{k(\alpha-2)+\ga+1-\alpha} |\p_r\n|^{k} r^2 dr dt
\le C R_T^{k\si}.
\ea\ee
\end{lemma}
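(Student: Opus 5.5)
The plan follows Lemma \ref{2l4} in the two-dimensional case, with $r$ replaced by $r^2$ and the bounds of Lemma \ref{3l3} carrying the factor $R_T^{k\si}$. By (\ref{yxsd}) and (\ref{yxsdfc1}), the effective velocity satisfies (\ref{2401}):
\[
\n w_t+\n u w_r+\tfrac{\ga}{\alpha}\n^{\ga+1-\alpha}(w-u)=0.
\]
Multiply this by $|w|^{k-2}wr^2$ and integrate over $(0,\infty)$. The continuity equation $(\ref{nsqdc})_1$ in the form $(\n r^2)_t+(\n u r^2)_r=0$ turns the transport part into $\frac1k\frac{d}{dt}\int_0^\infty\n|w|^kr^2dr$. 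Young's inequality on the coupling term then gives
\[
\frac1k\frac{d}{dt}\int_0^\infty \n|w|^kr^2dr+\frac{\ga}{2\alpha}\int_0^\infty\n^{\ga+1-\alpha}|w|^kr^2dr\le C\int_0^\infty \n^{\ga+1-\alpha}|u|^kr^2dr .
\]

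The main obstacle is the right-hand side near the origin, since only $\int_0^T\!\int\n^\alpha|u|^k\,dr\,dt$ is controlled, without the weight $r^2$. I split at $r=1$.

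On $(1,\infty)$, (\ref{302}) gives $\n\le C$, so the contribution is at most $C\int_1^\infty\n|u|^kr^2dr\le CR_T^{k\si}$ by (\ref{303}).

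On $(0,1)$, I write
\[
\n^{\ga+1-\alpha}|u|^kr^2=\n^{\alpha}|u|^k\cdot \n^{\ga+1-2\alpha}r^2 ,
\]
and try to bound $\n^{\ga+1-2\alpha}r^2$ in $L^\infty(0,1)$ by the weighted bound $\|\n r^{\frac{1+\xi}{2\alpha-1}}\|_{L^\infty(0,1)}\le C$ from (\ref{302}). This needs
\[
\frac{(1+\xi)(\ga+1-2\alpha)}{2\alpha-1}\le 2,
\]
i.e.\ $\ga<6\alpha-3$ with $\xi$ small, which is exactly the standing hypothesis (\ref{th11}). If the exponent $\ga+1-2\alpha$ is negative, I use $\n\ge$ nothing but instead the trivial case; since $\ga>1>2\alpha-1$, it is positive, so this is fine. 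Hence
\[
\int_0^1\n^{\ga+1-\alpha}|u|^kr^2dr\le C\int_0^1\n^\alpha|u|^kdr,
\]
whose time integral is $\le CR_T^{k\si}$ by (\ref{303}). If the exponent turned out borderline, the fallback would be to interpolate with the time-integrated bound $\int_0^T\|\n r^{\frac{1+\xi}{\alpha+\ga-1}}\|^{\alpha+\ga-1}_{L^\infty(0,1)}dt\le C$ from (\ref{302}), as in Case 1 of Lemma \ref{3l3}, absorbing any excess power of $\n$ into $R_T$.

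Integrating in time with the initial data in $H^3$, so that $\int\n_0|w_0|^kr^2dr\le C$, yields
\[
\sup_{0\le t\le T}\int_0^\infty\n|w|^kr^2dr+\int_0^T\!\!\int_0^\infty\n^{\ga+1-\alpha}|w|^kr^2drdt\le CR_T^{k\si}.
\]
Finally, $\n^{-1}\p_r\n^\alpha=w-u$, so
\[
\n|w-u|^k=\alpha^k\n^{k(\alpha-2)+1}|\p_r\n|^k .
\]
Combining the $w$-bounds with (\ref{303}) and with the near-origin estimate above (used once more for the time-integrated $u$ term) gives (\ref{304}).
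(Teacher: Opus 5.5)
Your proposal is correct and follows essentially the same route as the paper. It uses the $L^k$ estimate for $w$ with weight $r^2$, splits $\int_0^\infty \rho^{\gamma+1-\alpha}|u|^k r^2\,dr$ at $r=1$, and controls the near-origin part through $\|\rho r^{\frac{1+\xi_1}{2\alpha-1}}\|_{L^\infty(0,1)}$ with the paper's choice $\xi_1=\frac{6\alpha-3-\gamma}{\gamma+1-2\alpha}>0$, which makes the leftover weight exactly $r^2$ (so your fallback is never needed), before recovering $\rho^{k(\alpha-2)+1}|\partial_r\rho|^k$ from $w-u=\alpha\rho^{\alpha-2}\partial_r\rho$; the garbled remark about a negative exponent is moot since $\gamma+1-2\alpha>0$.
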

\begin{proof}
For any $k \in [3,\psi(\alpha))$, multiplying (\ref{yxsdfc1}) by $|w|^{k-2} w r^2$ and integrating by parts over $(0,\infty)$, we derive after using (\ref{yxsd}) and Young's inequality that
\be\la{341}\ba
& \frac{1}{k} \frac{d}{dt} \int_0^\infty \n |w|^{k} r^2 dr
+ \frac{\ga}{\alpha} \int_0^\infty \n^{\ga+1-\alpha} |w|^{k} r^2 dr \\
& = \frac{\ga}{\alpha} \int_0^\infty \n^{\ga+1-\alpha} |w|^{k-2} w u r^2 dr \\
& \le \frac{\ga}{2\alpha} \int_0^\infty \n^{\ga+1-\alpha} |w|^{k} r^2 dr
+ C \int_0^\infty \n^{\ga+1-\alpha} |u|^{k} r^2 dr.
\ea\ee
For $\xi_1 = \frac{6\alpha-3-\ga}{\ga+1-2\alpha}>0$, we deduce from (\ref{302}) and (\ref{303}) that
\be\la{342}\ba
& \int_0^\infty \n^{\ga+1-\alpha} |u|^{k} r^2 dr \\
& \le C \int_0^1 \n^{\ga+1-\alpha} |u|^{k} r^2 dr + C \int_1^\infty \n^{\ga+1-\alpha} |u|^{k} r^2 dr \\
& \le C \left\|\n r^{\frac{1+\xi_1}{2\alpha-1}} \right\|_{L^\infty(0,1)}^{\ga+1-2\alpha}
\int_0^1 \n^{\alpha} |u|^{k} dr
+ C \| \n \|_{L^\infty(1,\infty)}^{\ga-\alpha} \int_1^\infty \n |u|^{k} r^2 dr \\
& \le C \int_0^1 \n^{\alpha} |u|^{k} dr + C R_T^{k\si}.
\ea\ee
Integrating (\ref{342}) over $(0,T)$ and applying (\ref{303}), we arrive at
\be\la{343}\ba
\sup_{0 \le t \le T} \int_0^\infty \n |w|^{k} r^2 dr
+ \int_0^T \int_0^\infty \n^{\ga+1-\alpha} |w|^{k} r^2 dr dt
\le C R_T^{k\si}.
\ea\ee
This, combined with (\ref{yxsd}), (\ref{303}), (\ref{342}), and (\ref{343}), leads to
\be\la{344}\ba
\sup_{0 \le t \le T} \int_0^\infty \n^{1-k} |\p_r \n^\alpha|^k r^2 dr
+ \int_0^T \int_0^\infty \n^{\ga+1-\alpha-k} |\p_r \n^\alpha|^k r^2 dr dt
\le C R_T^{k\si},
\ea\ee
which yields (\ref{304}) and completes the proof of Lemma \ref{3l4}.
\end{proof}

\begin{lemma}\la{3l5}
Assume that \eqref{th11} holds.
Then there exists a positive constant $C$ depending only on
$T$, $\underline{\n_0}$, $\overline{\n_0}$, $\| \n_0 - \tilde{\n} \|_{H^3}$, $\| \mathbf{u}_0 \|_{H^2}$, $\ga$, $\tilde{\n}$, and $\alpha$ such that
\be\la{305}\ba
\sup_{0 \le t \le T} \| \n \|_{L^\infty(0,\infty)} \le C.
\ea\ee
\end{lemma}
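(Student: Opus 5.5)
The plan is to mimic the proof of Lemma \ref{2l5}, now with the $R_T$-dependent bound of Lemma \ref{3l4}. At the end I close the estimate by exploiting the smallness condition $\si<(3\alpha-2)(1-\frac{2}{k})$. Since $\| \n \|_{L^\infty(1,\infty)}\le C$ by Lemma \ref{3l2}, it suffices to bound $R_T$.

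Fix $k\in[3,\psi(\alpha))$ and $t\in[0,T]$, and apply (\ref{ywsi2}) on $(0,1)$:
\be\nonumber
\| \n \|_{L^\infty(0,1)} \le \int_0^1 \n \,dr + \int_0^1 |\p_r \n| \,dr .
\ee
For the first integral I use Lemma \ref{3l2} with a small $\xi$: since $\frac{1+\xi}{2\alpha-1}<\frac{1}{0.35}<3$, we get $\n\le Cr^{-\frac{1+\xi}{2\alpha-1}}$. To stay safe, I will instead interpolate, $\n\le \| \n\|_{L^\infty(0,1)}^{1-\theta}(\n r^{\frac{1+\xi}{2\alpha-1}})^{\theta}r^{-\theta\frac{1+\xi}{2\alpha-1}}$ with $\theta$ small. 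This gives $\int_0^1\n\,dr\le C R_T^{1-\theta}$, with an exponent strictly below $1$.

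For the second integral, Hölder gives
\be\nonumber
\int_0^1|\p_r\n|dr\le \Big(\int_0^1 \n^{k(\alpha-2)+1}|\p_r\n|^k r^2dr\Big)^{\frac1k}\Big(\int_0^1 \n^{\frac{k(2-\alpha)-1}{k-1}} r^{-\frac{2}{k-1}}dr\Big)^{\frac{k-1}{k}} .
\ee
The first factor is at most $CR_T^{\si}$ by Lemma \ref{3l4}. In the second factor, $r^{-\frac{2}{k-1}}$ is integrable because $k\ge3$ gives $\frac{2}{k-1}\le1$; if $k=3$ exactly, I take $k$ slightly larger than $3$, which the range allows. I bound $\n^{\frac{k(2-\alpha)-1}{k-1}}$ by splitting it into a power of $R_T$ times a weighted power controlled via Lemma \ref{3l2}, keeping the $r$-weights integrable. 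Crudely, the second factor is $\le C R_T^{(2-\alpha)-\frac1k}$ times a small loss, which is too big. So I must use the weighted bound $\n\le Cr^{-\frac{1+\xi}{2\alpha-1}}$ on part of the power, and I want the total exponent of $R_T$ to be
\be\nonumber
\si+\frac{k-1}{k}\beta<1 ,
\ee
where $\beta$ is the portion of $\frac{k(2-\alpha)-1}{k-1}$ carried by $R_T$. The remaining portion $\frac{k(2-\alpha)-1}{k-1}-\beta$ is carried by the weight. Its integrability requires $\big(\frac{k(2-\alpha)-1}{k-1}-\beta\big)\frac{1+\xi}{2\alpha-1}+\frac{2}{k-1}<1$.

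Optimizing $\beta$ and letting $\xi\to0$, the requirement becomes $\si<1-\frac{k-1}{k}\beta_{\min}$. I expect this to reduce exactly to $\si<(3\alpha-2)(1-\frac2k)$, which is precisely why that threshold appears in Lemma \ref{3l3}. Verifying this algebra is the main obstacle. If it fails by a margin, the fallback is to use the $r^2$-weight differently: apply (\ref{ywsi2}) to $\n^{\alpha-\frac12}$ or a similar power rather than to $\n$, or split at a $t$-dependent radius.

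Once $\| \n\|_{L^\infty(0,1)}\le C+CR_T^{\kappa}$ holds for some $\kappa<1$ uniformly in $t$, taking the supremum in $t$ gives $R_T\le C+CR_T^{\kappa}$. Young's inequality then gives $R_T\le C$, and combining this with Lemma \ref{3l2} yields (\ref{305}).
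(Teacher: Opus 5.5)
Your proposal is correct and is essentially the paper's proof. The paper applies (\ref{ywsi2}) to $\n^{\de}$ with $\si<\de<(3\alpha-2)(1-\frac{2}{k})$ rather than to $\n$. That is exactly your splitting with $\beta=\frac{k}{k-1}(1-\de)$: the $R_T$-exponent becomes $\si+1-\de<1$. The algebra you left open does close. Your integrability constraint gives $\beta>\beta_{\min}=\frac{3k(1-\alpha)+6\alpha-4}{k-1}$, and $\si+\frac{k-1}{k}\beta_{\min}<1$ is precisely $\si<(3\alpha-2)(1-\frac{2}{k})$.

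The only difference is the zeroth-order term. The paper bounds $\int_0^1\n^{\de}dr\le C$ via the $L^6$ bound in (\ref{301}), while your interpolation gives $CR_T^{1-\theta}$, which is equally sufficient for the final absorption. You were also right that $k>3$ is needed.
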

\begin{proof}
For $\si$ determined in Lemma \ref{3l3}, which satisfies $0 \le \si < \left(3\alpha-2\right)\left(1-\frac{2}{k}\right)$, we can choose $\de$ such that
\be\la{351}\ba
\si < \de < \left(3\alpha-2\right)\left(1-\frac{2}{k}\right),
\ea\ee
which implies that
\be\la{352}\ba
-\frac{2}{k-1} - \left( \frac{k}{k-1}(\de-\alpha)+1 \right) \frac{1}{2\alpha-1} > -1.
\ea\ee
Thus, we can choose $\xi_2>0$ sufficiently small such that
\be\la{353}
-\frac{2}{k-1} - \left( \frac{k}{k-1}(\de-\alpha)+1 \right) \frac{1+\xi_2}{2\alpha-1} > -1.
\ee
By virtue of (\ref{301}), (\ref{302}), (\ref{304}), (\ref{353}), and the Sobolev embedding, we obtain
\be\la{354}\ba
\| \n^\de \|_{L^\infty(0,1)}
& \le C \int_0^1 \n^\de dr + C \int_0^1 |\p_r \n^\de| dr \\
& \le C \left( \int_0^1 \n^{6\alpha-3} r^2 dr \right)^{\frac{\de}{6\alpha-3}}
\left( \int_0^1 r^{-\frac{2\de}{6\alpha-3-\de}} dr \right)^{\frac{6\alpha-3-\de}{6\alpha-3}}
+ C \int_0^1 \n^{\de-1} |\p_r \n| dr \\
& \le C + C \int_0^1 |\n^{\alpha-\frac{1}{2}} - \tilde{\n}^{\alpha-\frac{1}{2}}|^6 r^2 dr \\
& \quad + C \left( \int_0^1 \n^{k(\alpha-2)+1} |\p_r\n|^{k} r^2 dr \right)^{\frac{1}{k}}
\left( \int_0^1 \n^{ \frac{k}{k-1}(\de-\alpha)+1 } r^{-\frac{2}{k-1}} dr \right)^{\frac{k-1}{k}} \\
& \le C + C R_T^\si \left( \left\| \n r^{\frac{1+\xi_2}{2\alpha-1}} \right\|_{L^\infty(0,1)}^{\frac{k}{k-1}(\de-\alpha)+1}
\int_0^1 r^{-\frac{2}{k-1} - \left( \frac{k}{k-1}(\de-\alpha)+1 \right) \frac{1+\xi_2}{2\alpha-1} } dr \right)^{\frac{k-1}{k}} \\
& \le C R_T^\si.
\ea\ee
From (\ref{354}) and (\ref{rt}), we conclude that
\be\la{355}
R_T^\de \le C R_T^\si,
\ee
which together with (\ref{351}) yields
\be\la{356}
\| \n \|_{L^\infty(0,1)} \le C.
\ee
This, combined with (\ref{302}), gives (\ref{305}) and finishes the proof of Lemma \ref{3l5}.
\end{proof}

Next, we derive the crucial lower bound for the density.
Note that, for $k_2>3$ given in (\ref{k2}), we have
\be\nonumber
1 - \frac{\sqrt{2k_2^3-k_2^2-2k_2+1}-3k_2+3}{(k_2-2)^2} = 1 - \frac{1}{k_2},
\ee
and for any $k>k_2$,
\be\nonumber
1 - \frac{\sqrt{2k^3-k^2-2k+1}-3k+3}{(k-2)^2} < 1 - \frac{1}{k}.
\ee
Thus, for any $1-\frac{1}{k_2}<\alpha<1$, there exists $k>k_2>3$ such that
\be\la{swk}
1 - \frac{\sqrt{2k^3-k^2-2k+1}-3k+3}{(k-2)^2} < \alpha < 1 - \frac{1}{k}.
\ee

\begin{lemma}\la{3l6}
Assume that \eqref{th11} holds.
Then there exists a positive constant $C$ depending only on
$T$, $\underline{\n_0}$, $\overline{\n_0}$, $\| \n_0 - \tilde{\n} \|_{H^3}$, $\| \mathbf{u}_0 \|_{H^2}$, $\ga$, $\tilde{\n}$, and $\alpha$ such that
\be\la{306}\ba
\sup_{0 \le t \le T} \| \n^{-1} \|_{L^\infty(0,\infty)} \le C.
\ea\ee
\end{lemma}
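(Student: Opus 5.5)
We split $(0,\infty)$ into $(1,\infty)$ and $(0,1)$. On $(1,\infty)$ we work in Eulerian coordinates with the BD entropy bound from Lemma \ref{3l1}. On $(0,1)$ we pass to the Lagrangian mass coordinate (\ref{lzb1}) with $N=3$, as in the proof of Lemma \ref{2l6}. Throughout we fix $k>k_2>3$ satisfying (\ref{swk}). By Lemma \ref{3l5}, $R_T\le C$, so Lemmas \ref{3l3} and \ref{3l4} give bounds independent of $R_T$. In particular
\begin{equation*}
\sup_{0\le t\le T}\int_0^\infty \n^{k(\alpha-2)+1}|\p_r\n|^k r^2dr\le C .
\end{equation*}

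\textbf{Far field.} By (\ref{ywsi1}), for $r\ge1$ we have
\begin{equation*}
|\n^{\alpha-1}(r)-\tilde{\n}^{\alpha-1}|\le \int_1^\infty|\p_r\n^{\alpha-1}|dr .
\end{equation*}
Since $\p_r\n^{\alpha-1}=\frac{\alpha-1}{\alpha-\frac12}\n^{-\frac12}\p_r\n^{\alpha-\frac12}$, Cauchy's inequality gives
\begin{equation*}
\int_1^\infty|\p_r\n^{\alpha-1}|dr\le C\|\n^{-1}\|_{L^\infty(1,\infty)}^{1/2}\Big(\int_1^\infty|\p_r\n^{\alpha-\frac12}|^2r^2dr\Big)^{1/2}\Big(\int_1^\infty r^{-2}dr\Big)^{1/2}.
\end{equation*}
With (\ref{301}) this yields
\begin{equation*}
\|\n^{-1}\|_{L^\infty(1,\infty)}^{1-\alpha}\le C+C\|\n^{-1}\|^{1/2}_{L^\infty(1,\infty)}.
\end{equation*}
This closes by Young only if $1-\alpha>\frac12$, which fails. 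So I would split instead on $\{\n<\tilde{\n}/2\}$ and $\{\n\ge\tilde{\n}/2\}$, as in the proof of Lemma \ref{2l7}.
\begin{itemize}
\item On $\{\n\ge\tilde{\n}/2\}$, $|\p_r(\n^{-1}-\tilde{\n}^{-1})^2|$ is bounded by $C|\p_r\n^{\alpha-\frac12}|$. Cauchy with weights, $|\p_r\n^{\alpha-\frac12}|^2r^2+r^{-2}$, gives a finite contribution; this is the ``Cauchy inequality'' step advertised in the introduction.
\item On $\{\n<\tilde{\n}/2\}$, write $\n^{-2}|\n^{-1}-\tilde{\n}^{-1}||\p_r\n|\le C\|\n^{-1}\|_{L^\infty(1,\infty)}^{1+\alpha+\frac1k}\,\n^{\alpha-2+\frac1k}|\p_r\n|$. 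Then apply H\"older with exponents $k$ and $\frac{k}{k-1}$ against $\n^{k(\alpha-2)+1}|\p_r\n|^kr^2$. The remaining factor is $\big(\int_1^\infty\chi_{(\n<\tilde{\n}/2)}r^{-2/(k-1)}dr\big)^{(k-1)/k}$, which is bounded by the measure of $\{\n<\tilde{\n}/2\}$ in $r^2dr$. That measure is controlled by (\ref{316}) via $|\sqrt{\n}-\sqrt{\tilde{\n}}|^2\ge c$ on this set.
\end{itemize}
Since $\alpha<1-\frac1k$, the exponent $1+\alpha+\frac1k$ is less than $2$. Young's inequality then absorbs the term and gives $\|\n^{-1}\|_{L^\infty(1,\infty)}\le C$.

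\textbf{Near the origin.} Using (\ref{301}) and Lemma \ref{3l5}, we have
\begin{equation*}
\int_0^r\n s^2ds\ge -C_0^{1/2}\Big(\tfrac{r^3}{3}\Big)^{1/2}+\tfrac{\tilde{\n}}{3}r^3
\end{equation*}
(using $L^2$ control of $\n-\tilde{\n}$ on $\{\n\le C\}$, which follows from (\ref{314}) and the upper bound). This lets us choose $r_0$ with $y_0=\int_0^{r_0}\n s^2ds\in[1,C]$. Applying (\ref{ywsi2}) in $y$, for $r\in(0,1)$,
\begin{equation*}
\n^{-1}\le \frac{1}{y_0}\int_0^{y_0}\n^{-1}dy+\int_0^{r_0}\n^{-2}|\p_r\n|dr\le C+C\|\n^{-1}\|^{\alpha+\frac1k}_{L^\infty(0,r_0)}\Big(\int_0^{r_0}r^{-\frac{2}{k-1}}dr\Big)^{\frac{k-1}{k}} ,
\end{equation*}
where the first term uses $\int_0^{y_0}\n^{-1}dy=r_0^3/3$ and the second follows by H\"older as before. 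The $r$-integral is finite because $k>3$. Combining with the far-field bound on $(1,r_0)$ and using $\alpha+\frac1k<1$, Young's inequality gives the bound on $(0,1)$ and hence (\ref{306}).

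The main obstacle is the far-field step: we must check carefully that the H\"older exponents close with $\alpha<1-\frac1k$ and that the measure of $\{\n<\tilde{\n}/2\}$ is finite. This is exactly where (\ref{swk}) and $k>k_2$ enter.
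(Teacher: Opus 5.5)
Your proposal is correct and follows the paper's proof essentially step for step. On $(1,\infty)$ it uses the same splitting into $\{\rho<\tilde{\rho}/2\}$ and $\{\rho\ge\tilde{\rho}/2\}$, H\"older against $\rho^{k(\alpha-2)+1}|\partial_r\rho|^k r^2$, and absorption via $1+\alpha+\frac{1}{k}<2$; near the origin it uses the same Lagrangian-coordinate argument, with $k>3$ and absorption via $\alpha+\frac{1}{k}<1$. The differences are cosmetic: you bound the measure of $\{\rho<\tilde{\rho}/2\}$ by \eqref{316} instead of the $L^2$ bound \eqref{361}, and you factor out $\|\rho^{-1}\|$ instead of $\|\rho^{-1}-\tilde{\rho}^{-1}\|$. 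Your remark that the direct $\partial_r\rho^{\alpha-1}$ estimate sketched in the introduction does not close on its own is accurate, and the paper's proof likewise uses the BD bound only on $\{\rho\ge\tilde{\rho}/2\}$.
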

\begin{proof}
First, it follows from (\ref{301}) and (\ref{305}) that there exists a positive constant $\hat{C}_0$ depending only on $\n_0$, $\mathbf{u}_0$, $\ga$, $\tilde{\n}$, $\alpha$, and $T$ such that
\be\la{361}\ba
\sup_{0 \le t \le T} \int_0^\infty |\n - \tilde{\n}|^2 r^2 dr \le \hat{C}_0.
\ea\ee
For $k$ satisfying (\ref{swk}), by (\ref{ywsi1}), (\ref{301}), (\ref{304}), (\ref{305}), and H\"older's inequality, we derive
\be\la{362}\ba
& \| (\n^{-1} - \tilde{\n}^{-1})^2 \|_{L^\infty(1,\infty)} \\
& \le \int_1^\infty |\p_r (\n^{-1} - \tilde{\n}^{-1})^2| dr \\
& \le C \int_1^\infty \n^{-2} |\n^{-1} - \tilde{\n}^{-1}| |\p_r \n| dr \\
& \le C \int_1^\infty \chi_{(0 < \n < \frac{\tilde{\n}}{2})} \n^{-2} |\n^{-1} - \tilde{\n}^{-1}| |\p_r \n| dr
+ C \int_1^\infty \chi_{(\n \ge \frac{\tilde{\n}}{2})} \n^{-2} |\n^{-1} - \tilde{\n}^{-1}| |\p_r \n| dr \\
& \le C \int_1^\infty \chi_{(0 < \n < \frac{\tilde{\n}}{2})} (\n^{-1} - \tilde{\n}^{-1})^3 |\p_r \n| dr
+ C \int_1^\infty \chi_{(\n \ge \frac{\tilde{\n}}{2})} |\p_r \n^{\alpha-\frac{1}{2}}| dr \\
& \le C \| \n^{-1} - \tilde{\n}^{-1} \|_{L^\infty(1,\infty)}^{1+\alpha+\frac{1}{k}}
\int_1^\infty \chi_{(0 < \n < \frac{\tilde{\n}}{2})} (\n^{-1} - \tilde{\n}^{-1})^{2-\alpha-\frac{1}{k}} |\p_r \n| dr \\
& \quad + C \int_1^\infty \left( |\p_r \n^{\alpha-\frac{1}{2}}|^2 r^2 + r^{-2} \right) dr \\
& \le C \| \n^{-1} - \tilde{\n}^{-1} \|_{L^\infty(1,\infty)}^{1+\alpha+\frac{1}{k}}
\int_1^\infty \chi_{(0 < \n < \frac{\tilde{\n}}{2})} \n^{\alpha-2+\frac{1}{k}} |\p_r \n| dr
+ C \\
& \le C \| \n^{-1} - \tilde{\n}^{-1} \|_{L^\infty(1,\infty)}^{1+\alpha+\frac{1}{k}}
\left( \int_1^\infty \n^{k(\alpha-2)+1} |\p_r\n|^{k} dr \right)^{\frac{1}{k}}
\left( \int_1^\infty \chi_{(0 < \n < \frac{\tilde{\n}}{2})} dr \right)^{\frac{k-1}{k}}
+ C \\
& \le C \| \n^{-1} - \tilde{\n}^{-1} \|_{L^\infty(1,\infty)}^{1+\alpha+\frac{1}{k}}
+ C,
\ea\ee
where we have used the following estimate
\be\la{363}\ba
\int_1^\infty \chi_{(0 < \n < \frac{\tilde{\n}}{2})} dr
= \int_1^\infty \chi_{(\frac{\tilde{\n}}{2} < \tilde{\n} - \n < \tilde{\n})} dr
\le \left( \frac{\tilde{\n}}{2} \right)^{-2} \int_1^\infty |\tilde{\n} - \n|^2 r^2 dr
\le C,
\ea\ee
due to (\ref{361}).

Combining (\ref{362}) with Young's inequality and using the fact that $\alpha < 1 - \frac{1}{k}$, we arrive at
\be\la{364}\ba
\| \n^{-1} - \tilde{\n}^{-1} \|_{L^\infty(1,\infty)}^2
& = \| (\n^{-1} - \tilde{\n}^{-1})^2 \|_{L^\infty(1,\infty)} \\
& \le C \| \n^{-1} - \tilde{\n}^{-1} \|_{L^\infty(1,\infty)}^{1+\alpha+\frac{1}{k}}
+ C \\
& \le \frac{1}{2} \| \n^{-1} - \tilde{\n}^{-1} \|_{L^\infty(1,\infty)}^2 + C,
\ea\ee
which yields
\be\la{365}\ba
\| \n^{-1} - \tilde{\n}^{-1} \|_{L^\infty(1,\infty)}^2 \le C.
\ea\ee
This implies that
\be\la{3613}\ba
\| \n^{-1} \|_{L^\infty(1,\infty)}
& \le \| \n^{-1} - \tilde{\n}^{-1} \|_{L^\infty(1,\infty)} + \tilde{\n}^{-1}
\le C.
\ea\ee

It remains to estimate $\| \n^{-1} \|_{L^\infty(0,1)}$.
From (\ref{361}) and H\"older's inequality, we conclude that, for any $0 < r < \infty$,
\be\la{3615}\ba
\int_0^r \n s^2 ds
& = \int_0^r (\n - \tilde{\n}) s^2 ds + \tilde{\n} \int_0^r s^2 ds \\
& \ge - \left( \int_0^r |\n - \tilde{\n}|^2 s^2 ds \right)^{\frac{1}{2}} \left( \int_0^r s^2 ds \right)^{\frac{1}{2}}
+ \frac{1}{3} \tilde{\n} r^3 \\
& \ge - \hat{C}_0^{\frac{1}{2}} r^{\frac{3}{2}} + \frac{1}{3} \tilde{\n} r^3.
\ea\ee
We set
\be\la{3616}
\hat{r}_0 \triangleq 1 + \frac{3}{\tilde{\n}} (1 + \hat{C}_0^{\frac{1}{2}}),
\ee
and
\be\la{3617}\ba
\hat{y}_0 \triangleq \int_0^{\hat{r}_0} \n s^2 ds.
\ea\ee
Using (\ref{305}), (\ref{3615}), and (\ref{3616}), we arrive at
\be\la{3618}
1 \le \hat{y}_0 \le C.
\ee
For any $r\in(0,1)$ with $y$ defined by (\ref{lzb1}), we deduce from (\ref{ywsi2}), (\ref{lzb1}), (\ref{lzb2}), (\ref{304}), and (\ref{3613}) that
\be\la{3619}\ba
\n^{-1}(r,t) = \n^{-1}(y,t)
& \le \frac{1}{\hat{y}_0} \int_0^{\hat{y}_0} \n^{-1} dy
+ \int_0^{\hat{y}_0} |\p_y(\n^{-1})| dy \\
& \le \int_0^{\hat{r}_0} \n^{-1} \n r^2 dr + \int_0^{\hat{r}_0} |\p_r(\n^{-1})| dr \\
& \le \int_0^{\hat{r}_0} r^2 dr + C \int_0^{\hat{r}_0} \n^{-2} |\p_r \n| dr \\
& \le C + C \| \n^{-1} \|^{\alpha+\frac{1}{k}}_{L^\infty(0,\hat{r}_0)}
\left( \int_0^\infty \n^{k(\alpha-2)+1} |\p_r\n|^{k} r^2 dr \right)^{\frac{1}{k}}
\left( \int_0^{\hat{r}_0} r^{- \frac{2}{k-1}} dr \right)^{\frac{k-1}{k}} \\
& \le C + C \| \n^{-1} \|^{\alpha+\frac{1}{k}}_{L^\infty(0,\hat{r}_0)} \\
& \le C + C \| \n^{-1} \|^{\alpha+\frac{1}{k}}_{L^\infty(0,1)},
\ea\ee
due to $k>3$.

This, along with Young's inequality and the fact that $\alpha<1-\frac{1}{k}$, implies that
\be\la{3620}\ba
\sup_{0 \le t \le T} \| \n^{-1}(r,t) \|_{L^\infty(0,1)} \le C.
\ea\ee
Combining (\ref{3613}) and (\ref{3620}) yields (\ref{306}) and completes the proof of Lemma \ref{3l6}.
\end{proof}

\section{Higher Order Estimates}

In this subsection, we derive the higher-order estimates for $\OM=\mathbb{R}^N$ $(N=2 \text{ or } 3)$, which ensure that the classical solution can be extended globally in time.

\begin{lemma}\la{hl1}
There exists a positive constant $C$ depending only on $T$, $\alpha$, $\tilde{\n}$, $\ga$, $\underline{\n_0}$, $\| \n_0 - \tilde{\n} \|_{H^3}$, and $\| \mathbf{u}_0 \|_{H^2}$ such that
\be\la{h01}\ba
\sup_{0 \le t \le T} \left( \| \na \mathbf{u} \|^2_{L^2} + \| \n_t \|^2_{L^2} \right)
+ \int_0^T \left( \| \mathbf{u}_t \|_{L^2}^2 + \| \na^2 \mathbf{u} \|^2_{L^2} \right) dt \le C.
\ea\ee
\end{lemma}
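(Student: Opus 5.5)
With the uniform bounds $C^{-1}\le \n\le C$ from Lemmas \ref{2l5}, \ref{2l6}, \ref{2l7}, \ref{3l5}, \ref{3l6} in hand, system (\ref{ns}) becomes a uniformly parabolic momentum equation with bounded, strictly positive viscosities. I will follow the standard $H^1$-energy method (as in \cite{HLX2,HMZ}): multiply the momentum equation by $\mathbf{u}_t$ and combine the result with elliptic estimates for the Lamé-type operator. The spherical symmetry will be used to control $\na\n$ in the needed $L^p$ norm. The key inputs on the density are the BD bound $\sup_t\|\na \n\|_{L^2}\le C$, which follows from (\ref{2101})/(\ref{301}) and the two-sided density bounds, and the higher integrability bound $\sup_t \|\,|\na\n|\,\|_{L^k}\le C$ for some $k\ge3$ (indeed $k>3$ when $N=3$). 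The latter comes from (\ref{204})/(\ref{304}) together with the bounds on $\n$, $\n^{-1}$, since $\int |\na\n|^k dx = c_N\int |\p_r\n|^k r^{N-1}dr$.

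\textbf{Step 1: $\mathbf{u}_t$ testing.} Write the momentum equation as
\[
\n \mathbf{u}_t + \n \mathbf{u}\cdot\na\mathbf{u} - \div(\n^\alpha \mathbb{D}\mathbf{u}) - \na((\alpha-1)\n^\alpha\div\mathbf{u}) + \na P = 0,
\]
multiply by $\mathbf{u}_t$, and integrate. This gives
\[
\frac{d}{dt}\int \Big(\tfrac12\n^\alpha|\mathbb{D}\mathbf{u}|^2+\tfrac{\alpha-1}{2}\n^\alpha(\div\mathbf{u})^2 - (P-P(\tilde\n))\div\mathbf{u}\Big)dx + \int\n|\mathbf{u}_t|^2dx
\]
on the left. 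The right side consists of terms of the form $\int (\n^\alpha)_t|\na\mathbf{u}|^2$, $\int P_t\div\mathbf{u}$ and $\int \n|\mathbf{u}||\na\mathbf{u}||\mathbf{u}_t|$. Since $\mu+N\lambda=(1+N(\alpha-1))\n^\alpha>0$ for the present $\alpha$ in both dimensions, the quadratic form is coercive, so the energy controls $\|\na\mathbf{u}\|_{L^2}^2$ up to the pressure term. The pressure term is handled by Young's inequality with $\|\n-\tilde\n\|_{L^2}\le C$. For the time derivatives, use $\n_t=-\mathbf{u}\cdot\na\n-\n\div\mathbf{u}$. This gives $\|\n_t\|_{L^2}\le C\|\mathbf{u}\|_{L^{p}}\|\na\n\|_{L^{k}}+C\|\na\mathbf{u}\|_{L^2}$ with $\frac1p+\frac1k=\frac12$, and the same for $P_t$ and $(\n^\alpha)_t$.

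\textbf{Step 2: elliptic estimate.} Expanding the viscous term gives
\[
\n^\alpha\big(\tfrac12\Delta\mathbf{u}+(\alpha-\tfrac12)\na\div\mathbf{u}\big) = \n\dot{\mathbf{u}}+\na P - \na\n^\alpha\cdot(\text{lower order in }\na\mathbf{u}).
\]
Standard $L^2$ Lamé estimates then yield
\[
\|\na^2\mathbf{u}\|_{L^2}\le C\big(\|\sqrt\n\mathbf{u}_t\|_{L^2}+\|\mathbf{u}\|_{L^\infty}\|\na\mathbf{u}\|_{L^2}+\|\na\n\|_{L^2}+\|\na\n\|_{L^k}\|\na\mathbf{u}\|_{L^{2k/(k-2)}}\big).
\]
In the last term, Gagliardo--Nirenberg (Lemma \ref{gn1}) interpolates $\|\na\mathbf{u}\|_{L^{2k/(k-2)}}$ between $\|\na\mathbf{u}\|_{L^2}$ and $\|\na^2\mathbf{u}\|_{L^2}$ with exponent $N/k<1$, so this term is absorbed by Young's inequality. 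Similarly, $\|\mathbf{u}\|_{L^\infty}$ and $\|\mathbf{u}\|_{L^p}$ are bounded via $\|\mathbf{u}\|_{L^2}\le C$ (from the energy bound and $\n\ge C^{-1}$), $\|\na\mathbf{u}\|_{L^2}$ and a small power of $\|\na^2\mathbf{u}\|_{L^2}$.

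\textbf{Step 3: closing.} Substituting Step 2 into Step 1, the cubic term $\int\n|\mathbf{u}||\na\mathbf{u}||\mathbf{u}_t|$ and the $(\n^\alpha)_t|\na\mathbf{u}|^2$ term are bounded by $\frac14\|\sqrt\n\mathbf{u}_t\|_{L^2}^2 + C(1+\|\na\mathbf{u}\|_{L^2}^2)\|\na\mathbf{u}\|_{L^2}^2+C$. Gronwall's inequality then closes, because $\int_0^T\|\na\mathbf{u}\|_{L^2}^2dt\le C$ by (\ref{2101})/(\ref{301}). This yields the bounds on $\sup_t\|\na\mathbf{u}\|_{L^2}$ and on $\int_0^T\|\mathbf{u}_t\|_{L^2}^2dt$. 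Step 2 then gives $\int_0^T\|\na^2\mathbf{u}\|_{L^2}^2dt$, and Step 1's formula gives $\sup_t\|\n_t\|_{L^2}$.

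\textbf{Main obstacle.} I expect the main obstacle to be the term $\int|\na\n^\alpha||\na\mathbf{u}|^2|\mathbf{u}|$ (and its elliptic counterpart): $\na\n$ is only in $L^k$, not $L^\infty$. The plan hinges on $k>N$ so that the GN interpolation exponent $N/k$ stays strictly below $1$. For $N=3$ this needs $k>3$, which (\ref{swk}) provides. For $N=2$ any $k\ge3$ from Lemma \ref{2l4} suffices.
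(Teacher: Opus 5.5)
Your Steps 1--3 do close for $N=2$, where every bad term is at most quartic in $\|\nabla\mathbf{u}\|_{L^2}$. For $N=3$, however, the bound you claim in Step 3 is false, and the lemma is needed in 3D. The obstacle is not the $|\nabla\rho^\alpha||\mathbf{u}||\nabla\mathbf{u}|^2$ term but two others.

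\emph{The $(\rho^\alpha)_t$ term.} Testing the \emph{undivided} equation with $\mathbf{u}_t$ produces $\int(\rho^\alpha)_t|\nabla\mathbf{u}|^2dx$. This contains $-\alpha\int\rho^\alpha\,\mathrm{div}\,\mathbf{u}\,|\nabla\mathbf{u}|^2dx$, which is of size
\[
\|\nabla\mathbf{u}\|_{L^3}^3\le C\|\nabla\mathbf{u}\|_{L^2}^{3/2}\|\nabla^2\mathbf{u}\|_{L^2}^{3/2}.
\]
After your elliptic bound and Young's inequality this gives $\varepsilon\|\mathbf{u}_t\|_{L^2}^2+C\|\nabla\mathbf{u}\|_{L^2}^6$, not $C(1+\|\nabla\mathbf{u}\|_{L^2}^2)\|\nabla\mathbf{u}\|_{L^2}^2$.

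\emph{The convective term.} You control $\mathbf{u}$ only through $\|\mathbf{u}\|_{L^2}$, $\|\nabla\mathbf{u}\|_{L^2}$ and a power of $\|\nabla^2\mathbf{u}\|_{L^2}$. In 3D that power is at least $\frac12$, since $\|\mathbf{u}\|_{L^\infty}\le C\|\nabla\mathbf{u}\|_{L^2}^{1/2}\|\nabla^2\mathbf{u}\|_{L^2}^{1/2}$ is optimal. The same holds for $\|\mathbf{u}\|_{L^p}\|\nabla\mathbf{u}\|_{L^{2p/(p-2)}}$ with $p\le6$. So this term also yields at least $\|\nabla\mathbf{u}\|_{L^2}^6$.

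You therefore end up with a Riccati-type inequality $E'\le C\|\nabla\mathbf{u}\|_{L^2}^2E^2+\cdots$. The bound $\int_0^T\|\nabla\mathbf{u}\|_{L^2}^2dt\le C$ does not rule out blow-up for large data. This cubic obstruction is exactly why \cite{HLX2} needs small energy.

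The paper removes both problems with two ingredients.

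\emph{Divide by $\rho^\alpha$ first.} Using the lower bound on $\rho$, the paper rewrites the momentum equation as \eqref{h14}, whose principal part is the constant-coefficient operator $\Delta\mathbf{u}+(\alpha-1)\nabla\mathrm{div}\,\mathbf{u}$. Testing this with $\mathbf{u}_t$ gives
\[
\frac{d}{dt}\int\big(|\nabla\mathbf{u}|^2+(\alpha-1)(\mathrm{div}\,\mathbf{u})^2\big)dx+2\int\rho^{1-\alpha}|\mathbf{u}_t|^2dx .
\]
The left side is coercive by \eqref{h112}, and no time derivative of the viscosity appears. The right side is \emph{linear} in $\mathbf{u}_t$, with coefficients $|\mathbf{u}||\nabla\mathbf{u}|$, $|\nabla\rho|$ and $|\nabla\rho||\nabla\mathbf{u}|$.

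\emph{Use a time-uniform $L^q$ bound on $\mathbf{u}$.} The paper uses $\sup_t\|\mathbf{u}\|_{L^q}\le C$ for some $q\in(3,6]$; this input is missing from your plan. It comes from $\sup_t\int\rho|u|^kr^{2}dr$ in Lemma \ref{3l3} with some $k>3$, together with $R_T\le C$ (Lemma \ref{3l5}) and $\rho\ge C^{-1}$. Combine it with $\|\nabla\rho\|_{L^q}\le C$, the interpolation $\|\nabla\mathbf{u}\|_{L^{2q/(q-2)}}\le C\|\nabla\mathbf{u}\|_{L^2}^{1-N/q}\|\nabla^2\mathbf{u}\|_{L^2}^{N/q}$, and the elliptic bound \eqref{h17}. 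Every right-hand term is then at most $\int\rho^{1-\alpha}|\mathbf{u}_t|^2dx+C+C\|\nabla\mathbf{u}\|_{L^2}^2$, which integrates in time directly with no nonlinear Gr\"onwall step.

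Your coercivity argument, the $\rho_t$ estimate and the elliptic step can stay. Step 1 has to be replaced by this normalized testing, and the $L^q$ bound on $\mathbf{u}$ has to be fed in.
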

\begin{proof}
First, it follows from (\ref{2101}), (\ref{203}), (\ref{204}), (\ref{205}), (\ref{206}), (\ref{301}), (\ref{303}), (\ref{304}), (\ref{305}), and (\ref{306}) that, for some $q \in (3,6]$,
\be\la{h11}\ba
\sup_{0 \le t \le T} \left( \| \mathbf{u} \|_{L^2} + \| \mathbf{u} \|_{L^q} + \| \n \|_{L^\infty} + \| \n^{-1} \|_{L^\infty} + \| \na \n \|_{L^2} + \| \na \n \|_{L^q} \right)
\le C.
\ea\ee
Since
\be\la{h12}\ba
|\na \mathbf{u}|^2 = |\p_r u|^2 + (N-1) \frac{1}{r^2} u^2,
\ea\ee
we obtain from (\ref{2101}) and (\ref{301}) that
\be\la{h13}\ba
\int_0^T \|\na \mathbf{u}\|_{L^2}^2 dt \le C.
\ea\ee
Using the fact that $\n>0$ and in the radially symmetric setting $\mathbb{D} \mathbf{u} = \na \mathbf{u}$, we deduce from $(\ref{ns})_2$ that $\mathbf{u}$ satisfies the following system
\be\la{h14}\ba
\begin{cases}
\Delta \mathbf{u} + (\alpha - 1) \na \div \mathbf{u}
= \n^{-\alpha} \left( \n \mathbf{u}_t + \n \mathbf{u} \cdot \na \mathbf{u} + \na P - \na \mu(\n) \cdot \na \mathbf{u} - \na \lambda(\n) \div \mathbf{u} \right), \\
\mathbf{u} \to 0 \ \text{ as } \ |x| \to \infty.
\end{cases}
\ea\ee
The standard $L^p$ estimate of elliptic equations (see \cite{NS}) implies that for $k \in \mathbb{N}$ and $p \in (1,\infty)$,
\be\la{h15}\ba
\| \na^2 \mathbf{u} \|_{W^{k,p}} \le C \| \n^{-\alpha} \left( \n \mathbf{u}_t + \n \mathbf{u} \cdot \na \mathbf{u} + \na P - \na \mu(\n) \cdot \na \mathbf{u} - \na \lambda(\n) \div \mathbf{u} \right) \|_{W^{k,p}}.
\ea\ee
In particular, by (\ref{gn11}), (\ref{h11}), and Young's inequality, we derive
\be\la{h16}\ba
\| \na^2 \mathbf{u} \|_{L^2}
& \le C \| \n^{-\alpha} \left( \n \mathbf{u}_t + \n \mathbf{u} \cdot \na \mathbf{u} + \na P - \na \mu(\n) \cdot \na \mathbf{u} - \na \lambda(\n) \div \mathbf{u} \right) \|_{L^2} \\
& \le C \left( \| \mathbf{u}_t \|_{L^2} + \| |\mathbf{u}| |\na \mathbf{u}| \|_{L^2} + \| \na \n \|_{L^2} + \| |\na \n| |\na \mathbf{u}| \|_{L^2} \right) \\
& \le C \left( 1 + \| \mathbf{u}_t \|_{L^2} + \| \mathbf{u} \|_{L^q} \| \na \mathbf{u} \|_{L^\frac{2q}{q-2}} + \| \na \n \|_{L^q} \| \na \mathbf{u} \|_{L^\frac{2q}{q-2}} \right) \\
& \le C \left( 1 + \| \mathbf{u}_t \|_{L^2}
+ \| \na \mathbf{u} \|_{L^2}^{\frac{q-N}{q}} \| \na^2 \mathbf{u} \|_{L^2}^{\frac{N}{q}} \right) \\
& \le \frac{1}{2} \| \na^2 \mathbf{u} \|_{L^2}
+ C \left( 1 + \| \mathbf{u}_t \|_{L^2} + \| \na \mathbf{u} \|_{L^2} \right),
\ea\ee
which yields
\be\la{h17}\ba
\| \na^2 \mathbf{u} \|_{L^2}
\le C \left( 1 + \| \mathbf{u}_t \|_{L^2} + \| \na \mathbf{u} \|_{L^2} \right).
\ea\ee
Multiplying $(\ref{h14})_1$ by $2 \mathbf{u}$, integrating by parts over $\mathbb{R}^N$, and using (\ref{h11}), (\ref{h16}), (\ref{gn11}), and Young's inequality, we arrive at
\be\la{h18}\ba
& \frac{d}{dt} \left( \| \na \mathbf{u} \|_{L^2}^2 + (\alpha-1) (\div \mathbf{u})^2 \right)
+ 2 \int \n^{1-\alpha} |\mathbf{u}_t|^2 dx \\
& \le C \int \left( |\mathbf{u}| |\na \mathbf{u}| + |\na \n| + |\na \n| |\na \mathbf{u}| \right) |\mathbf{u}_t| dx \\
& \le C \left( \| \mathbf{u} \|_{L^q} \| \na \mathbf{u} \|_{L^\frac{2q}{q-2}} + \| \na \n \|_{L^2}
+ \| \na \n \|_{L^q} \| \na \mathbf{u} \|_{L^\frac{2q}{q-2}} \right) \| \mathbf{u}_t \|_{L^2} \\
& \le C \left( 1 + \| \na \mathbf{u} \|_{L^2}^{\frac{q-N}{q}} \| \na^2 \mathbf{u} \|_{L^2}^{\frac{N}{q}} \right) \| \mathbf{u}_t \|_{L^2} \\
& \le C \left( 1 + \| \na \mathbf{u} \|_{L^2} + \| \na \mathbf{u} \|_{L^2}^{\frac{q-N}{q}} \| \mathbf{u}_t \|_{L^2}^{\frac{N}{q}} \right) \| \mathbf{u}_t \|_{L^2} \\
& \le \int \n^{1-\alpha} |\mathbf{u}_t|^2 dx + C
+ C \| \na \mathbf{u} \|_{L^2}^2,
\ea\ee
which gives
\be\la{h19}\ba
\frac{d}{dt} \left( \| \na \mathbf{u} \|_{L^2}^2 + (\alpha-1) (\div \mathbf{u})^2 \right)
+ \int \n^{1-\alpha} |\mathbf{u}_t|^2 dx
\le C + C \| \na \mathbf{u} \|_{L^2}^2.
\ea\ee
Integrating (\ref{h19}) over $(0,T)$ and using (\ref{h11}) and (\ref{h13}) lead to
\be\la{h110}\ba
\sup_{0 \le t \le T} \left( \| \na \mathbf{u} \|_{L^2}^2 + (\alpha-1) \| \div \mathbf{u} \|_{L^2}^2 \right)
+ \int_0^T\| \mathbf{u}_t \|_{L^2}^2 dt \le C.
\ea\ee
Note that for any vector-valued function $\mathbf{v}$, we have
\be\la{h111}\ba
(\div \mathbf{v})^2 \le N |\na \mathbf{v}|^2,
\ea\ee
which together with the fact that $\frac{N-1}{N} < \alpha <1$ implies
\be\la{h112}\ba
|\na \mathbf{v}|^2 + (\alpha-1) (\div \mathbf{v})^2
\ge (\alpha N - N + 1) |\na \mathbf{v}|^2 \triangleq N_{\alpha} |\na \mathbf{v}|^2,
\ea\ee
where $N_{\alpha} = \alpha N - N + 1 > 0$.
Hence, it holds that
\be\la{h1120}\ba
|\na \mathbf{u}|^2 + (\alpha-1) (\div \mathbf{u})^2
\ge N_{\alpha} |\na \mathbf{u}|^2.
\ea\ee
It follows from (\ref{h110}), (\ref{h1120}), and (\ref{h17}) that
\be\la{h113}\ba
\sup_{0 \le t \le T} \| \na \mathbf{u} \|_{L^2}^2
+ \int_0^T \left( \| \mathbf{u}_t \|_{L^2}^2 + \| \na^2 \mathbf{u} \|^2_{L^2} \right) dt \le C.
\ea\ee
Moreover, using $(\ref{ns})_1$, (\ref{h11}), (\ref{h113}), and H\"older's inequality, we obtain
\be\la{h114}\ba
\| \n_t \|_{L^2}
& \le C \left( \| \n \div \mathbf{u} \|_{L^2} + \| \mathbf{u} \cdot \na \n \|_{L^2} \right) \\
& \le C \left( \| \na \mathbf{u} \|_{L^2} + \| \mathbf{u} \|_{L^{\frac{2q}{q-2}}} \| \na \n \|_{L^q} \right) \\
& \le C.
\ea\ee
Combining (\ref{h113}) with (\ref{h114}) yields (\ref{h01}) and completes the proof of Lemma \ref{hl1}.
\end{proof}

\begin{lemma}\la{hl2}
There exists a positive constant $C$ depending only on $T$, $\alpha$, $\tilde{\n}$, $\ga$, $\underline{\n_0}$, $\| \n_0 - \tilde{\n} \|_{H^3}$, and $\| \mathbf{u}_0 \|_{H^2}$ such that
\be\la{h02}\ba
\sup_{0 \le t \le T} \left( \| \mathbf{u}_t \|_{L^2}^2 + \| \na^2 \mathbf{u} \|^2_{L^2} \right)
+ \int_0^T \| \na \mathbf{u}_t \|_{L^2}^2 dt \le C.
\ea\ee
\end{lemma}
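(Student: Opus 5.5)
The plan is the standard Hoff-type estimate obtained by differentiating the momentum equation in time. I will work with the form $(\ref{h14})_1$ rewritten as
\[
\n^{1-\alpha}\mathbf{u}_t - \Delta \mathbf{u} - (\alpha-1)\na\div\mathbf{u}
= -\n^{1-\alpha}\mathbf{u}\cdot\na\mathbf{u} - \n^{-\alpha}\na P + \n^{-\alpha}\na\mu(\n)\cdot\na\mathbf{u} + \n^{-\alpha}\na\lambda(\n)\div\mathbf{u}.
\]
Differentiate this identity in $t$, multiply by $\mathbf{u}_t$, and integrate over $\mathbb{R}^N$. By (\ref{h112}), the viscous part gives $\frac{1}{2}\frac{d}{dt}\int\n^{1-\alpha}|\mathbf{u}_t|^2dx + N_\alpha\|\na\mathbf{u}_t\|_{L^2}^2$, up to the term $\frac12\int(\n^{1-\alpha})_t|\mathbf{u}_t|^2dx$.

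The remaining terms all involve either $\n_t$, $\na\n_t$, or $\mathbf{u}_t\cdot\na\mathbf{u}$, $\mathbf{u}\cdot\na\mathbf{u}_t$, each multiplied by bounded density factors; the density factors are bounded by (\ref{h11}). The $\na\n_t$ terms, which come from $\na P$, $\na\mu$ and $\na\lambda$, must not be kept with a derivative on $\n_t$. I will integrate them by parts so the derivative lands on $\mathbf{u}_t$ or on $\na\mathbf{u}$. For example, $\int \p_t(\n^{-\alpha}\na P)\cdot\mathbf{u}_t\,dx$ becomes a sum of terms like $\int \n_t(\ldots)\div\mathbf{u}_t$ and $\int\n_t\na\n\cdot\mathbf{u}_t$. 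The term $\int\p_t(\n^{-\alpha}\na\mu)\cdot\na\mathbf{u}\,\mathbf{u}_t$ produces $\int \n_t\,\na\mathbf{u}\,\na\mathbf{u}_t + \int \n_t\na^2\mathbf{u}\,\mathbf{u}_t + \int\n_t\na\n\na\mathbf{u}\,\mathbf{u}_t$. The time derivative of $\n$ is handled via $\n_t=-\mathbf{u}\cdot\na\n-\n\div\mathbf{u}$. Since $\mathbf{u}\in L^q$ and $\na\n\in L^q$ with $q>N$, this gives $\|\n_t\|_{L^q}\le C(1+\|\na\mathbf{u}\|_{L^q})$, and $\|\n_t\|_{L^2}\le C$ by (\ref{h01}).

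Each term is then bounded by H\"older's inequality, (\ref{gn11}), and $\|\mathbf{u}_t\|_{L^{2q/(q-2)}}\le C\|\mathbf{u}_t\|_{L^2}^{1-N/q}\|\na\mathbf{u}_t\|_{L^2}^{N/q}$, together with
\[
\|\na\mathbf{u}\|_{L^p}\le C\|\na\mathbf{u}\|_{L^2}^{1-\theta}\|\na^2\mathbf{u}\|_{L^2}^{\theta}.
\]
Using (\ref{h17}), namely $\|\na^2\mathbf{u}\|_{L^2}\le C(1+\|\mathbf{u}_t\|_{L^2})$, and Young's inequality, I aim for
\[
\frac{d}{dt}\int\n^{1-\alpha}|\mathbf{u}_t|^2dx + N_\alpha\|\na\mathbf{u}_t\|_{L^2}^2 \le C\big(1+\|\mathbf{u}_t\|_{L^2}^2+\|\na^2\mathbf{u}\|_{L^2}^2\big)\big(1+\|\mathbf{u}_t\|_{L^2}^2\big).
\]
The prefactor is integrable in time by (\ref{h01}), so Gr\"onwall's inequality closes the estimate.

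Two points remain. The initial value $\|\mathbf{u}_t(0)\|_{L^2}$ is read off from the equation at $t=0$; it is bounded by $\|\mathbf{u}_0\|_{H^2}$ and $\|\n_0-\tilde\n\|_{H^1}$ together with $\underline{\n_0}$. This is why the dependence on $\|\mathbf{u}_0\|_{H^2}$ appears; alternatively the estimate can be done at the level of the local solution, which is regular enough. Then $\sup_t\|\na^2\mathbf{u}\|_{L^2}$ follows from (\ref{h17}) and (\ref{h01}).

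I expect the main obstacle to be the cubic-type terms, such as $\int\n_t\na^2\mathbf{u}\cdot\mathbf{u}_t$ and $\int|\mathbf{u}_t|^2|\na\mathbf{u}|$. They require the interpolation exponents to be tuned so that the power of $\|\na\mathbf{u}_t\|_{L^2}$ is strictly below $2$ after Young's inequality. This works because $q>N$. In $N=3$ this relies on $q>3$, giving $\|\mathbf{u}\|_{L^\infty}\lesssim\|\mathbf{u}\|_{L^q}^{1-\theta}\|\na^2\mathbf{u}\|_{L^2}^{\theta}$-type control. I would first write $\int\n_t\na^2\mathbf{u}\cdot\mathbf{u}_t\le\|\n_t\|_{L^q}\|\na^2\mathbf{u}\|_{L^2}\|\mathbf{u}_t\|_{L^{2q/(q-2)}}$ and check that the exponents close.
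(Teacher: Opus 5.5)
Your argument is correct, but it takes a different route from the paper.

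\textbf{The paper's route.} The paper differentiates the momentum equation $(\ref{ns})_2$ itself, in the form $\n\mathbf{u}_t+\n\mathbf{u}\cdot\na\mathbf{u}-\div(\n^\alpha\na\mathbf{u})-(\alpha-1)\na(\n^\alpha\div\mathbf{u})+\na P=0$ (see (\ref{h21})), and tests with $\mathbf{u}_t$. Because everything stays in divergence form, two simplifications occur:
\begin{itemize}
\item the convective term $\int\n\mathbf{u}\cdot\na\mathbf{u}_t\cdot\mathbf{u}_t$ is absorbed, through the continuity equation, into $-\int\n_t|\mathbf{u}_t|^2$;
\item the differentiated viscous and pressure terms $\div((\n^\alpha)_t\na\mathbf{u})$, $\na((\n^\alpha)_t\div\mathbf{u})$ and $\na P_t$ need only one integration by parts and involve $\n_t$ alone. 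No $\na\n_t$ and no $\n_t\na^2\mathbf{u}$ ever appears.
\end{itemize}
The price is the variable weight $\n^\alpha$ in the dissipation, which is harmless by (\ref{h11}) and (\ref{h112}).

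\textbf{Your route.} You differentiate the normalized form (\ref{h14}), which the paper uses only later, in (\ref{h41}) for Lemma \ref{hl4}. You gain the constant-coefficient dissipation $\|\na\mathbf{u}_t\|_{L^2}^2+(\alpha-1)\|\div\mathbf{u}_t\|_{L^2}^2$. In exchange, the non-divergence terms produce $\na\n_t$, which forces the extra integration by parts. This generates the additional terms $\n_t\na^2\mathbf{u}\cdot\mathbf{u}_t$, $\n_t\na\n\,\na\mathbf{u}\,\mathbf{u}_t$ and $\n_t\na\mathbf{u}\,\na\mathbf{u}_t$, and the term $\n^{1-\alpha}\mathbf{u}\cdot\na\mathbf{u}_t\cdot\mathbf{u}_t$ no longer cancels.

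As you indicate, all of these close using $\|\n_t\|_{L^q}\le C(1+\|\mathbf{u}_t\|_{L^2})$, (\ref{h17}), and interpolation with $q>N$. After Young's inequality the powers of $\|\mathbf{u}_t\|_{L^2}$ are at most $4$, so you get a Gr\"onwall inequality whose prefactor $1+\|\mathbf{u}_t\|_{L^2}^2$ is integrable by (\ref{h01}). Both proofs therefore work; the paper's has fewer and milder terms. Your integration by parts is immediate, since $\n^{-\alpha}\na P=\frac{\ga}{\ga-\alpha}\na\n^{\ga-\alpha}$ and $\n^{-\alpha}\na\mu(\n)=\alpha\na\log\n$ are gradients.

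\textbf{Initial value.} You also bound $\|\mathbf{u}_t(0)\|_{L^2}$, which the paper leaves implicit. One small slip: controlling $\n_0^{-1}\na\mu(\n_0)\cdot\na\mathbf{u}_0$ in $L^2$ needs $\na\n_0\in L^q$ (for instance via $\|\n_0-\tilde{\n}\|_{H^2}$), not just $\|\n_0-\tilde{\n}\|_{H^1}$. This is harmless, since the constant may depend on the $H^3$ norm.
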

\begin{proof}
First, applying $\p_t$ to $(\ref{ns})_2$ yields
\be\la{h21}\ba
& \n \mathbf{u}_{tt} + \n_t \mathbf{u}_t + \n \mathbf{u} \cdot \na \mathbf{u}_t
- \div(\n^\alpha \na \mathbf{u}_t)
- (\alpha-1) \na(\n^\alpha \div \mathbf{u}_t) \\
& = - \n_t \mathbf{u} \cdot \na \mathbf{u} - \n \mathbf{u}_t \cdot \na \mathbf{u}
+ \div( (\n^\alpha)_t \na \mathbf{u}) + (\alpha-1) \na((\n^\alpha)_t \div \mathbf{u}) - \na P_t.
\ea\ee
Multiplying (\ref{h21}) by $\mathbf{u}_t$, integrating over $\mathbb{R}^N$, and using (\ref{h01}), (\ref{h11}), (\ref{h17}), (\ref{gn11}), and Young's inequality, we derive
\be\la{h22}\ba
& \frac{1}{2} \frac{d}{dt} \int \n |\mathbf{u}_t|^2 dx + \int \n^\alpha ( |\na \mathbf{u}_t|^2 + (\alpha-1) (\div \mathbf{u}_t)^2 ) dx \\
& = - \int \n_t |\mathbf{u}_t|^2 dx
- \int \left( \n_t \mathbf{u} \cdot \na \mathbf{u} \cdot \mathbf{u}_t
- \n \mathbf{u}_t \cdot \na \mathbf{u} \cdot \mathbf{u}_t \right) dx \\
& \quad - \int (\n^\alpha)_t \na \mathbf{u} \cdot \na \mathbf{u}_t dx
- (\alpha-1) \int (\n^\alpha)_t \div \mathbf{u} \div \mathbf{u}_t dx
+ \int P_t \div \mathbf{u}_t dx \\
& \le C \| \n_t \|_{L^2} \| \mathbf{u}_t \|_{L^4}^2
+ C \| \n_t \|_{L^2} \| \mathbf{u} \|_{L^6} \| \na \mathbf{u} \|_{L^6} \| \mathbf{u}_t \|_{L^6}
+ C \| \na \mathbf{u} \|_{L^2} \| \mathbf{u}_t \|_{L^4}^2 \\
& \quad + C \| \n_t \|_{L^q} \| \na \mathbf{u} \|_{L^{\frac{2q}{q-2}}} \| \na \mathbf{u}_t \|_{L^2}
+ C \| \n_t \|_{L^2} \| \na \mathbf{u}_t \|_{L^2} \\
& \le C \| \mathbf{u}_t \|_{L^2}^{\frac{4-N}{2}} \| \na \mathbf{u}_t \|_{L^2}^{\frac{N}{2}}
+ C ( \| \mathbf{u}_t \|_{L^2} + \| \na \mathbf{u}_t \|_{L^2}) (1 + \| \na^2 \mathbf{u} \|_{L^2}) (1 + \| \mathbf{u}_t \|_{L^2}) \\
& \le C \| \mathbf{u}_t \|_{L^2}^{\frac{4-N}{2}} \| \na \mathbf{u}_t \|_{L^2}^{\frac{N}{2}}
+ C ( \| \mathbf{u}_t \|_{L^2} + \| \na \mathbf{u}_t \|_{L^2}) (1 + \| \mathbf{u}_t \|_{L^2}^2) \\
& \le \ep \int \n^\alpha |\na \mathbf{u}_t|^2 dx
+ C(\ep) + C(\ep) \| \sqrt{\n} \mathbf{u}_t \|_{L^2}^4,
\ea\ee
where we have used the following estimate
\be\la{h23}\ba
\| \n_t \|_{L^q}
& \le \left( \| \n \div \mathbf{u} \|_{L^q} + \| \mathbf{u} \cdot \na \n \|_{L^q} \right) \\
& \le C \left( \| \na \mathbf{u} \|_{H^1} + \| \mathbf{u} \|_{H^2} \| \na \n \|_{L^q} \right) \\
& \le C + C \| \mathbf{u}_t \|_{L^2},
\ea\ee
due to $(\ref{ns})_1$, (\ref{h01}), (\ref{h11}), and (\ref{h17}).

Choosing $\ep>0$ sufficiently small in (\ref{h22}) and using (\ref{h01}), (\ref{h11}), (\ref{h112}), and Gr\"onwall's inequality, we obtain
\be\la{h24}\ba
\sup_{0 \le t \le T} \| \mathbf{u}_t \|_{L^2}^2
+ \int_0^T \| \na \mathbf{u}_t \|_{L^2}^2 dt \le C.
\ea\ee
Furthermore, we conclude from (\ref{h24}), (\ref{h23}), (\ref{h01}), and (\ref{h17}) that
\be\la{h25}\ba
\sup_{0 \le t \le T} \left( \| \na^2 \mathbf{u} \|^2_{L^2} + \| \n_t \|_{L^q} \right) \le C,
\ea\ee
which together with (\ref{h24}) gives (\ref{h02}) and finishes the proof of Lemma \ref{hl2}.
\end{proof}

\begin{lemma}\la{hl3}
There exists a positive constant $C$ depending only on $T$, $\alpha$, $\tilde{\n}$, $\ga$, $\underline{\n_0}$, $\| \n_0 - \tilde{\n} \|_{H^3}$, and $\| \mathbf{u}_0 \|_{H^2}$ such that
\be\la{h03}\ba
\sup_{0 \le t \le T} \left( \| \na^2 \n \|_{L^2}^2 + \| \na \n_t \|_{L^2}^2 \right)
+ \int_0^T \| \na^3 \mathbf{u} \|_{L^2}^2 dt \le C.
\ea\ee
\end{lemma}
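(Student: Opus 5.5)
The plan is to derive a transport-type equation for $\na^2 \n$ from $(\ref{ns})_1$, estimate it with Gr\"onwall's inequality, and couple it with the elliptic estimate (\ref{h15}) at level $k=1$, $p=2$ to control $\na^3 \mathbf{u}$. By (\ref{h11}), (\ref{h01}) and (\ref{h02}), we already have uniform bounds on $\n,\n^{-1}$, $\|\na\n\|_{L^2\cap L^q}$, $\|\mathbf{u}\|_{H^2}$, $\|\mathbf{u}_t\|_{L^2}$, $\|\n_t\|_{L^q}$, and $\int_0^T\|\na\mathbf{u}_t\|_{L^2}^2dt$. In particular $\|\na\mathbf{u}\|_{L^\infty}\le C(1+\|\na^3\mathbf{u}\|_{L^2})$ by Lemma \ref{gn1} in $N=2,3$. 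Since Sobolev embedding is not yet available for $\na^2\n$, we keep $\|\na\n\|_{L^q}$ with $q>3$ as the key lower-order quantity.

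First, we apply $\na^2$ to $\n_t+\mathbf{u}\cdot\na\n+\n\div\mathbf{u}=0$, multiply by $\na^2\n$, and integrate. The transport term produces $\int|\na\mathbf{u}||\na^2\n|^2$. The commutator terms are $\na^2\mathbf{u}\,\na\n\,\na^2\n$, $\na\n\,\na^2\mathbf{u}\,\na^2\n$, and $\n\na^3\mathbf{u}\,\na^2\n$. This gives
\[
\frac{d}{dt}\|\na^2\n\|_{L^2}^2\le C\big(\|\na\mathbf{u}\|_{L^\infty}\|\na^2\n\|_{L^2}^2+\|\na\n\|_{L^q}\|\na^2\mathbf{u}\|_{L^{\frac{2q}{q-2}}}\|\na^2\n\|_{L^2}+\|\na^3\mathbf{u}\|_{L^2}\|\na^2\n\|_{L^2}\big).
\]
Next, we control $\|\na^3\mathbf{u}\|_{L^2}$ through (\ref{h15}) with $k=1$, $p=2$. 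Differentiating the right-hand side of (\ref{h14}) once, the dangerous terms are $\na^2 P\sim\na^2\n$ and $\na^2\n\,\na\mathbf{u}$ (from $\na\mu(\n)\cdot\na\mathbf{u}$ and $\na\lambda(\n)\div\mathbf{u}$), together with $\na\mathbf{u}_t$. The term $\na^2\n\,\na\mathbf{u}$ is bounded by $\|\na^2\n\|_{L^2}\|\na\mathbf{u}\|_{L^\infty}$, and $\|\na\mathbf{u}\|_{L^\infty}\le C\|\na\mathbf{u}\|_{L^2}^{1-\theta}\|\na^3\mathbf{u}\|_{L^2}^{\theta}$ with $\theta<1$. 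Absorbing by Young's inequality, we aim for
\[
\|\na^3\mathbf{u}\|_{L^2}\le C\big(1+\|\na\mathbf{u}_t\|_{L^2}+\|\na^2\n\|_{L^2}^{\frac{1}{1-\theta}}\big).
\]
The remaining terms, such as $|\na\n|^2|\na\mathbf{u}|$ and $\na\n\,\mathbf{u}_t$, are lower order via $\|\na\n\|_{L^q}$ and $\|\mathbf{u}_t\|_{H^1}$.

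The \textbf{main obstacle} is the superlinear power $\|\na^2\n\|^{1/(1-\theta)}$, which would spoil Gr\"onwall. I would avoid it by not interpolating in the elliptic estimate. Instead, I would use the Beale--Kato--Majda-free bound $\|\na\mathbf{u}\|_{L^\infty}\le C(1+\|\na^2\mathbf{u}\|_{L^q})$ together with the $W^{1,q}$ elliptic estimate (\ref{h15}) with $k=0$, $p=q$. This gives
\[
\|\na^2\mathbf{u}\|_{L^q}\le C\big(\|\mathbf{u}_t\|_{L^q}+\|\na\n\|_{L^q}(1+\|\na\mathbf{u}\|_{L^\infty})+1\big).
\]
The term $\|\na\n\|_{L^q}\|\na\mathbf{u}\|_{L^\infty}$ is then handled by interpolating $\na\mathbf{u}$ between $L^2$-type norms of $\na^2\mathbf{u}$ (bounded) and $\|\na^2\mathbf{u}\|_{L^q}$, followed by absorption. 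This yields $\|\na\mathbf{u}\|_{L^\infty}\le C(1+\|\na\mathbf{u}_t\|_{L^2})$, which is $L^2$ in time. Hence $\int_0^T\|\na\mathbf{u}\|_{L^\infty}dt\le C$ and $\int_0^T\|\na^2\mathbf{u}\|_{L^q}^2dt\le C$. The $\na^2\n$ inequality is then linear:
\[
\frac{d}{dt}\|\na^2\n\|_{L^2}^2\le C\big(1+\|\na\mathbf{u}_t\|_{L^2}^2\big)\|\na^2\n\|_{L^2}^2+C\big(1+\|\na\mathbf{u}_t\|_{L^2}^2\big),
\]
after bounding $\|\na^3\mathbf{u}\|_{L^2}\le C(1+\|\na\mathbf{u}_t\|_{L^2}+\|\na^2\n\|_{L^2})$. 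In this bound, $\na^2\n\,\na\mathbf{u}$ is now estimated by $\|\na\mathbf{u}\|_{L^\infty}\|\na^2\n\|_{L^2}$, which after Young is still integrable. Gr\"onwall's inequality and $\n_0-\tilde\n\in H^3$ give $\sup_t\|\na^2\n\|_{L^2}\le C$, and then $\int_0^T\|\na^3\mathbf{u}\|_{L^2}^2dt\le C$.

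Finally, $\na\n_t=-\na(\mathbf{u}\cdot\na\n+\n\div\mathbf{u})$ is bounded in $L^2$ using $\|\mathbf{u}\|_{L^\infty}\|\na^2\n\|_{L^2}$, $\|\na\mathbf{u}\|_{L^{\frac{2q}{q-2}}}\|\na\n\|_{L^q}$, and $\|\na^2\mathbf{u}\|_{L^2}$, all uniformly bounded. This completes (\ref{h03}).
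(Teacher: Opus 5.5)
Your argument is essentially the paper's proof. It uses the $\nabla^2\rho$ energy estimate from the mass equation. It avoids superlinear interpolation via the key elliptic bound $\|\nabla^2\mathbf{u}\|_{L^q}\le C(1+\|\nabla\mathbf{u}_t\|_{L^2})$, hence $\|\nabla\mathbf{u}\|_{L^\infty}\le C(1+\|\nabla\mathbf{u}_t\|_{L^2})$. It then applies the $H^1$ elliptic estimate for $\nabla^3\mathbf{u}$, Gr\"onwall, and finally reads off $\nabla\rho_t$ from the equation.

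One small correction: the elliptic estimate actually gives the product bound $\|\nabla^3\mathbf{u}\|_{L^2}\le C(1+\|\nabla^2\rho\|_{L^2})(1+\|\nabla\mathbf{u}_t\|_{L^2})$, not the sum you wrote. When $q<4$, which can happen for $N=3$, the term $|\nabla\rho|^2|\nabla\mathbf{u}|$ is not purely lower order; you need the interpolation $\|\nabla\rho\|_{L^4}^2\le C\|\nabla\rho\|_{L^q}^{2(1-a)}\|\nabla^2\rho\|_{L^2}^{2a}$ with $2a<1$. The product bound still leads to exactly the Gr\"onwall inequality you state.
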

\begin{proof}
First, applying $\na^2$ to $(\ref{ns})_1$, multiplying the resulting equation by $\na^2 \n$, integrating by parts over $\mathbb{R}^N$, and using (\ref{h11}), (\ref{h02}), and the Sobolev inequality, we obtain
\be\la{h32}\ba
& \frac{d}{dt} \int |\na^2 \n|^2 dx \\
& \le C \int \left( |\na^2 \mathbf{u}| |\na \n| + |\na^2 \n| |\na \mathbf{u}| + |\na^3 \mathbf{u}| \right) |\na^2 \n| dx \\
& \le C \| \na^2 \mathbf{u} \|_{L^q} \| \na \n \|_{L^{\frac{2q}{q-2}}} \| \na^2 \n \|_{L^2}
+ C \| \na \mathbf{u} \|_{L^\infty} \| \na^2 \n \|_{L^2}^2
+ C \| \na^3 \mathbf{u} \|_{L^2} \| \na^2 \n \|_{L^2} \\
& \le C \left( 1 + \| \na^2 \mathbf{u} \|_{L^q} \right)
\left( 1 + \| \na^2 \n \|_{L^2}^2 \right)
+ C \| \na^3 \mathbf{u} \|_{L^2} \| \na^2 \n \|_{L^2}.
\ea\ee
It follows from (\ref{gn11}), (\ref{h15}), (\ref{h01}), (\ref{h11}), (\ref{h02}), and Young's inequality that
\be\la{h33}\ba
\| \na^2 \mathbf{u} \|_{L^q}
& \le C \| \n^{-\alpha} \left( \n \mathbf{u}_t + \n \mathbf{u} \cdot \na \mathbf{u} + \na P - \na \mu(\n) \cdot \na \mathbf{u} - \na \lambda(\n) \div \mathbf{u} \right) \|_{L^{q}} \\
& \le C \left( \| \mathbf{u}_t \|_{L^q} + \| \mathbf{u} \|_{L^q} \| \na \mathbf{u} \|_{L^\infty} + \| \na \n \|_{L^q} + \| \na \n \|_{L^q} \| \na \mathbf{u} \|_{L^\infty} \right) \\
& \le C \left( 1 + \| \na \mathbf{u}_t \|_{L^2}
+ \| \na \mathbf{u} \|_{L^2}^{\frac{2(q-N)}{N(q-2)+2q}} \| \na^2 \mathbf{u} \|_{L^q}^{\frac{qN}{N(q-2)+2q}} \right) \\
& \le \frac{1}{2} \| \na^2 \mathbf{u} \|_{L^q} + C \left( 1 + \| \na \mathbf{u}_t \|_{L^2} \right),
\ea\ee
which gives
\be\la{h34}\ba
\| \na^2 \mathbf{u} \|_{L^q}
\le C \left( 1 + \| \na \mathbf{u}_t \|_{L^2} \right).
\ea\ee
By virtue of (\ref{h01}), (\ref{h11}), (\ref{h15}), (\ref{h02}), and (\ref{h34}), we have
\be\la{h35}\ba
& \| \na^3 \mathbf{u} \|_{L^2} \\
& \le C \| \n^{-\alpha} \left( \n \mathbf{u}_t + \n \mathbf{u} \cdot \na \mathbf{u} + \na P - \na \mu(\n) \cdot \na \mathbf{u} - \na \lambda(\n) \div \mathbf{u} \right) \|_{H^1} \\
& \le C \left( \| \mathbf{u}_t \|_{L^2} + \| \mathbf{u} \|_{L^4} \| \na \mathbf{u} \|_{L^4}
+ \| \na \n \|_{L^2} + \| \na \n \|_{L^4} \| \na \mathbf{u} \|_{L^4} \right) \\
& \quad + C \left( \| \na \n \|_{L^4} \| \mathbf{u} \|_{L^4} + \| \na \mathbf{u}_t \|_{L^2} + \| \na \n \|_{L^4} \| \mathbf{u} \|_{L^\infty} \| \na \mathbf{u} \|_{L^4} + \| \na \mathbf{u} \|^2_{L^4} + \| \mathbf{u} \|_{L^\infty} \| \na^2 \mathbf{u} \|_{L^2} \right) \\
& \quad + C \left( \| \na \n \|^2_{L^4} + \| \na^2 \n \|_{L^2} + \| \na \n \|_{L^4}^2 \| \na \mathbf{u} \|_{L^\infty} + \| \na^2 \n \|_{L^2} \| \na \mathbf{u} \|_{L^\infty} + \| \na \n \|_{L^{\frac{2q}{q-2}}} \| \na^2 \mathbf{u} \|_{L^q} \right) \\
& \le C \left( 1 + \| \na \n \|^2_{L^4} + \| \na^2 \n \|_{L^2} \right) \left( 1 + \| \na \mathbf{u}_t \|_{L^2} \right) \\
& \le C \left( 1 + \| \na \n \|^{\frac{q(N-4)}{2q-N(q-2)}}_{L^q} \| \na^2 \n \|^{\frac{N(4-q)}{2q-N(q-2)}}_{L^2} + \| \na^2 \n \|_{L^2} \right) \left( 1 + \| \na \mathbf{u}_t \|_{L^2} \right) \\
& \le C \left( 1 + \| \na^2 \n \|_{L^2} \right) \left( 1 + \| \na \mathbf{u}_t \|_{L^2} \right).
\ea\ee
Combining (\ref{h32}), (\ref{h34}), and (\ref{h35}) leads to
\be\la{h36}\ba
\frac{d}{dt} \int |\na^2 \n|^2 dx
\le C \left( 1 + \| \na^2 \n \|^2_{L^2} \right) \left( 1 + \| \na \mathbf{u}_t \|_{L^2}^2 \right),
\ea\ee
which together with (\ref{h02}) and Gr\"onwall's inequality implies
\be\la{h37}\ba
\sup_{0 \le t \le T} \| \na^2 \n \|_{L^2}^2 \le C.
\ea\ee
This, along with (\ref{h35}) and (\ref{h02}), yields
\be\la{h38}\ba
\int_0^T \| \na^3 \mathbf{u} \|_{L^2}^2 dt \le C.
\ea\ee
Moreover, from $(\ref{ns})_1$, we deduce that $\na \n$ satisfies
\be\la{h39}\ba
\na \n_t + \na \mathbf{u} \cdot \na \n + \mathbf{u} \cdot \na^2 \n + \na \n \div \mathbf{u} + \n \na \div \mathbf{u} = 0.
\ea\ee
Using (\ref{h39}), (\ref{h37}), (\ref{h01}), (\ref{h11}), and (\ref{h02}), we arrive at
\be\la{h310}\ba
\| \na \n_t \|_{L^2}
\le C \left( \| \na \mathbf{u} \|_{L^4} \| \na \n \|_{L^4} + \| \mathbf{u} \|_{L^\infty} \| \na^2 \n \|_{L^2} + \| \na^2 \mathbf{u} \|_{L^2} \right)
\le C.
\ea\ee
The combination of (\ref{h37}), (\ref{h38}), and (\ref{h310}) gives (\ref{h03}) and completes the proof of Lemma \ref{hl3}.
\end{proof}

\begin{lemma}\la{hl4}
There exists a positive constant $C$ depending only on $T$, $\alpha$, $\tilde{\n}$, $\ga$, $\underline{\n_0}$, $\| \n_0 - \tilde{\n} \|_{H^3}$, and $\| \mathbf{u}_0 \|_{H^3}$ such that
\be\la{h04}\ba
\sup_{0 \le t \le T} \left( \| \na \mathbf{u}_t \|_{L^2}^2 + \| \na^3 \mathbf{u} \|_{L^2}^2 + \| \n_{tt} \|^2_{L^2} \right)
+ \int_0^T \left( \| \mathbf{u}_{tt} \|_{L^2}^2 + \| \na^2 \mathbf{u}_t \|_{L^2}^2 \right) dt \le C.
\ea\ee
\end{lemma}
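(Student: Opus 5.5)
The plan is the standard one for this level of estimate: test the time-differentiated momentum equation (\ref{h21}) with $\mathbf{u}_{tt}$, close with Gr\"onwall, and then recover $\na^3 \mathbf{u}$ and $\na^2 \mathbf{u}_t$ from elliptic estimates. Throughout we use (\ref{h11}), (\ref{h01})--(\ref{h03}), (\ref{h17}), (\ref{h34}) and (\ref{h35}).

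Multiplying (\ref{h21}) by $\mathbf{u}_{tt}$ and integrating over $\mathbb{R}^N$ gives
\[
\frac{1}{2}\frac{d}{dt}\int \n^\alpha\big(|\na \mathbf{u}_t|^2+(\alpha-1)(\div \mathbf{u}_t)^2\big)dx+\int \n|\mathbf{u}_{tt}|^2dx
=\frac12\int (\n^\alpha)_t\big(|\na \mathbf{u}_t|^2+(\alpha-1)(\div\mathbf{u}_t)^2\big)dx+\sum_i I_i .
\]
Here the $I_i$ collect the contributions of $\n_t\mathbf{u}_t$, $\n\mathbf{u}\cdot\na\mathbf{u}_t$, $\n_t\mathbf{u}\cdot\na\mathbf{u}$, $\n\mathbf{u}_t\cdot\na\mathbf{u}$, $\div((\n^\alpha)_t\na\mathbf{u})$, $\na((\n^\alpha)_t\div\mathbf{u})$ and $\na P_t$, each tested against $\mathbf{u}_{tt}$. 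By (\ref{h112}) the left-hand energy is equivalent to $\|\na\mathbf{u}_t\|_{L^2}^2$.

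The lower-order terms are handled as follows.
\begin{itemize}
\item The terms not involving derivatives of $(\n^\alpha)_t$ or $P_t$ are bounded by $\ep\|\sqrt\n\mathbf{u}_{tt}\|_{L^2}^2+C(1+\|\na\mathbf{u}_t\|_{L^2}^2)$, using $\|\n_t\|_{L^q}\le C$ from (\ref{h25}), $\|\mathbf{u}\|_{L^\infty}+\|\na\mathbf{u}\|_{L^4}\le C$, and the Gagliardo--Nirenberg inequality.
\item For the first term on the right, $\|(\n^\alpha)_t\|_{L^q}\le C$, so by Gagliardo--Nirenberg it is at most $C\|\na\mathbf{u}_t\|_{L^2}^{2-N/q}\|\na^2\mathbf{u}_t\|_{L^2}^{N/q}$.
\end{itemize}

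The terms with $\na(\n^\alpha)_t$ and $\na P_t$ are the delicate ones. They contain $\na\n_t$, which is only in $L^\infty_tL^2$ by (\ref{h03}), and integrating them by parts onto $\mathbf{u}_{tt}$ is not allowed. Instead I will pull the time derivative out, writing for instance
\[
-\int P_t\div\mathbf{u}_{tt}\,dx=-\frac{d}{dt}\int P_t\div\mathbf{u}_t\,dx+\int P_{tt}\div\mathbf{u}_t\,dx ,
\]
and similarly
\[
\int(\n^\alpha)_t\na\mathbf{u}:\na\mathbf{u}_{tt}\,dx=\frac{d}{dt}\int(\n^\alpha)_t\na\mathbf{u}:\na\mathbf{u}_t\,dx-\int\big((\n^\alpha)_{tt}\na\mathbf{u}+(\n^\alpha)_t\na\mathbf{u}_t\big):\na\mathbf{u}_t\,dx .
\]
This is why $\|\n_{tt}\|_{L^2}$ enters the statement. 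From $\n_{tt}=-\div(\n_t\mathbf{u}+\n\mathbf{u}_t)$ we get
\[
\|\n_{tt}\|_{L^2}\le C(\|\na\n_t\|_{L^2}+\|\n_t\|_{L^q}+\|\na\mathbf{u}_t\|_{L^2}+\|\mathbf{u}_t\|_{L^{2q/(q-2)}})\le C(1+\|\na\mathbf{u}_t\|_{L^2}),
\]
and therefore $\|(\n^\alpha)_{tt}\|_{L^2}$ obeys the same bound. The boundary terms $\int P_t\div\mathbf{u}_t\,dx$ and $\int(\n^\alpha)_t\na\mathbf{u}:\na\mathbf{u}_t\,dx$ are absorbed as $\frac14\|\na\mathbf{u}_t\|_{L^2}^2+C$.

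To close, $\|\na^2\mathbf{u}_t\|_{L^2}$ is needed. I apply the elliptic estimate (\ref{h15}) to the time-differentiated system: $\Delta\mathbf{u}_t+(\alpha-1)\na\div\mathbf{u}_t$ equals $\n^{-\alpha}$ times the remaining terms of (\ref{h21}). This yields
\[
\|\na^2\mathbf{u}_t\|_{L^2}\le C\big(\|\sqrt\n\mathbf{u}_{tt}\|_{L^2}+\|\na\n_t\|_{L^2}+\|\na\mathbf{u}_t\|_{L^2}+1\big)\big(1+\|\na^2\mathbf{u}\|_{L^q}\big),
\]
with $\|\na^2\mathbf{u}\|_{L^q}\le C(1+\|\na\mathbf{u}_t\|_{L^2})$ by (\ref{h34}). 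Since $\|\na^2\mathbf{u}_t\|_{L^2}$ enters only with power $N/q<1$ or with a small coefficient, Young's inequality absorbs it into $\int\n|\mathbf{u}_{tt}|^2dx$. We obtain
\[
\frac{d}{dt}E(t)+\frac12\|\sqrt\n\mathbf{u}_{tt}\|_{L^2}^2\le C(1+\|\na\mathbf{u}_t\|_{L^2}^2)E(t)+C ,
\]
where $E$ is the modified energy. Gr\"onwall's inequality together with $\int_0^T\|\na\mathbf{u}_t\|_{L^2}^2dt\le C$ from (\ref{h02}) then closes the estimate, since the initial value $E(0)$ is controlled by $\|\mathbf{u}_0\|_{H^3}$: indeed $\mathbf{u}_t(0)$ is computed from the equation and lies in $H^1$.

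Afterwards, (\ref{h35}) gives $\sup_t\|\na^3\mathbf{u}\|_{L^2}\le C$, the elliptic estimate gives $\int_0^T\|\na^2\mathbf{u}_t\|_{L^2}^2dt\le C$, and the bound on $\n_{tt}$ follows. I expect the main obstacle to be the $\na P_t$ and $\na(\n^\alpha)_t$ terms, and specifically making the time-derivative trick compatible with the limited integrability of $\na\n_t$. I would try that trick first; if the absorption of $\|\na^2\mathbf{u}_t\|_{L^2}$ fails, the fallback is to test with $\mathbf{u}_{tt}$ weighted by $\n^{1-\alpha}$, in analogy with (\ref{h18}).
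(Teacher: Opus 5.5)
Your proposal is correct, but it departs from the paper's proof at exactly the point you flag as delicate, and that difficulty does not actually arise.

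The paper applies $\p_t$ to the $\n^{\alpha}$-divided system (\ref{h14}), which gives (\ref{h41}), and tests with $\mathbf{u}_{tt}$. The left side is then $\frac12\frac{d}{dt}\int(|\na\mathbf{u}_t|^2+(\alpha-1)(\div\mathbf{u}_t)^2)dx+\int\n^{1-\alpha}|\mathbf{u}_{tt}|^2dx$, with no commutator term. Every forcing term, including those containing $\na\n_t$, is paired directly with $\mathbf{u}_{tt}$. For example, the paper uses $\|\na\n_t\|_{L^2}\|\mathbf{u}_{tt}\|_{L^2}$ and $\|\na\n_t\|_{L^2}\|\na\mathbf{u}\|_{L^\infty}\|\mathbf{u}_{tt}\|_{L^2}$. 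These are harmless because $\na\n_t\in L^\infty(0,T;L^2)$ by (\ref{h03}) and $\|\na\mathbf{u}\|_{L^\infty}\le C(1+\|\na\mathbf{u}_t\|_{L^2})$ by (\ref{h34}). The elliptic bound (\ref{h44}) then absorbs $\|\na\mathbf{u}_t\|_{L^4}$, giving (\ref{h45}). Its right side $C(1+\|\na\mathbf{u}_t\|_{L^2}^2)$ is simply integrated using (\ref{h02}). There is no Gr\"onwall step and no modified energy, and $\n_{tt}$ is estimated only afterwards via (\ref{h48}).

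Your route is heavier but also closes. You use the undivided equation (\ref{h21}), handle the commutator $\frac12\int(\n^\alpha)_t(\cdots)dx$ by Gagliardo--Nirenberg, and integrate by parts in time for the $\na P_t$ and $\div((\n^\alpha)_t\na\mathbf{u})$ terms. This costs you three things:
\begin{itemize}
\item You need $\|\n_{tt}\|_{L^2}\le C(1+\|\na\mathbf{u}_t\|_{L^2})$ inside the energy argument rather than afterwards.
\item You get superlinear terms such as $\|(\n^\alpha)_{tt}\|_{L^2}\|\na\mathbf{u}\|_{L^\infty}\|\na\mathbf{u}_t\|_{L^2}$, which is cubic in $\|\na\mathbf{u}_t\|_{L^2}$.
\item You therefore genuinely need Gr\"onwall with the $L^1(0,T)$ weight $1+\|\na\mathbf{u}_t\|_{L^2}^2$, applied to $E$ shifted by a constant so that it is nonnegative.
\end{itemize}

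The time-integration trick is the right tool when $\na\n_t$ is not available in $L^2$. Here it buys nothing. Your own elliptic estimate for $\na^2\mathbf{u}_t$ already uses $\|\na\n_t\|_{L^2}$ as a forcing term, and that is exactly the input the direct pairing needs.

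Your stated fallback is essentially the paper's proof. Testing (\ref{h21}) with $\n^{-\alpha}\mathbf{u}_{tt}$, which is equivalent to testing (\ref{h41}) with $\mathbf{u}_{tt}$, as in (\ref{h18}), gives the commutator-free energy identity the paper uses.
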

\begin{proof}
First, applying $\p_t$ to (\ref{h14}) gives
\be\la{h41}\ba
& \n^{1-\alpha} \mathbf{u}_{tt} - \Delta \mathbf{u}_t - (\alpha - 1) \na \div \mathbf{u}_t \\
& = - (\n^{1-\alpha})_t \mathbf{u}_t
- \left( \n^{-\alpha} \left( \n \mathbf{u} \cdot \na \mathbf{u} + \na P - \na \mu(\n) \cdot \na \mathbf{u} - \na \lambda(\n) \div \mathbf{u} \right) \right)_t.
\ea\ee
Multiplying (\ref{h41}) by $\mathbf{u}_{tt}$, integrating by parts over $\mathbb{R}^N$, and using (\ref{h01}), (\ref{h11}), (\ref{h02}), (\ref{h03}), and (\ref{h34}), we derive
\be\la{h42}\ba
& \frac{1}{2} \frac{d}{dt} \int \left(|\na \mathbf{u}_t|^2 + (\alpha-1) (\div \mathbf{u}_t)^2 \right) dx
+ \int \n^{1-\alpha} |\mathbf{u}_{tt}|^2 dx \\
& \le C \int \left( |\n_t| |\mathbf{u}_t| + |\n_t| |\mathbf{u}| |\na \mathbf{u}| + |\mathbf{u}_t| |\na \mathbf{u}| + |\mathbf{u}| |\na \mathbf{u}_t| \right) |\mathbf{u}_{tt}| dx \\
& \quad + C \int \left( |\n_t| |\na \n| + |\na \n_t| + |\n_t| |\na \n| |\na \mathbf{u}| + |\na \n_t| |\na \mathbf{u}| + |\na \n| |\na \mathbf{u}_t| \right) |\mathbf{u}_{tt}| dx \\
& \le C \left( \| \n_t \|_{L^4} \| \mathbf{u}_t \|_{L^4} + \| \n_t \|_{L^4} \| \mathbf{u} \|_{L^\infty} \| \na \mathbf{u} \|_{L^4} + \| \mathbf{u}_t \|_{L^4} \| \na \mathbf{u} \|_{L^4} \right) \| \mathbf{u}_{tt} \|_{L^2} \\
& \quad + C \left( \| \mathbf{u} \|_{L^\infty} \| \na \mathbf{u}_t \|_{L^2} + \| \n_t \|_{L^4} \| \na \n \|_{L^4} + \| \na \n_t \|_{L^2} \right) \| \mathbf{u}_{tt} \|_{L^2} \\
& \quad + C \left( \| \n_t \|_{L^4} \| \na \n \|_{L^4} \| \na \mathbf{u} \|_{L^\infty} + \| \na \n_t \|_{L^2} \| \na \mathbf{u} \|_{L^\infty} + \| \na \n \|_{L^4} \| \na \mathbf{u}_t \|_{L^4} \right) \| \mathbf{u}_{tt} \|_{L^2} \\
& \le C \left( 1 + \| \na \mathbf{u}_t \|_{L^2} + \| \na \mathbf{u}_t \|_{L^4} \right) \| \mathbf{u}_{tt} \|_{L^2}.
\ea\ee
In addition, applying standard elliptic estimates to (\ref{h41}) and using (\ref{h01}), (\ref{h11}), (\ref{h02}), (\ref{h03}), (\ref{h34}), (\ref{gn11}), and Young's inequality, we obtain
\be\la{h43}\ba
\| \na^2 \mathbf{u}_t \|_{L^2}
& \le C \| \left( \n^{-\alpha} \left( \n \mathbf{u}_t + \n \mathbf{u} \cdot \na \mathbf{u} + \na P - \na \mu(\n) \cdot \na \mathbf{u} - \na \lambda(\n) \div \mathbf{u} \right) \right)_t \|_{L^2} \\
& \le C \left( \| \n_t \|_{L^4} \| \mathbf{u}_t \|_{L^4} + \| \mathbf{u}_{tt} \|_{L^2} + \| \n_t \|_{L^4} \| \mathbf{u} \|_{L^\infty} \| \na \mathbf{u} \|_{L^4} \right) \\
& \quad + C \left( \| \mathbf{u}_t \|_{L^4} \| \na \mathbf{u} \|_{L^4} + \| \mathbf{u} \|_{L^\infty} \| \na \mathbf{u}_t \|_{L^2} + \| \n_t \|_{L^4} \| \na \n \|_{L^4} + \| \na \n_t \|_{L^2} \right) \\
& \quad + C \left( \| \n_t \|_{L^4} \| \na \n \|_{L^4} \| \na \mathbf{u} \|_{L^\infty} + \| \na \n_t \|_{L^2} \| \na \mathbf{u} \|_{L^\infty} + \| \na \n \|_{L^4} \| \na \mathbf{u}_t \|_{L^4} \right) \\
& \le C \left( 1 + \| \mathbf{u}_{tt} \|_{L^2} + \| \na \mathbf{u}_t \|_{L^2} + \| \na \mathbf{u}_t \|_{L^2}^{\frac{4-N}{4}} \| \na^2 \mathbf{u}_t \|_{L^2}^{\frac{N}{4}} \right) \\
& \le \frac{1}{2} \| \na^2 \mathbf{u}_t \|_{L^2}
+ C \left( 1 + \| \mathbf{u}_{tt} \|_{L^2} + \| \na \mathbf{u}_t \|_{L^2} \right),
\ea\ee
which implies
\be\la{h44}\ba
\| \na^2 \mathbf{u}_t \|_{L^2}
\le C \left( 1 + \| \mathbf{u}_{tt} \|_{L^2} + \| \na \mathbf{u}_t \|_{L^2} \right).
\ea\ee
It follows from (\ref{h42}), (\ref{h44}), (\ref{h11}), (\ref{gn11}), and Young's inequality that
\be\la{h45}\ba
& \frac{1}{2} \frac{d}{dt} \int \left(|\na \mathbf{u}_t|^2 + (\alpha-1) (\div \mathbf{u}_t)^2 \right) dx
+ \int \n^{1-\alpha} |\mathbf{u}_{tt}|^2 dx \\
& \le C \left( 1 + \| \na \mathbf{u}_t \|_{L^2} + \| \na \mathbf{u}_t \|_{L^2}^{\frac{4-N}{4}} \| \na^2 \mathbf{u}_t \|_{L^2}^{\frac{N}{4}} \right) \| \mathbf{u}_{tt} \|_{L^2} \\
& \le C \left( 1 + \| \na \mathbf{u}_t \|_{L^2} + \| \na \mathbf{u}_t \|_{L^2}^{\frac{4-N}{4}} \| \mathbf{u}_{tt} \|_{L^2}^{\frac{N}{4}} \right) \| \mathbf{u}_{tt} \|_{L^2} \\
& \le \frac{1}{2} \int \n^{1-\alpha} |\mathbf{u}_{tt}|^2 dx
+ C \left( 1 + \| \na \mathbf{u}_t \|^2_{L^2} \right).
\ea\ee
Integrating (\ref{h45}) over $(0,T)$ and using (\ref{h02}) and (\ref{h112}), we arrive at
\be\la{h46}\ba
\sup_{0 \le t \le T} \| \na \mathbf{u}_t \|_{L^2}^2
+ \int_0^T \| \mathbf{u}_{tt} \|_{L^2}^2 dt \le C,
\ea\ee
which together with (\ref{h03}), (\ref{h35}), and (\ref{h44}) yields
\be\la{h47}\ba
\sup_{0 \le t \le T} \| \na^3 \mathbf{u} \|_{L^2}^2
+ \int_0^T \| \na^2 \mathbf{u}_t \|_{L^2}^2 dt \le C.
\ea\ee
Moreover, from $(\ref{ns})_1$, we deduce that
\be\la{h48}\ba
|\n_{tt}| \le C \left( |\mathbf{u}_t| |\na \n| + |\mathbf{u}| |\na \n_t| + |\n_t| |\na \mathbf{u}| + \n |\na \mathbf{u}_t| \right).
\ea\ee
Using (\ref{h01}), (\ref{h02}), (\ref{h03}), (\ref{h46}), (\ref{h48}), and H\"older's inequality, we obtain
\be\la{h49}\ba
\| \n_{tt} \|_{L^2}
& \le C \left( \| \mathbf{u}_t \|_{L^4} \| \na \n \|_{L^4} + \| \mathbf{u} \|_{L^\infty} \| \na \n_t \|_{L^2}
+ \| \n_t \|_{L^4} \| \na \mathbf{u} \|_{L^4} + \| \na \mathbf{u}_t \|_{L^2} \right) \\
& \le C \left( 1 + \| \na \mathbf{u}_t \|_{L^2}  \right) \le C.
\ea\ee
Combining (\ref{h46}), (\ref{h47}), and (\ref{h49}) gives (\ref{h04}), thereby completing the proof of Lemma \ref{hl4}.
\end{proof}

\begin{lemma}\la{hl5}
There exists a positive constant $C$ depending only on $T$, $\alpha$, $\tilde{\n}$, $\ga$, $\underline{\n_0}$, $\| \n_0 - \tilde{\n} \|_{H^3}$, and $\| \mathbf{u}_0 \|_{H^3}$ such that
\be\la{h05}\ba
\sup_{0 \le t \le T} \left( \| \na^3 \n \|_{L^2}^2 + \| \na^2 \n_t \|_{L^2}^2 \right)
+ \int_0^T \| \na^4 \mathbf{u} \|_{L^2}^2 dt \le C.
\ea\ee
\end{lemma}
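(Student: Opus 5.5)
The estimate for $\na^3\n$ will mirror the proof of Lemma \ref{hl3}, one derivative higher. I would apply $\na^3$ to $(\ref{ns})_1$, multiply by $\na^3 \n$, and integrate by parts over $\mathbb{R}^N$. The transport term $\mathbf{u}\cdot\na(\na^3\n)$ is handled by integration by parts and gives $\|\div\mathbf{u}\|_{L^\infty}\|\na^3\n\|_{L^2}^2$. The remaining commutator terms are $\na^k\mathbf{u}\,\na^{4-k}\n$ for $k=1,2,3$, together with the top term $\n\na^4\mathbf{u}$. This yields
\be\nonumber\ba
\frac{d}{dt}\|\na^3\n\|_{L^2}^2 \le C\left(\|\na\mathbf{u}\|_{L^\infty}\|\na^3\n\|_{L^2} + \|\na^2\mathbf{u}\|_{L^4}\|\na^2\n\|_{L^4} + \|\na^3\mathbf{u}\|_{L^2}\|\na\n\|_{L^\infty} + \|\na^4\mathbf{u}\|_{L^2}\right)\|\na^3\n\|_{L^2}.
\ea\ee
All lower-order quantities are uniformly bounded by Lemmas \ref{hl1}--\ref{hl4}, the density bounds (\ref{h11}), and Gagliardo--Nirenberg (\ref{gn11}). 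In particular $\|\na\mathbf{u}\|_{L^\infty}\le C$ from $\|\na^3\mathbf{u}\|_{L^2}\le C$. We also have $\|\na^2\n\|_{L^4}+\|\na\n\|_{L^\infty}\le C(1+\|\na^3\n\|_{L^2})$.

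The key step, which I expect to be the main obstacle, is controlling $\|\na^4\mathbf{u}\|_{L^2}$ through the elliptic estimate (\ref{h15}) with $k=2$, $p=2$. The right-hand side is $\|\n^{-\alpha}(\n\mathbf{u}_t+\n\mathbf{u}\cdot\na\mathbf{u}+\na P-\na\mu(\n)\cdot\na\mathbf{u}-\na\lambda(\n)\div\mathbf{u})\|_{H^2}$. After expanding by the product rule, its worst terms are linear in $\na^3\n$: namely $\na^3 P$, $\na^3\n\,\na\mathbf{u}$ (from $\na^2$ of $\na\mu\cdot\na\mathbf{u}$), and $\na^2(\n^{-\alpha})$ times lower-order terms. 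I would aim to show
\be\nonumber\ba
\|\na^4\mathbf{u}\|_{L^2}\le C\left(1+\|\na^2\mathbf{u}_t\|_{L^2}\right)\left(1+\|\na^3\n\|_{L^2}\right),
\ea\ee
using $\|\na\mathbf{u}\|_{W^{1,\infty}}$-type bounds only where they are available. Care is needed with the terms $\na^2\n\,\na^2\mathbf{u}$ and $\na\n\,\na^3\mathbf{u}$. For these I would use $\|\na^2\mathbf{u}\|_{L^\infty}\le C\|\na^2\mathbf{u}\|_{H^1}^{1-N/4}\|\na^4\mathbf{u}\|_{L^2}^{N/4}$ and absorb part of $\|\na^4\mathbf{u}\|_{L^2}$ into the left via Young's inequality. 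Alternatively, I would estimate $\|\na^3\mathbf{u}\|_{L^4}$ and $\|\na\n\|_{L^4}$ so that all such terms become products of bounded factors with $(1+\|\na^3\n\|_{L^2})$.

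Inserting this into the differential inequality gives
\be\nonumber\ba
\frac{d}{dt}\|\na^3\n\|_{L^2}^2\le C\left(1+\|\na^2\mathbf{u}_t\|_{L^2}\right)\left(1+\|\na^3\n\|_{L^2}^2\right).
\ea\ee
Since $\int_0^T\|\na^2\mathbf{u}_t\|_{L^2}^2dt\le C$ by (\ref{h04}), Gr\"onwall's inequality bounds $\sup_t\|\na^3\n\|_{L^2}$. The elliptic bound then gives $\int_0^T\|\na^4\mathbf{u}\|_{L^2}^2dt\le C$.

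Finally, I would apply $\na$ to (\ref{h39}) to express $\na^2\n_t$ through $\na^2\mathbf{u}\,\na\n$, $\na\mathbf{u}\,\na^2\n$, $\mathbf{u}\,\na^3\n$, and $\n\na^3\mathbf{u}$. Estimating these in $L^2$ with Hölder's inequality and the bounds above yields $\sup_t\|\na^2\n_t\|_{L^2}\le C$, which completes (\ref{h05}).
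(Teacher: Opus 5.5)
Your proposal is correct and follows essentially the same route as the paper. Both arguments run a $\nabla^3$ energy estimate for the continuity equation, control $\|\nabla^4\mathbf{u}\|_{L^2}$ through the elliptic estimate (\ref{h15}) with $k=2$, close with Gr\"onwall via (\ref{h04}), and then differentiate (\ref{h39}) to bound $\nabla^2\rho_t$. The only difference is that the paper proves the additive bound $\|\nabla^4\mathbf{u}\|_{L^2}\le C(1+\|\nabla^2\mathbf{u}_t\|_{L^2}+\|\nabla^3\rho\|_{L^2})$, for instance by splitting $\nabla^2\rho\,\mathbf{u}_t$ in $L^4\times L^4$. Your product form is weaker, but it still closes the Gr\"onwall argument because $\|\nabla^2\mathbf{u}_t\|_{L^2}\in L^1(0,T)$.
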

\begin{proof}
First, applying $\na^3$ to $(\ref{ns})_1$, multiplying the resulting equation by $\na^3 \n$, integrating by parts over $\mathbb{R}^N$, and using (\ref{h01}), (\ref{h11}), (\ref{h02}), (\ref{h03}), and (\ref{h04}), we arrive at
\be\la{h51}\ba
& \frac{d}{dt} \int |\na^3 \n|^2 dx \\
& \le C \int \left( |\na \mathbf{u}| |\na^3 \n| + |\na^2 \mathbf{u}| |\na^2 \n| + |\na^3 \mathbf{u}| |\na \n| + |\na^4 \mathbf{u}| \right) |\na^3 \n| dx \\
& \le C \left( \| \na \mathbf{u} \|_{L^\infty} \| \na^3 \n \|_{L^2}
+ \| \na^2 \mathbf{u} \|_{L^4} \| \na^2 \n \|_{L^4}
+ \| \na^3 \mathbf{u} \|_{L^2} \| \na \n \|_{L^\infty} + \| \na^4 \mathbf{u} \|_{L^2} \right) \| \na^3 \n \|_{L^2} \\
& \le C \left( 1 + \| \na^3 \n \|_{L^2} + \| \na^4 \mathbf{u} \|_{L^2} \right) \| \na^3 \n \|_{L^2}.
\ea\ee
Moreover, by virtue of (\ref{h15}), (\ref{h35}), (\ref{h01}), (\ref{h11}), (\ref{h02}), (\ref{h03}), and (\ref{h04}), we derive
\be\la{h52}\ba
& \| \na^4 \mathbf{u} \|_{L^2} \\
& \le C \| \n^{-\alpha} \left( \n \mathbf{u}_t + \n \mathbf{u} \cdot \na \mathbf{u} + \na P - \na \mu(\n) \cdot \na \mathbf{u} - \na \lambda(\n) \div \mathbf{u} \right) \|_{H^2} \\
& \le C + C \left( \| |\na \n|^2 |\mathbf{u}_t| \|_{L^2} + \| |\na^2 \n| |\mathbf{u}_t| \|_{L^2} + \| |\na \n| |\na \mathbf{u}_t| \|_{L^2} + \| \na^2 \mathbf{u}_t \|_{L^2} \right) \\
& \quad + C \left( \| |\na \n|^2 |\mathbf{u}| |\na \mathbf{u}| \|_{L^2} + \| |\na^2 \n| |\mathbf{u}| |\na \mathbf{u}| \|_{L^2} + \| |\na \n| |\na \mathbf{u}|^2 \|_{L^2} + \| |\na \n| |\mathbf{u}| |\na^2 \mathbf{u}| \|_{L^2} \right) \\
& \quad + C \left( \| |\na \mathbf{u}| |\na^2 \mathbf{u}| \|_{L^2} + \| |\mathbf{u}| |\na^3 \mathbf{u}| \|_{L^2} + \| |\na \n|^3 \|_{L^2} + \| |\na \n| |\na^2 \n| \|_{L^2} + \| \na^3 \n \|_{L^2} \right) \\
& \quad + C \left( \| |\na \n|^3 |\na \mathbf{u}| \|_{L^2} + \| |\na \n| |\na^2 \n| |\na \mathbf{u}| \|_{L^2} + \| |\na \n|^2 |\na^2 \mathbf{u}| \|_{L^2} + \| |\na^3 \n| |\na \mathbf{u}| \|_{L^2} \right) \\
& \quad + C \left( \| |\na^2 \n| |\na^2 \mathbf{u}| \|_{L^2} + \| |\na \n| |\na^3 \mathbf{u}| \|_{L^2} \right) \\
& \le C \left( 1 + \| \na^2 \mathbf{u}_t \|_{L^2} + \| \na^3 \n \|_{L^2} \right).
\ea\ee
Putting (\ref{h52}) into (\ref{h51}) leads to
\be\la{h53}\ba
\frac{d}{dt} \int |\na^3 \n|^2 dx
& \le C \left( 1 + \| \na^3 \n \|_{L^2} + \| \na^2 \mathbf{u}_t \|_{L^2} \right) \| \na^3 \n \|_{L^2} \\
& \le C \| \na^3 \n \|^2_{L^2} + C \left( 1 + \| \na^2 \mathbf{u}_t \|^2_{L^2} \right),
\ea\ee
which together with (\ref{h04}) and Gr\"onwall's inequality yields
\be\la{h54}\ba
\sup_{0 \le t \le T} \| \na^3 \n \|_{L^2}^2 \le C.
\ea\ee
This, combined with (\ref{h52}) and (\ref{h04}), shows that
\be\la{h55}\ba
\int_0^T \| \na^4 \mathbf{u} \|_{L^2}^2 dt \le C.
\ea\ee
Finally, by direct calculation from $(\ref{ns})_1$, we obtain
\be\la{h56}\ba
|\na^2 \n_t| \le C \left( |\na^2 \mathbf{u}| |\na \n| + |\na \mathbf{u}| |\na^2 \n| + |\mathbf{u}| |\na^3 \n| + \n |\na^3 \mathbf{u}| \right).
\ea\ee
By virtue of (\ref{h54}), (\ref{h01}), (\ref{h02}), (\ref{h03}), and (\ref{h04}), it holds that
\be\la{h57}\ba
\| \na^2 \n_t \|_{L^2}
& \le C \left( \| \na^2 \mathbf{u} \|_{L^4} \| \na \n \|_{L^4}
+ \| \na \mathbf{u} \|_{L^4} \| \na^2 \n \|_{L^4} + \| \mathbf{u} \|_{L^\infty} \| \na^3 \n \|_{L^2} + \| \na^3 \mathbf{u} \|_{L^2} \right) \\
& \le C.
\ea\ee
The combination of (\ref{h54}), (\ref{h55}), and (\ref{h57}) gives (\ref{h05}) and completes the proof of Lemma \ref{hl5}.
\end{proof}

\begin{lemma}\la{hl6}
There exists a positive constant $C$ depending only on $T$, $\alpha$, $\tilde{\n}$, $\ga$, $\underline{\n_0}$, $\| \n_0 - \tilde{\n} \|_{H^3}$, and $\| \mathbf{u}_0 \|_{H^3}$ such that
\be\la{h06}\ba
\sup_{0 \le t \le T} t \left( \| \mathbf{u}_{tt} \|_{L^2}^2 + \| \na^2 \mathbf{u}_t \|_{L^2}^2 + \| \na^4 \mathbf{u} \|^2_{L^2} \right)
+ \int_0^T t \| \na \mathbf{u}_{tt} \|_{L^2}^2 dt \le C.
\ea\ee
\end{lemma}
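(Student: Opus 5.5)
We obtain \eqref{h06} from a time-weighted version of the $\mathbf{u}_{tt}$ energy estimate. Differentiate \eqref{h41} once more in time, equivalently apply $\p_t$ to \eqref{h21}, to get an equation for $\mathbf{u}_{tt}$ of the form
\be\nonumber
\n \mathbf{u}_{ttt} + \n \mathbf{u}\cdot\na \mathbf{u}_{tt} - \div(\n^\alpha \na \mathbf{u}_{tt}) - (\alpha-1)\na(\n^\alpha \div \mathbf{u}_{tt}) = \mathcal{R},
\ee
where $\mathcal{R}$ collects the commutator terms. Each term of $\mathcal{R}$ contains $\n_t$, $\n_{tt}$, $(\n^\alpha)_t$, $(\n^\alpha)_{tt}$, or $P_{tt}$, multiplied by at most second derivatives of $\mathbf{u}$, $\mathbf{u}_t$, or $\mathbf{u}_{tt}$. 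Multiply by $\mathbf{u}_{tt}$ and integrate over $\mathbb{R}^N$, using $(\ref{ns})_1$ to handle $\n\mathbf{u}_{ttt}\cdot\mathbf{u}_{tt}$. This gives
\be\nonumber
\frac12\frac{d}{dt}\int \n|\mathbf{u}_{tt}|^2dx + N_\alpha\int \n^\alpha|\na \mathbf{u}_{tt}|^2 dx \le \int \mathcal{R}\cdot \mathbf{u}_{tt}\,dx + \text{(transport terms)},
\ee
where coercivity comes from \eqref{h112} and the lower bound on $\n$ from \eqref{h11}. Then multiply by $t$, which produces the extra term $\frac12\int\n|\mathbf{u}_{tt}|^2dx$ on the right. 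This term is integrable in time by \eqref{h04}, so the weight $t$ removes any need for $\mathbf{u}_{tt}(0)$.

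To control $\int\mathcal{R}\cdot\mathbf{u}_{tt}$, integrate the divergence-form terms by parts so that $\na\mathbf{u}_{tt}$ appears. Bound the coefficients using the already established estimates \eqref{h01}--\eqref{h05}: $\n_t\in L^\infty(0,T;H^2)$ by \eqref{h05}, so $\n_t,\na\n_t\in L^\infty L^4$ and $\n_t\in L^\infty L^\infty$; $\n_{tt}\in L^\infty L^2$ by \eqref{h04}; and $\na\mathbf{u}\in L^\infty L^\infty$ from the $H^3$ bound. The most delicate term is $\int (\n^\alpha)_{tt}\na\mathbf{u}\cdot\na\mathbf{u}_{tt}$. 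Estimate it by $\|\n_{tt}\|_{L^2}\|\na\mathbf{u}\|_{L^\infty}\|\na\mathbf{u}_{tt}\|_{L^2}$ together with a $\|\n_t\|_{L^4}^2$ term. The analogous $(\n^\alpha)_t\na\mathbf{u}_t\cdot\na\mathbf{u}_{tt}$ term is bounded using $\|\na\mathbf{u}_t\|_{L^2}\le C$. The terms $\n_{tt}\mathbf{u}_t\cdot\mathbf{u}_{tt}$ and $\n_{tt}\mathbf{u}\cdot\na\mathbf{u}\cdot\mathbf{u}_{tt}$ need $\mathbf{u}_{tt}\in L^4$. Handle them with \eqref{gn11}, $\|\mathbf{u}_{tt}\|_{L^4}\le C\|\mathbf{u}_{tt}\|_{L^2}^{1-N/4}\|\na\mathbf{u}_{tt}\|_{L^2}^{N/4}$, followed by Young's inequality. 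The term $P_{tt}\,\div\mathbf{u}_{tt}$ is bounded by $\|\n_{tt}\|_{L^2}+\|\n_t\|_{L^4}^2$. Altogether we expect
\be\nonumber
\frac{d}{dt}\Big(t\int\n|\mathbf{u}_{tt}|^2dx\Big)+t\|\na\mathbf{u}_{tt}\|_{L^2}^2\le C\big(1+\|\mathbf{u}_{tt}\|_{L^2}^2+\|\na^2\mathbf{u}_t\|_{L^2}^2\big)\Big(1+t\int\n|\mathbf{u}_{tt}|^2dx\Big).
\ee
By \eqref{h04} the bracket is integrable in time, so Gr\"onwall's inequality gives the bounds on $t\|\mathbf{u}_{tt}\|_{L^2}^2$ and $\int_0^Tt\|\na\mathbf{u}_{tt}\|_{L^2}^2dt$.

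The remaining bounds follow from elliptic estimates. By \eqref{h44}, $\sqrt t\|\na^2\mathbf{u}_t\|_{L^2}\le C(1+\sqrt t\|\mathbf{u}_{tt}\|_{L^2})\le C$. Then by \eqref{h52}, $\sqrt t\|\na^4\mathbf{u}\|_{L^2}\le C(1+\sqrt t\|\na^2\mathbf{u}_t\|_{L^2}+\|\na^3\n\|_{L^2})\le C$, using \eqref{h05}.

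The main obstacle is to make the computation rigorous. The solution from Lemma \ref{lct} may not have $\mathbf{u}_{ttt}$ in a classical sense, so the energy identity should be justified either with difference quotients in $t$ or by working on the smooth approximate solutions used for local existence. In the bookkeeping itself, the term carrying $\n_{tt}$ is the critical one, since $\n_{tt}$ is only in $L^\infty L^2$. It must always be paired with $L^\infty$ or $L^4$ norms of the velocity factors, and keeping the total power of $\|\na\mathbf{u}_{tt}\|_{L^2}$ strictly below $2$ is what allows absorption into the dissipation term.
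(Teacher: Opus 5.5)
Your proposal is correct and follows the paper's proof essentially step for step. Both arguments differentiate the momentum equation twice in time to obtain (\ref{h61}), test with $\mathbf{u}_{tt}$ using the continuity equation and the coercivity (\ref{h112}), and bound the remaining terms with (\ref{h01})--(\ref{h05}). Both then multiply by $t$ and close using $\int_0^T\|\mathbf{u}_{tt}\|_{L^2}^2\,dt\le C$ from (\ref{h04}), and finally recover $\sqrt{t}\,\|\nabla^2\mathbf{u}_t\|_{L^2}$ and $\sqrt{t}\,\|\nabla^4\mathbf{u}\|_{L^2}$ from (\ref{h44}), (\ref{h52}) and (\ref{h05}).

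The differences are only in bookkeeping. You estimate the $\rho_{tt}$ terms by putting $\mathbf{u}_{tt}$ in $L^4$ via Gagliardo--Nirenberg, whereas the paper pairs $\rho_{tt}$ with $\|\mathbf{u}_t\|_{L^\infty}\le C(1+\|\mathbf{u}_{tt}\|_{L^2})$. Your remark about justifying the $\mathbf{u}_{ttt}$ computation addresses a point the paper leaves implicit.
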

\begin{proof}
First, differentiating $(\ref{ns})_2$ with respect to $t$ yields
\be\la{h61}\ba
& \n \mathbf{u}_{ttt} + \n \mathbf{u} \cdot \na \mathbf{u}_{tt}
- \div(\n^\alpha \na \mathbf{u}_{tt})
- (\alpha-1) \na(\n^\alpha \div \mathbf{u}_{tt}) \\
& = - \n_{tt} \mathbf{u}_t - 2 \n_t \mathbf{u}_{tt}
-(\n_{tt} \mathbf{u} + 2\n_t \mathbf{u}_t + \n \mathbf{u}_{tt}) \cdot \na \mathbf{u}
- 2 (\n \mathbf{u}_t + \n_t \mathbf{u}) \cdot \na \mathbf{u}_t + \div((\n^\alpha)_{tt} \na \mathbf{u}) \\
& \quad + 2 \div((\n^\alpha)_t \na \mathbf{u}_t)
+ (\alpha-1) \na((\n^\alpha)_{tt} \div \mathbf{u}) + 2 (\alpha-1) \na((\n^\alpha)_t \div \mathbf{u}_t)
- \na P_{tt}.
\ea\ee
Multiplying (\ref{h61}) by $\mathbf{u}_{tt}$, integrating by parts over $\mathbb{R}^N$, and using (\ref{h11}), (\ref{h01}), (\ref{h02}), (\ref{h03}), (\ref{h04}), (\ref{h05}), (\ref{h44}), (\ref{h112}), and Young's inequality, we derive
\be\nonumber\ba
& \frac{1}{2} \frac{d}{dt} \int \n |\mathbf{u}_{tt}|^2 dx
+ \int \n^\alpha ( |\na \mathbf{u}_{tt}|^2 + (\alpha-1) (\div \mathbf{u}_{tt})^2 ) dx \\
& \le C \int \left( |\n_{tt}| |\mathbf{u}_t| + |\n_t| |\mathbf{u}_{tt}| + |\n_{tt}| |\mathbf{u}| |\na \mathbf{u}| + |\n_t| |\mathbf{u}_t| |\na \mathbf{u}| + |\mathbf{u}_{tt}| |\na \mathbf{u}| + |\mathbf{u}_t| |\na \mathbf{u}_t| \right) |\mathbf{u}_{tt}| dx \\
& \quad + C \int |\n_t| |\mathbf{u}| |\na \mathbf{u}_{t}| |\mathbf{u}_{tt}| dx
+ C \int \left( |\n_{tt}| |\na \mathbf{u}| + |\n_t|^2 |\na \mathbf{u}| + |\n_t| |\na \mathbf{u}_{t}| \right) |\na \mathbf{u}_{tt}| dx \\
& \le C \left( \| \n_{tt} \|_{L^2} \| \mathbf{u}_t \|_{L^\infty}
+ \| \n_t \|_{L^\infty} \| \mathbf{u}_{tt} \|_{L^2}
+ \| \n_{tt} \|_{L^2} \| \mathbf{u} \|_{L^\infty} \| \na \mathbf{u} \|_{L^\infty} \right) \| \mathbf{u}_{tt} \|_{L^2} \\
& \quad + C \left( \| \n_t \|_{L^4} \| \mathbf{u}_t \|_{L^4} \| \na \mathbf{u} \|_{L^\infty}
+ \| \mathbf{u}_{tt} \|_{L^2} \| \na \mathbf{u} \|_{L^\infty}
+ \| \mathbf{u}_t \|_{L^\infty} \| \na \mathbf{u}_t \|_{L^2} \right) \| \mathbf{u}_{tt} \|_{L^2} \\
& \quad + C \| \n_t \|_{L^\infty} \| \mathbf{u} \|_{L^\infty} \| \na \mathbf{u}_t \|_{L^2} \| \mathbf{u}_{tt} \|_{L^2}
+ C \| \n_{tt} \|_{L^2} \| \na \mathbf{u} \|_{L^\infty} \| \na \mathbf{u}_{tt} \|_{L^2} \\
& \quad + C \left( \| \n_t \|_{L^4}^2 \| \na \mathbf{u} \|_{L^\infty}
+ \| \n_t \|_{L^\infty} \| \na \mathbf{u}_t \|_{L^2} \right) \| \na \mathbf{u}_{tt} \|_{L^2} \\
& \le C + C \| \mathbf{u}_t \|_{L^\infty} \| \mathbf{u}_{tt} \|_{L^2}
+ C \| \mathbf{u}_{tt} \|_{L^2}^2 + C \| \na \mathbf{u}_{tt} \|_{L^2} \\
& \le C + C \left( 1 + \| \mathbf{u}_{tt} \|_{L^2} \right) \| \mathbf{u}_{tt} \|_{L^2}
+ C \| \mathbf{u}_{tt} \|_{L^2}^2
+ \frac{1}{2} \int \n^\alpha ( |\na \mathbf{u}_{tt}|^2 + (\alpha-1) (\div \mathbf{u}_{tt})^2 ) dx \\
& \le C \left( 1 + \| \mathbf{u}_{tt} \|_{L^2}^2 \right)
+ \frac{1}{2} \int \n^\alpha ( |\na \mathbf{u}_{tt}|^2 + (\alpha-1) (\div \mathbf{u}_{tt})^2 ) dx,
\ea\ee
which yields
\be\la{h62}\ba
\frac{d}{dt} \int \n |\mathbf{u}_{tt}|^2 dx + \int \n^\alpha ( |\na \mathbf{u}_{tt}|^2 + (\alpha-1) (\div \mathbf{u}_{tt})^2 ) dx
\le C \left( 1 + \| \mathbf{u}_{tt} \|_{L^2}^2 \right).
\ea\ee
Multiplying (\ref{h62}) by $t$ leads to
\be\la{h63}\ba
\frac{d}{dt} \left( t \int \n |\mathbf{u}_{tt}|^2 dx \right)
+ t \int \n^\alpha ( |\na \mathbf{u}_{tt}|^2 + (\alpha-1) (\div \mathbf{u}_{tt})^2 ) dx
\le C \left( 1 + \| \mathbf{u}_{tt} \|_{L^2}^2 \right),
\ea\ee
which together with (\ref{h04}) and Gr\"onwall's inequality implies
\be\la{h64}\ba
\sup_{0 \le t \le T} t \| \mathbf{u}_{tt} \|_{L^2}^2
+ \int_0^T t \| \na \mathbf{u}_{tt} \|_{L^2}^2 dt \le C.
\ea\ee
This, combined with (\ref{h04}), (\ref{h44}), (\ref{h52}), and (\ref{h05}), gives (\ref{h06}) and completes the proof of Lemma \ref{hl6}.
\end{proof}

\section{Proofs of Theorems \ref{th1}--\ref{th2}}
In this section, using the a priori estimates established in Sections 3--5, we prove the main results of this paper in this section.

\noindent\textbf{Proof of Theorem \ref{th1}.}
By the local existence result (Lemma \ref{lct}) and Lemma \ref{dcx}, there exists a $T_*>0$ such that the system (\ref{ns})--(\ref{nxxs})
admits a unique local spherically symmetric classical solution $(\n,\mathbf{u})$ on $\mathbb{R}^N \times (0,T_*]$.
We next extend this local classical solution globally in time.

We set
\be\la{pf1}\ba	
T^* \triangleq \sup \{T \, \big|\, (\n,\mathbf{u}) \text{ is a classical solution on } \mathbb{R}^N \times (0,T] \}.
\ea\ee
For any $0<\tau<T\leq T^*$ with $T$ finite, if follows from (\ref{h01}), (\ref{h02}), (\ref{h03}), (\ref{h04}), (\ref{h05}), and (\ref{h06}) that
\be\la{pf2}\ba
\mathbf{u} \in C \left( [\tau ,T]; H^3(\mathbb{R}^N) \right),
\ea\ee
where we have used the standard embedding
\be\nonumber\ba
L^\infty(\tau,T;H^4(\mathbb{R}^N)) \cap H^1(\tau,T;H^2(\mathbb{R}^N)) \hookrightarrow
C\left( [\tau,T]; H^3(\mathbb{R}^N) \right).
\ea\ee
Furthermore, combining $(\ref{ns})_1$ with (\ref{h01}), (\ref{h02}), (\ref{h03}), (\ref{h04}), (\ref{h05}), and (\ref{h06}) and applying the standard argument in \cite[Lemma 2.3]{L1}, we obtain
\be\la{pf3}\ba
\n \in C \left([0,T];H^3(\mathbb{R}^N) \right).
\ea\ee

Finally, we claim that
\be\la{pf4}\ba
T^*=\infty.
\ea\ee
Otherwise, $T^*<\infty$.
From (\ref{pf2}) and (\ref{pf3}), we conclude that
\be\nonumber\ba
(\n,\mathbf{u})(x,T^*) = \lim_{t \to T^*}(\n,\mathbf{u})(x,t)
\ea\ee
satisfies (\ref{th13}).
Hence, we may take $(\n,\mathbf{u})(x,T^*)$ as the initial data, and Lemma \ref{lct} implies that we can extend the local classical solution beyond $T^*$.
This contradicts (\ref{pf1}), thereby proving (\ref{pf4}).
The uniqueness of $(\n,\mathbf{u})$ satisfying (\ref{th14}) follows from arguments similar to those in \cite{LPZ}.
This completes the proof of Theorem \ref{th1}.

\noindent\textbf{Proof of Theorem \ref{th212}.}
Based on Lemmas \ref{2l1}--\ref{2l4}, \ref{2l7}, and \ref{hl1}--\ref{hl6}, the proof of Theorem \ref{th212} follows an argument similar to that of Theorem \ref{th1} and is thus omitted.

\noindent\textbf{Proof of Theorem \ref{th2}.}
Following the arguments in Sections 4 and 5, we can derive the corresponding global a priori estimates for the three-dimensional bounded domain under the condition (\ref{th21}).
Since the remainder of the proof is analogous to that of Theorem \ref{th1}, it is omitted for brevity.

\bigskip

\noindent\textbf{Data availability.} No data was used for the research described in the article.

\bigskip

\noindent\textbf{Conflict of interest.} The author declares no conflict of interest.

\begin {thebibliography} {99}

\bibitem{AF}
R.~A. Adams and J.~J.~F. Fournier,
Sobolev spaces,
Elsevier/Academic Press, Amsterdam, 2003.

\bibitem{BD1} D. Bresch and B. Desjardins,
Sur un mod\`ele de Saint-Venant visqueux et sa limite quasi-g\'eostrophique,
C. R. Math. Acad. Sci. Paris {\bf 335} (2002), no.~12, 1079--1084.

\bibitem{BD2} D. Bresch and B. Desjardins,
Existence of global weak solutions for a 2D viscous shallow water equations and convergence to the quasi-geostrophic model,
Comm. Math. Phys. {\bf 238} (2003), no.~1-2, 211--223.

\bibitem{BDL} D. Bresch, B. Desjardins and C.-K. Lin, On some compressible fluid models: Korteweg, lubrication, and shallow water systems,
Comm. Partial Differential Equations {\bf 28} (2003), no.~3-4, 843--868.

\bibitem{BVY} D. Bresch, A.~F. Vasseur and C. Yu,
Global existence of entropy-weak solutions to the compressible Navier-Stokes equations with non-linear density dependent viscosities,
J. Eur. Math. Soc. (JEMS) {\bf 24} (2022), no.~5, 1791--1837.

\bibitem{CL} G.~C. Cai and J. Li,
Existence and exponential growth of global classical solutions to the compressible Navier-Stokes equations with slip boundary conditions in 3D bounded domains,
Indiana Univ. Math. J. {\bf 72} (2023), no.~6, 2491--2546.

\bibitem{CLZ} Y. Cao, H. Li and S. Zhu,
Global spherically symmetric solutions to degenerate compressible Navier-Stokes equations with large data and far field vacuum,
Calc. Var. Partial Differential Equations {\bf 63} (2024), no.~9, Paper No. 230, 46 pp.

\bibitem{CC} S. Chapman and T.~G. Cowling,
The mathematical theory of non-uniform gases: An account of the kinetic theory of viscosity, thermal conduction, and diffusion in gases,
Cambridge Univ. Press, New York, 1960.

\bibitem{CZZ} G.-Q.~G. Chen, J. Zhang and S. Zhu,
Global Regular Solutions of the Multidimensional Degenerate Compressible Navier-Stokes Equations with Large Initial Data of Spherical Symmetry, arXiv:2512.18545.

\bibitem{CCK} Y. Cho, H.~J. Choe and H. Kim,
Unique solvability of the initial boundary value problems for compressible viscous fluids,
J. Math. Pures Appl. (9) {\bf 83} (2004), no.~2, 243--275.

\bibitem{CK} Y. Cho and H. Kim,
On classical solutions of the compressible Navier-Stokes equations with nonnegative initial densities,
Manuscripta Math. {\bf 120} (2006), no.~1, 91--129.

\bibitem{CK2} H.~J. Choe and H. Kim,
Strong solutions of the Navier-Stokes equations for isentropic compressible fluids,
J. Differential Equations {\bf 190} (2003), no.~2, 504--523.

\bibitem{EL} L.~C. Evans,
Partial differential equations, second edition,
Graduate Studies in Mathematics, 19, Amer. Math. Soc., Providence, RI, 2010.

\bibitem{FLL} X. Fan, J. X. Li and J. Li,
Global existence of strong and weak solutions to 2D compressible Navier-Stokes system in bounded domains with large data and vacuum,
Arch. Ration. Mech. Anal. {\bf 245} (2022), no.~1, 239--278.

\bibitem{FNP} E. Feireisl, A. Novotn\'y{} and H. Petzeltov\'a,
On the existence of globally defined weak solutions to the Navier-Stokes equations,
J. Math. Fluid Mech. {\bf 3} (2001), no.~4, 358--392.

\bibitem{GJX} Z.~H. Guo, Q. Jiu and Z. Xin,
Spherically symmetric isentropic compressible flows with density-dependent viscosity coefficients,
SIAM J. Math. Anal. {\bf 39} (2008), no.~5, 1402--1427.

\bibitem{GLX}
Z.~H. Guo, H.~L. Li and Z. Xin,
Lagrange structure and dynamics for solutions to the spherically symmetric compressible Navier-Stokes equations,
Comm. Math. Phys. {\bf 309} (2012), no.~2, 371--412.

\bibitem{H4} D. Hoff,
Global existence for 1D, compressible, isentropic Navier-Stokes equations with large initial data,
Trans. Amer. Math. Soc. {\bf 303} (1987), no.~1, 169--181.

\bibitem{H1} D. Hoff,
Global solutions of the Navier-Stokes equations for multidimensional compressible flow with discontinuous initial data,
J. Differential Equations {\bf 120} (1995), no.~1, 215--254.

\bibitem{H2} D. Hoff,
Strong convergence to global solutions for multidimensional flows of compressible, viscous fluids with polytropic equations of state and discontinuous initial data,
Arch. Rational Mech. Anal. {\bf 132} (1995), no.~1, 1--14.

\bibitem{H3} D. Hoff,
Compressible flow in a half-space with Navier boundary conditions,
J. Math. Fluid Mech. {\bf 7} (2005), no.~3, 315--338.

\bibitem{HL2} X.-D. Huang and J. Li,
Existence and blowup behavior of global strong solutions to the two-dimensional barotrpic compressible Navier-Stokes system with vacuum and large initial data,
J. Math. Pures Appl. (9) {\bf 106} (2016), no.~1, 123--154.

\bibitem{HL3} X.-D. Huang and J. Li,
Global well-posedness of classical solutions to the Cauchy problem of two-dimensional barotropic compressible Navier-Stokes system with vacuum and large initial data,
SIAM J. Math. Anal. {\bf 54} (2022), no.~3, 3192--3214.

\bibitem{HLX2} X.-D. Huang, J. Li and Z. Xin,
Global well-posedness of classical solutions with large oscillations and vacuum to the three-dimensional isentropic compressible Navier-Stokes equations,
Comm. Pure Appl. Math. {\bf 65} (2012), no.~4, 549--585.

\bibitem{HMZ} X.-D. Huang, W. Meng and X. Zhang,
On global classical and weak solutions with arbitrary large initial data to the multi-dimensional viscous Saint-Venant system and compressible Navier-Stokes equations subject to the BD entropy condition under spherical symmetry, arXiv:2512.15029.

\bibitem{HSYY} X. Huang et al.,
Global large strong solutions to the radially symmetric compressible Navier-Stokes equations in 2D solid balls,
J. Differential Equations {\bf 396} (2024), 393--429.

\bibitem{JZ} S. Jiang and P. Zhang,
On spherically symmetric solutions of the compressible isentropic Navier-Stokes equations, Comm. Math. Phys. {\bf 215} (2001), no.~3, 559--581.

\bibitem{JWX1} Q. Jiu, Y. Wang and Z. Xin,
Global well-posedness of 2D compressible Navier-Stokes equations with large data and vacuum,
J. Math. Fluid Mech. {\bf 16} (2014), no.~3, 483--521.

\bibitem{JWX2} Q. Jiu, Y. Wang and Z. Xin,
Global classical solution to two-dimensional compressible Navier-Stokes equations with large data in $\mathbb{R}^2$,
Phys. D {\bf 376/377} (2018), 180--194.

\bibitem{Ka} Y.~I. Kanel',
A model system of equations for the one-dimensional motion of a gas, Differencial' nye Uravnenija {\bf 4} (1968), 721--734.

\bibitem{KN} S. Kawashima and T. Nishida,
Global solutions to the initial value problem for the equations of one-dimensional motion of viscous polytropic gases,
J. Math. Kyoto Univ. {\bf 21} (1981), no.~4, 825--837.

\bibitem{KS} A.~V. Kazhikhov and V.~V. Shelukhin,
Unique global solution with respect to time of initial-boundary value problems for one-dimensional equations of a viscous gas,
Prikl. Mat. Meh. {\bf 41} (1977), no.~2J. Appl. Math. Mech. {\bf 41} (1977), no.~2.

\bibitem{LPZ}
Y. Li, R.~H. Pan and S. Zhu,
On classical solutions for viscous polytropic fluids with degenerate viscosities and vacuum, Arch. Ration. Mech. Anal. {\bf 234} (2019), no.~3, 1281--1334.

\bibitem{LLL} J. Li, Z. Liang,
On local classical solutions to the Cauchy problem of the two-dimensional barotropic compressible Navier-Stokes equations with vacuum,
J. Math. Pures Appl. (9) {\bf 102} (2014), no.~4, 640--671.

\bibitem{LX1} J. Li and Z. Xin,
Global Existence of Weak Solutions to the Barotropic Compressible Navier-Stokes Flows with Degenerate Viscosities, arXiv:1504.06826.

\bibitem{LX2} J. Li and Z. Xin,
Global well-posedness and large time asymptotic behavior of classical solutions to the compressible Navier-Stokes equations with vacuum,
Ann. PDE {\bf 5} (2019), no.~1, Paper No. 7, 37 pp.

\bibitem{L1}P.L. Lions,
Mathematical Topics in Fluid Mechanics. Vol. 1: Incompressible Models,
Oxford University Press, New York, 1996.

\bibitem{L2}  P.L. Lions,
Mathematical Topics in Fluid Mechanics. Vol. 2: Compressible Models,
Oxford University Press, New York, 1998.

\bibitem{MN1} A. Matsumura, T. Nishida,
The initial value problem for the equations of motion of viscous and heat-conductive gases,
J. Math. Kyoto Univ. {\bf 20}(1) (1980), 67--104.

\bibitem{MV} A. Mellet and A.~F. Vasseur,
On the barotropic compressible Navier-Stokes equations,
Comm. Partial Differential Equations {\bf 32} (2007), no.~1-3, 431--452.

\bibitem{N} J. Nash,
Le probl\`{e}me de Cauchy pour les \'{e}quations diff\'{e}rentielles d'un fluide g\'{e}n\'{e}ral,
Bull. Soc. Math. France {\bf 90} (1962), 487--497 (French).

\bibitem{NI} L. Nirenberg,
On elliptic partial differential equations,
Ann. Scuola Norm. Sup. Pisa Cl. Sci. (3) {\bf 13} (1959), 115--162.

\bibitem{NS} A. Novotn\'y{} and I. Stra\v skraba,
Introduction to the mathematical theory of compressible flow,
Oxford Lecture Series in Mathematics and its Applications, 27, Oxford Univ. Press, Oxford, 2004.

\bibitem{SS} R. Salvi and I. Stra\v skraba,
Global existence for viscous compressible fluids and their behavior as $t\to\infty$,
J. Fac. Sci. Univ. Tokyo Sect. IA Math. {\bf 40} (1993), no.~1, 17--51.

\bibitem{S1} D. Serre,
Solutions faibles globales des \'equations de Navier-Stokes pour un fluide compressible,
C. R. Acad. Sci. Paris S\'er. I Math. {\bf 303} (1986), no.~13, 639--642.

\bibitem{S2} D. Serre,
Sur l'\'equation monodimensionnelle d'un fluide visqueux, compressible et conducteur de chaleur,
C. R. Acad. Sci. Paris S\'er. I Math. {\bf 303} (1986), no.~14, 703--706.

\bibitem{S} J. Serrin,
On the uniqueness of compressible fluid motions,
Arch. Rational Mech. Anal. {\bf 3} (1959), 271--288 (1959).

\bibitem{LXY} T.-P. Liu, Z. Xin and T. Yang,
Vacuum states for compressible flow,
Discrete Contin. Dynam. Systems {\bf 4} (1998), no.~1, 1--32.

\bibitem{VK} V.~A. Vaigant and A.~V. Kazhikhov,
On existence of global solutions to the two-dimensional Navier–Stokes equations for a compressible viscous fluid,
Sib. Math. J. 36 (6) (1995) 1283–1316.

\bibitem{VY} A.~F. Vasseur and C. Yu,
Existence of global weak solutions for 3D degenerate compressible Navier-Stokes equations,
Invent. Math. {\bf 206} (2016), no.~3, 935--974.

\bibitem{ZZ}
P. Zhang and J.~N. Zhao,
The existence of local solutions for the compressible Navier-Stokes equations with the density-dependent viscosities, Commun. Math. Sci. {\bf 12} (2014), no.~7, 1277--1302.

\bibitem{Z} X. Zhang,
Spherically symmetric strong solution of compressible flow with large data and density-dependent viscosities, J. Math. Anal. Appl. {\bf 549} (2025), no.~2, Paper No. 129488, 29 pp.

\end {thebibliography}

\end{document}